\documentclass[11pt]{amsart}
\usepackage{amsmath,amsfonts,comment,amsthm,euscript,dirtytalk,mathtools,amssymb,graphicx,mathrsfs,enumitem, cleveref, tikz, pgfplots}
\pgfplotsset{compat=1.8}
\usepackage{accents}

\usepackage{bm}
\usepackage{mathtools}
\usepackage{amsmath,enumitem}
\usepackage{amsthm}
\usepackage{amsfonts}
\usepackage{mathtools}
\usepackage{amssymb}
\usepackage{graphicx}
\graphicspath{ {./} }

\usepackage{biblatex}
\address{Department of Mathematics, Johns Hopkins University, Baltimore, MD 21218}
\email{zzhan296@jh.edu}

\newcommand{\la}{\lambda}

\newtheorem*{notation}{Notation}

\newtheorem{theorem}{Theorem}

\newtheorem{lemma}{Lemma}[section]
\newtheorem*{remark}{Remark}
\newtheorem{proposition}{Proposition}[section]

\begin{document}
\title{Strichartz and Spectral Projection Estimates on Asymptotically Conic Manifolds}
\author{Zhexing Zhang}
\date{}

\begin{abstract}
 We prove the lossless unit interval Strichartz theorem on asymptotically conic surfaces, assuming that a large enough neighborhood of its trapped set has negative curvature. 
 
 We also prove the spectral projection theorem for $6<q<\infty$ on asymptotically Euclidean surfaces with nonpositive curvature, assuming a large enough neighborhood of its trapped set has negative curvature. If the negatively curved region extends to a sufficiently far-out cutoff, we also obtain the critical and subcritical estimates, without a curvature sign assumption near infinity.
\end{abstract}
\maketitle
\section{Introduction}
Assume $(M,g)$ is an asymptotically conic manifold of the form
$M=M_0\cup M_\infty$, where $M_0$ is compact and
$M_\infty=[A,\infty)\times\partial M$. On $M_\infty$, assume that
\begin{align}\label{eq:acmetric}
g
=
dr^2+r^2h(r^{-1},\theta,d\theta)
=
dr^2+\sum_{i,j}g_{ij}(r,\theta)d\theta_i d\theta_j,
\end{align}
where
\[
h(x,\theta,d\theta)
=
\sum_{i,j}h_{ij}(x,\theta)d\theta_i d\theta_j
\]
is a smooth family of metrics on $\partial M$ up to $x=0$, and
\[
g_{ij}(r,\theta)=r^2h_{ij}(r^{-1},\theta).
\]

Let $\Delta_g$ denote the nonpositive Laplacian operator on $M$, and let
$P=\sqrt{-\Delta_g}$. Define
\[
p_g:=\det(g_{ij})^{1/2}.
\]
We define an effective potential
\begin{align*}
q(r,\theta)
:=
(2p_g)^{-2}(\partial_r p_g)^2
+(2p_g)^{-2}\sum_{i,j}
\frac{\partial p_g}{\partial\theta_i}
\frac{\partial p_g}{\partial\theta_j}g^{ij}
+2^{-1}p_g\Delta_{M_{\infty}}(p_g^{-1}).
\end{align*}
Here, we let
\begin{align}
\Delta_{M_{\infty}}
:=
(-\Delta_g)|_{M_{\infty}}
=
-\partial_r^2-\frac{\partial_rp_g}{p_g}\partial_r
+\Delta_{\partial M_\infty},
\end{align}
where
\begin{align*}
\Delta_{\partial M_\infty}
=
-p_g^{-1}\sum_{i,j}
\partial_{\theta_i}(p_g g^{ij}\partial_{\theta_j}).
\end{align*}
We now make the same assumptions as (1.2) and (1.3) of \cite{Cardoso2002UniformEO}.
Assume there exists a constant $C>0,$ such that \begin{align}\label{assumption1}
    |q(r,\theta)|<C\,\,\,,\frac{\partial q}{\partial r}<Cr^{-1-\delta_0}
\end{align}
for some $\delta_0>0$ and \begin{align}\label{assumption2}
    -\frac{\partial}{\partial r}\sum_{i,j}g^{ij}(r,\theta)\xi_i\xi_j\geq \frac{C}{r}\sum_{i,j}g^{ij}(r,\theta)\xi_i\xi_j
\end{align}
for $(\theta,\xi)\in T^*\partial M.$ Notice that $\eqref{assumption2}$ implies that on $M_\infty,$ the geodesic flow is convex in terms of $r$.

Meanwhile, notice that in 2-dimensional surfaces, assume the metric is $$dr^2+f(r,\theta)^2d\theta^2$$ on $M_\infty$ for some $f\in C^\infty([A,\infty)\times\partial M)$, then \eqref{assumption2} is equivalent to \begin{align}\label{assumption3}
    -\frac{\partial f(r,\theta)^{-2}}{\partial r}\geq \frac{C}{r}f(r,\theta)^{-2}.
\end{align}

Burq, Guillarmou and Hassell \cite{BGH} proved the Strichartz estimates on a large class of asymptotically conic manifolds with hyperbolic trapped set and satisfying the pressure condition. Recently, Huang, Sogge, Tao and the author \cite{HSTZ} proved the Strichartz estimates on the unit time interval on asymptotically hyperbolic surfaces with negative curvature and bounded geometry without assuming the pressure condition. 

By constructing an asymptotically hyperbolic background manifold of $M$, which agrees with $M$ on $M_0$, we can use the result in \cite{HSTZ} to prove the following Strichartz estimate without assuming the pressure condition. We denote the Laplace operator on $M$ associated with the metric $g$ by $\Delta_g$. 
\begin{theorem}\label{Strichartzthm}
    Assume $(M,g)$ is a 2-dimensional asymptotically conic surface in the form $M=M_0\cup M_\infty$ as above, satisfying \eqref{assumption1} and \eqref{assumption3}. Meanwhile, assume $M_0$ has negative sectional curvature. Then, we have the following Strichartz estimate.
    \begin{align}\label{eq:thmstrichart}
    \|e^{-it\Delta_g}u_0\|_{L^p_tL^q_x(M\times [0,1])}\leq C \|u_0\|_{L^2(M)},
    \end{align}
    for $(p,q)$ satisfying the Keel-Tao condition,
    \begin{align}\label{keeltao}
        2(1/2-1/q)=2/p, \, \, 
\,\,\,\,\,\text{ for } p\in (2,\infty).
    \end{align}
\end{theorem}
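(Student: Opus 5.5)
The plan is to reduce Theorem~\ref{Strichartzthm} to the unit-interval Strichartz estimates of \cite{HSTZ} by gluing. We split $u = e^{-it\Delta_g}u_0$ with a partition of unity $1=\chi_0+\chi_\infty$ on $M$. Here $\chi_0$ is supported in a compact set contained in the interior of the negatively curved region $M_0$ (enlarged slightly if needed). The function $\chi_\infty$ is supported in $\{r\ge A'\}\subset M_\infty$, where the convexity \eqref{assumption3} and the bounds \eqref{assumption1} hold.

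\textbf{Interior piece.} We build an auxiliary surface $(\tilde M,\tilde g)$ that agrees with $(M,g)$ on a neighborhood of $\operatorname{supp}\chi_0$. It is obtained by bending the conic end into a hyperbolic cusp-free funnel end, for example $\tilde g = dr^2+\tilde f(r,\theta)^2d\theta^2$ with $\tilde f=f$ for $r\le A$ and $\tilde f\sim e^{r}$ for large $r$. We choose $\tilde f$ so that the curvature $-\tilde f''/\tilde f$ stays negative and the geometry is bounded. This is possible because $M_0$ is already negatively curved and \eqref{assumption3} says $f$ is increasing in $r$, so we can interpolate monotonically with $\tilde f''>0$. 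Then \cite{HSTZ} gives the lossless unit-interval estimate for $e^{-it\Delta_{\tilde g}}$. We write $\chi_0 u$ as the solution of $(i\partial_t-\Delta_{\tilde g})(\chi_0u)=[\Delta_g,\chi_0]u$ on $\tilde M$. Duhamel together with the Christ--Kiselev lemma (valid since $p>2$) reduces the matter to estimating the commutator term $[\Delta_g,\chi_0]u$, a first-order operator supported in a compact annulus $\{A'\le r\le A''\}$.

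\textbf{End piece and the commutator.} On $\operatorname{supp}\chi_\infty$, the region is nontrapping and convex by \eqref{assumption2}, and every geodesic leaving it escapes to infinity. We therefore compare with a nontrapping asymptotically conic model $(\hat M,\hat g)$ that equals $g$ for $r\ge A'$ and is capped off nontrappingly inside. The Strichartz estimates on nontrapping asymptotically conic manifolds (Hassell--Tao--Wunsch, or the nontrapping case of \cite{BGH}) handle $\chi_\infty u$, again with a commutator error supported in the same annulus. Both error terms are controlled by the local smoothing estimate
\begin{align*}
\|\beta u\|_{L^2([0,1];H^{1/2})}\le C\|u_0\|_{L^2}, \qquad \beta\in C_c^\infty(\{A'<r<A''\}).
\end{align*}
Combined with the dual (inhomogeneous) endpoint-type smoothing estimates on $\tilde M$ and $\hat M$, which hold in the nontrapping region away from the trapped set, this turns the $H^{-1/2}$ commutator forcing into an $L^pL^q$ bound, in the standard Staffilani--Tataru/Burq way.

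\textbf{Main obstacle.} The key difficulty is the local smoothing estimate for a cutoff $\beta$ supported away from the trapped set, which lies inside $M_0$. It should follow from high-frequency resolvent bounds $\|\beta(P-\lambda\mp i0)^{-1}\beta\|_{L^2\to L^2}\le C\lambda^{-1}$ for cutoffs supported outside the trapped set. Such bounds are exactly what Cardoso--Vodev \cite{Cardoso2002UniformEO} prove under \eqref{assumption1}--\eqref{assumption2}, and they give smoothing through Kato's $H$-smoothing theory. I would invoke \cite{Cardoso2002UniformEO} directly. The remaining work is then to check that the cutoff annulus can be placed so that it sits both in the region where $\tilde g=g$ and in the region where $\hat g=g$. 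This needs the two models to overlap on a nontrivial annulus, which I would arrange by choosing $A<A'<A''$ with the interpolation for $\tilde f$ done beyond $A''$.
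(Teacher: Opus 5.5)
There is a genuine gap, at exactly the step you flag as the main obstacle. The $O(\la^{-1})$ cutoff resolvent bound of \cite{Cardoso2002UniformEO} holds only for cutoffs supported in $\{r\geq R_0\}$ with $R_0$ sufficiently large. This is how the paper uses it, in Proposition~\ref{new} and in the Remark. It is not a bound for arbitrary cutoffs supported away from $\pi(K)$.

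Your annulus $\{A'<r<A''\}$ cannot be pushed out that far. It must lie where $\tilde g=g$ and $\tilde g$ is negatively curved. Since no curvature sign is assumed on $M_\infty$, this confines it to a small neighborhood of $M_0$, so the two placement requirements you mention at the end are incompatible in general. Being disjoint from the trapped set is not enough by itself. Bicharacteristics leaving the trapped set along unstable directions pass through the annulus, so a lossless smoothing bound there must rely on an a priori bound for the cutoff resolvent near the trapped set. The same objection applies to your dual smoothing estimate on $\tilde M$. The annulus is nontrapping there, but $\tilde M$ has its own trapped set, so nontrapping of the annulus alone does not give the estimate.

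The paper closes this gap with a chain of results.
\begin{itemize}
\item Tao's spectral gap \cite{tao2026spectralgapsurfacesinfinite} on the negatively curved, even asymptotically hyperbolic background gives $\|\kappa(-\Delta_{\tilde g}-(\la+i0)^2)^{-1}\kappa\|\lesssim\la^{-1}\log\la$.
\item The gluing theorem of \cite{DVgluing} transfers this to the logarithmic bound \eqref{eq:res-log-M} on $M$. The gluing is done in the collar where $g=\tilde g$, against a capped-off exterior model with complex absorption.
\item Then \cite[Theorem 1.2]{nontrappinglocalsmoothing} (propagation through trapped sets) upgrades this to $O(\la^{-1})$ for cutoffs away from the trapped set, on both $M$ and $\tilde M$.
\item Finally \cite{spectralbookdyatlov} turns these bounds into \eqref{eq:local-sm-nontrap-M}, \eqref{eq:local-sm-nontrap} and \eqref{eq:comm-sm}.
\end{itemize}

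Once those estimates are available, the rest of your scheme matches the paper's proof: Duhamel on $\tilde M$, \cite[Theorem 1.1]{HSTZ}, dual smoothing with Christ--Kiselev, and a separate nontrapping treatment of the exterior. You handle the exterior with a capped model $\hat M$; the paper uses \cite[Theorem 3.6]{BGH}.

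Two smaller points. You should take $\tilde f=\sinh(c_1r)$ exactly, so that the background is even asymptotically hyperbolic as \cite{HSTZ} requires. Also, keeping $\tilde f''>0$ through the transition needs more than monotonicity of $f$. The paper uses a large $c_1$ together with the one-sided bound $(a'')_+\leq C_a(1-a)$.
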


Stein \cite{Stein} and Tomas \cite{Thomas} proved the spectral measure estimates for Euclidean spaces. Sogge \cite{sogge88} proved the sharp spectral projection with spectral window of unit size on compact manifolds. Huang and Sogge \cite{SoggeHuangQuasimode2025} proved the spectral projection in logarithmic scale for compact manifolds with nonpositive curvature. Then, Huang, Sogge, Tao and the author \cite{HSTZ} proved the spectral projection in logarithmic scale for manifolds with nonpositive curvature and bounded geometry, as well as the spectral projection estimates on even asymptotically hyperbolic surfaces with strictly negative curvature. 

Let $\mathbf{1}_S$ be the indicator function on the set $S.$ Let $I\subset\mathbb{R}$ be an interval. Then, $\mathbf{1}_{I}(\sqrt{-\Delta_g})$ is the spectral projection operator on the spectral window $I$. Let $S^*M$ be the cosphere bundle of $M$ and denote the principal symbol of $P$ by $p(x,\xi)$. Let $(x(t),\xi(t))=e^{tH_p}(x,\xi)$, where $e^{tH_p}$ denotes the geodesic flow on the cotangent bundle. Define \begin{align}
     \Gamma_\pm:=\{(x,\xi)\in S^*M:x(t)\not\to\infty \text{ as }t\to\mp\infty\}.
 \end{align}
We denote $$K=\Gamma_+\cap\Gamma_-.$$ Define $\pi:S^*M\to M$ with $\pi(x,\xi)=x.$ The trapped set of $M$ is defined to be $\pi(K)$.

Guillarmou, Hassell and Sikora
\cite{Guillarmou2010RestrictionAS} proved restriction estimates on
asymptotically conic manifolds in dimension $n\geq3$. Chen
\cite{xichen} later completed the abstract microlocal Stein--Tomas
argument used in the presence of a microlocal partition of unity.

For the spectral projection results below, we make the additional
assumption that $M$ has asymptotically Euclidean ends in the sense of
\cite[Section 8]{Guillarmou2010RestrictionAS}. More precisely, each
end is diffeomorphic to
\[
(r_0,\infty)\times\mathbb S^1,
\]
and the metric on each end is of the form
\begin{align}\label{eq:aemetric}
g=dr^2+r^2h(\theta,d\theta,r^{-1}),
\end{align}
where $h(\theta,d\theta,x)$ is smooth up to $x=0$.

We explain in Subsection 3.3 why the high-energy restriction argument
needed here also applies in dimension two. We may then follow
\cite{HSTZ} and \cite{Guillarmou2010RestrictionAS} to obtain the
following spectral projection estimate.
\begin{theorem}\label{specthm}
    Assume $(M,g)$ is a 2-dimensional asymptotically Euclidean surface with nonpositive curvature. Meanwhile, $M=M_0\cup M_\infty$ as above, satisfying \eqref{assumption1} and \eqref{assumption3}, and $M_0$ has negative sectional curvature. For $2<q<\infty$, denote
\begin{equation}\label{ii.2}
\mu(q)=
\begin{cases}
2(\tfrac12-\tfrac1q)-\tfrac12, & q\geq6,\\
\tfrac12(\tfrac12-\tfrac1q), & 2<q<6.
\end{cases}
\end{equation}
Then, we have the following spectral projection estimate.
    \begin{align}\label{eq:stri-nontrap}
    \|\mathbf{1}_{[\la,\la+\delta]}(\sqrt{-\Delta_g})f\|_{L^q(M)}
    \lesssim \la^{\mu(q)}\delta^{1/2}\|f\|_{L^2(M)},
    \qquad 6<q<\infty,
    \end{align}
    if $\la\gg1$ and $\delta\in(0,1]$.
\end{theorem}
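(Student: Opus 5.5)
We will split the estimate microlocally into a region near the trapped set and a region near infinity, and treat the two pieces by different methods. The reason is that the estimate is scale-invariant in $\delta$: by $TT^*$, \eqref{eq:stri-nontrap} is equivalent to
\[
\|\mathbf{1}_{[\la,\la+\delta]}(P)\|_{L^{q'}\to L^q}\lesssim \la^{2\mu(q)}\delta .
\]
Since $\mathbf{1}_{[\la,\la+\delta]}(P)$ is dominated by spectral measure integrated over a window of length $\delta$, it suffices to prove the spectral measure bound
\[
\|dE_P(\la)\|_{L^{q'}\to L^q}\lesssim \la^{2\mu(q)}, \qquad 6<q<\infty .
\]
Following \cite{Guillarmou2010RestrictionAS} and \cite{xichen}, we choose a microlocal partition of unity $\mathrm{Id}=\sum_{j=0}^N Q_j(\la)$ adapted to the flow. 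Here $Q_0$ is microlocalized near the compact region $M_0$ (in particular near $K$), and the $Q_j$ with $j\ge1$ are microlocalized in the ends, in outgoing or incoming directions. The abstract Stein--Tomas argument of \cite{xichen} then reduces the spectral measure bound to pointwise kernel estimates for $Q_i\,dE(\la)\,Q_j^*$.

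\textbf{Step 1 (near infinity).} For pieces supported in $M_\infty$ we use the high-energy parametrix of \cite{Guillarmou2010RestrictionAS} for the spectral measure on asymptotically Euclidean ends. Assumption \eqref{assumption3} gives convexity in $r$, so these regions are non-trapping. Subsection 3.3 explains why the construction works for $n=2$. The resulting kernels satisfy the dispersive bound
\[
|Q_i\,dE(\la)\,Q_j^*(x,y)|\lesssim \la^{1/2}\big(\la^{-1}+d(x,y)\big)^{-1/2},
\]
with oscillatory factor $e^{\pm i\la d(x,y)}$. The Stein--Tomas $TT^*$ interpolation then yields the Euclidean bound, with exponent $\la^{2\mu(q)}$ for $q\ge6$.

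\textbf{Step 2 (near the trapped set).} For the term $Q_0\,dE(\la)\,Q_0$ we use the method of \cite{HSTZ}. Since $M_0$ has negative curvature, we build an asymptotically hyperbolic surface $\tilde M$ that agrees with $M$ on a large neighborhood of $M_0$ and still has negative curvature there. For $\la^{-1}\lesssim\delta$ and $6<q<\infty$, \cite{HSTZ} gives on $\tilde M$ the bound
\[
\|\mathbf{1}_{[\la,\la+\delta]}(\sqrt{-\Delta_{\tilde g}})\|_{L^2\to L^q}\lesssim\la^{\mu(q)}\delta^{1/2}
\]
for all $\delta$ down to $(\log\la)^{-1}$, indeed all $\delta\le1$ because of the resolvent structure there. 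To transfer this to $M$, we write $\rho(T(\la-P))$ with $\hat\rho$ supported in $|t|\le T$, using the wave equation with finite propagation speed. For $|t|\lesssim c\log\la$, geodesics that stay in the region where $M$ and $\tilde M$ agree give identical kernels. Geodesics that leave this region escape to the convex end and never return, by \eqref{assumption3}. On $M$ their contribution is controlled by the non-trapping estimates of Step 1, and on $\tilde M$ by the corresponding estimates in \cite{HSTZ}.

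\textbf{Main obstacle.} We expect the hardest step to be the cross terms $Q_0\,dE\,Q_j^*$ together with the uniformity of the bounds for small $\delta$. The difficulty is that the Stein--Tomas argument needs $L^{q'}\to L^q$ kernel bounds for all times, not only up to the Ehrenfest time $\log\la$. For these terms we plan to use the outgoing/incoming structure: a wave packet that has left the trapped region moves monotonically in $r$. Combining this with resolvent bounds $\|\chi(P-\la\mp i0)^{-1}\chi\|\lesssim \la^{-1}\log\la$, which hold for hyperbolic trapping in negative curvature, we expect to handle the cross terms. This is also the point where the restriction $q>6$ enters. For such $q$ the loss from the trapped region can be absorbed, because the $L^q$ norm is governed by highly concentrated pieces that behave as in the compact case with window $\la^{-1}$.
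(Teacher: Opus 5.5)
There is a genuine gap: your plan has no mechanism for the trapped piece.

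Your reduction to $\|dE_P(\la)\|_{L^{q'}\to L^q}\lesssim\la^{2\mu(q)}$ is correct. But by $TT^*$ it is equivalent to the theorem, so the whole difficulty becomes a restriction estimate on a surface with trapping. The abstract theorem of \cite{xichen} needs pointwise kernel bounds for the diagonal pieces $Q_i\,dE(\la)\,Q_i^*$, including the piece $Q_0$ microlocalized near $K$. No such bounds are available under trapping; the trapping-case restriction estimate of \cite{Guillarmou2010RestrictionAS}, Proposition~\ref{new}, is only for $\chi_\infty$ supported far out.

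Your Step 2 does not supply these bounds. Comparing $M$ with $\tilde M$ by finite propagation speed controls $\rho(T(\la-P))$ only for $T\lesssim c\log\la$, that is, for windows $\delta\gtrsim(\log\la)^{-1}$. The spectral measure requires $\delta\to0$, i.e.\ arbitrarily long times, and there the two propagators genuinely differ. The phrase ``controlled by the non-trapping estimates of Step 1'' is not an estimate. Your main-obstacle paragraph also does not say how an $L^2$ resolvent bound $\la^{-1}\log\la$ becomes an $L^{q'}\to L^q$ bound.

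The paper never proves kernel bounds near $K$. It splits $1=\chi_0+\chi_c+\chi_\infty$ and reduces to $\delta\le(\log\la)^{-1}$. It treats $v=\chi_0e^{-it\Delta_g}f_\la$ as a solution of the Schr\"odinger equation on $\tilde M$ with source $[\Delta_g,\chi_0]u$. It then cuts time into intervals of length $\sim\log\la/\la$ and integrates by parts against $e^{-at}$, $a=\la/\log\la$. This rewrites the arbitrarily long time integral through $H_\la^{-1}=(\Delta_{\tilde g}+\la^2+ia)^{-1}$. The resulting terms are bounded using:
\begin{itemize}
\item the $L^2\to L^q$ bound \eqref{chi03.15}, which comes from Lemma~\ref{hyperbolicspec};
\item the two-sided bound of Proposition~\ref{keyb};
\item \emph{global-in-time} local smoothing on $M$, with logarithmic loss in \eqref{eq:local-sm-log-M} (obtained by gluing the resolvent bound of \cite{tao2026spectralgapsurfacesinfinite} to $M$) and lossless for the commutator in \eqref{eq:comm-sm}.
\end{itemize}
This resolvent/local-smoothing device is what reaches every $\delta$, and it is the missing idea.

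Second, you drop the intermediate region. \cite{Cardoso2002UniformEO} and \cite{Guillarmou2010RestrictionAS} give the exterior estimate only for cutoffs supported in $r>\varepsilon^{-1}$. Incoming pieces of $M_\infty$ closer in interact with the trapped set, so Step 1 does not cover all pieces supported in $M_\infty$. The paper handles $\chi_c=1-\chi_0-\chi_\infty$ by the same Duhamel scheme on $M$ itself. This is exactly where the hypothesis that $M$ has nonpositive curvature everywhere is used: Lemma~\ref{bgsp} gives \eqref{3.15}, and Proposition~\ref{keyeuc} is proved by lifting to the universal cover with the Hadamard parametrix and the incoming/outgoing splitting $A_\pm$. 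Your proposal never uses that hypothesis.

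Your account of the restriction $q>6$ is also off. Near $K$ the inputs (Lemma~\ref{hyperbolicspec}, Proposition~\ref{keyb}) hold for $q\ge6$; the strict inequality comes from Lemma~\ref{bgsp} and Proposition~\ref{keyeuc} on the nonpositively curved intermediate region. When negative curvature covers $\operatorname{supp}(1-\chi_\infty)$, there is no $\chi_c$ term, and the paper obtains $q=6$ and $2<q<6$ as well (Theorem~\ref{specthm-full}).
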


Notice that the powers of $\la$ and $\delta$ in Theorem~\ref{specthm} agree with the Stein--Tomas restriction estimate in the supercritical range, \cite{Stein} and \cite{Thomas}.

If the negative curvature region covers the support of $1-\chi_\infty$, we can take $\chi_0=1-\chi_\infty$ and there is no $\chi_c$ term. In this case, the endpoint and subcritical estimates also hold, and we do not need nonpositive curvature on the whole surface.
\begin{theorem}\label{specthm-full}
Assume $(M,g)$ is a 2-dimensional asymptotically Euclidean surface of the form $M=M_0\cup M_\infty$ as above, satisfying \eqref{assumption1} and \eqref{assumption3}. Choose a real cutoff $\chi_\infty$ as in Proposition~\ref{new}, depending only on $r$ and nondecreasing on each end, with $0\leq\chi_\infty\leq1$, $\chi_\infty=0$ near $M_0$, and $\chi_\infty=1$ near infinity. Assume $M$ has negative sectional curvature on a neighborhood of $\operatorname{supp}(1-\chi_\infty)$. Then, with $\mu(q)$ as in \eqref{ii.2}, we have
\begin{equation}\label{eq:spec-full-range}
\|\mathbf{1}_{[\la,\la+\delta]}(P)f\|_{L^q(M)}
\lesssim
\begin{cases}
\la^{\mu(q)}\delta^{1/2}\|f\|_{L^2(M)}, & 6\leq q<\infty,\\
(\la\delta)^{\mu(q)}\|f\|_{L^2(M)}, & 2<q<6,
\end{cases}
\end{equation}
for $\la\gg1$ and $0<\delta\leq1$. No curvature sign is assumed outside this neighborhood.
\end{theorem}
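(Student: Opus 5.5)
We split $f$ with the cutoff $\chi_\infty$ of Proposition~\ref{new}: write $1=\chi_0+\chi_\infty$ with $\chi_0=1-\chi_\infty$, and estimate $\chi_\infty\mathbf{1}_{[\la,\la+\delta]}(P)f$ and $\chi_0\mathbf{1}_{[\la,\la+\delta]}(P)f$ separately. By the usual $TT^*$ reduction it suffices to prove both bounds for $\delta$ in the logarithmic or unit scale and then pass to $\delta\le1$. For $\delta<1$ we bound $\mathbf{1}_{[\la,\la+\delta]}(P)$ through the spectral measure $dE_P(\la)$. The Stein--Tomas type statement
\[
\|\chi\, dE_P(\la)\,\chi\|_{L^{q'}\to L^q}\lesssim \la^{2\mu(q)}
\]
for $q\ge6$ gives the $\delta^{1/2}$ gain after integrating over the window. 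For $2<q<6$ we interpolate with the trivial $L^2$ bound and use the endpoint $q=6$ at scale $\la\delta$, following the scheme of \cite{HSTZ} for windows of width $\delta$.

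\textbf{Step 1: the end piece.} On $\operatorname{supp}\chi_\infty$ the flow is nontrapping and convex by \eqref{assumption3}, and the ends are asymptotically Euclidean. Proposition~\ref{new}, combined with the high-energy restriction argument of \cite{Guillarmou2010RestrictionAS} (valid in dimension two by Subsection 3.3) and Chen's microlocal Stein--Tomas argument \cite{xichen}, gives the outgoing/incoming parametrix estimates. We use them to obtain
\[
\|\chi_\infty\, dE_P(\la)\,\chi_\infty\|_{L^{q'}\to L^q}\lesssim\la^{2\mu(q)}
\]
for all $2\le q'\le 6/5$. The cross terms $\chi_\infty\, dE\,\chi_0$ are handled by the same microlocal partition: microlocally near outgoing or incoming sets, the spectral measure factors through the resolvent with nontrapping estimates.

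\textbf{Step 2: the interior piece.} Here the argument relies on negative curvature on a neighborhood of $\operatorname{supp}\chi_0$. As in the proof of Theorem~\ref{Strichartzthm}, we build an asymptotically hyperbolic surface $\tilde M$ that agrees with $M$ near $\operatorname{supp}\chi_0$ and has strictly negative curvature everywhere. Finite propagation speed lets us write
\[
\chi_0\rho(T(\la-P))
\]
with $T\approx\log\la$ as the same operator on $\tilde M$ modulo pieces that have left the region. We apply the log-scale spectral projection bounds of \cite{HSTZ} on $\tilde M$ to control $\chi_0\mathbf{1}_{[\la,\la+1/\log\la]}(P)$. We then control the remaining part using the local smoothing / resolvent bound $\|\chi_0(P-\la-i\epsilon)^{-1}\chi_0\|\lesssim \log\la/\la$, which holds because the trapped set is hyperbolic in negative curvature. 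Since there is no $\chi_c$ region, no global nonpositive curvature is needed; this is what yields the critical and subcritical ranges.

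\textbf{Main obstacle.} The hardest step is gluing the two pieces for small $\delta$ in the subcritical range $2<q<6$. There the bound $(\la\delta)^{\mu(q)}$ requires quasimode estimates of width $\delta$ rather than spectral measure bounds. We would first prove the $q=6$ bound
\[
\|\mathbf 1_{[\la,\la+\delta]}(P)\|_{L^2\to L^6}\lesssim(\la\delta)^{1/6}
\]
by writing $\mathbf 1_{[\la,\la+\delta]}(P)f=\chi_0(\cdot)+\chi_\infty(\cdot)$. The interior term is treated as a $\delta$-quasimode on $\tilde M$, with an error commutator $[P,\chi_0]$ supported where $\chi_\infty$'s nontrapping estimates apply. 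The subcritical range then follows by interpolation with $L^2$.
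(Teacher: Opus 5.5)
Your outer structure matches the paper. You take $\chi_0=1-\chi_\infty$, and you handle $\chi_\infty$ with Proposition~\ref{new} and $TT^*$; no cross terms $\chi_\infty\,dE_P\,\chi_0$ arise, since the two outputs are estimated separately. You use a negatively curved background $\tilde M$ agreeing with $g$ near $\operatorname{supp}\chi_0$, and you interpolate with $L^2$.

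\textbf{The gap: the interior piece below the logarithmic scale.} This is the real content of the theorem. The log-scale bound on $\tilde M$ controls only windows of width $(\log\la)^{-1}$. Your opening reduction runs the wrong way: orthogonality passes from short windows to long ones, not back. A bound on $\chi_0\,dE_P(\la)\,\chi_0$ is not available either, since via $TT^*$ it is equivalent to the interior estimate you are trying to prove. Finite propagation speed identifies $M$ and $\tilde M$ only for times $O(1)$, not $\log\la$, so $\chi_0\rho(T(\la-P))$ cannot simply be transplanted to $\tilde M$. Instead one must put $\chi_0u$, with $u=\mathbf 1_{[\la,\la+\delta]}(P)f$, on $\tilde M$ with the source $[\Delta_g,\chi_0]u$.

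That source is where your sketch breaks. Even with nontrapping local smoothing, $\|[\Delta_g,\chi_0]u\|_2\lesssim\la\delta^{1/2}\|f\|_2$, and in general no better. So $\chi_0u$ is not a $\delta$-quasimode on $\tilde M$, and feeding this into the log-scale resolvent bound \eqref{chi03.15} costs a factor $(\log\la)^{1/2}$. The $L^2$ bound $\la^{-1}\log\la$ you cite only gives $\|\chi_0u\|_2\lesssim(\delta\log\la)^{1/2}\|f\|_2$; it does not remove this loss.

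\textbf{The missing ingredient.} What is missing is the lossless two-sided estimate of Proposition~\ref{keyb}: $\|\chi_1H^{-1}\tilde\chi_1\|_{L^2\to L^q}\lesssim\la^{\mu(q)-1}$, for $H=\Delta_{\tilde g}+\la^2+i\la/\log\la$ and $\tilde\chi_1$ supported away from the trapped set. It is used together with Lemma~\ref{hyperbolicspec}. Both hold at $q=6$ because $\tilde M$ is negatively curved. This is exactly why the $\chi_c$ region must be absent: it is handled only by Lemma~\ref{bgsp} and Proposition~\ref{keyeuc}, which require $q>6$. With this estimate, the identity
\[
\chi_0u=\chi_1H^{-1}\chi_0(\Delta_g+\la^2)u+\chi_1H^{-1}\tilde\chi_1[\Delta_g,\chi_0]u+\tfrac{i\la}{\log\la}\,\chi_1H^{-1}\chi_0u
\]
already gives $\la^{\mu(q)}\delta^{1/2}\|f\|_2$ for $\delta\leq(\log\la)^{-1}$ and $6\leq q<\infty$. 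The three terms use, respectively, \eqref{chi03.15} with $\|(\Delta_g+\la^2)u\|_2\leq3\la\delta\|f\|_2$; Proposition~\ref{keyb} with the commutator bound; and \eqref{chi03.15} with the bound on $\|\chi_0u\|_2$. The paper reaches the same conclusion with these inputs through the time-localized Duhamel argument of Subsection 3.1.

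\textbf{The subcritical range.} Your ``main obstacle'' is misplaced. Once the $q=6$ bound $\la^{1/6}\delta^{1/2}$ holds, the range $2<q<6$ is a one-line interpolation with $\|\mathbf 1_{[\la,\la+\delta]}(P)\|_{2\to2}\leq1$, giving $\la^{\mu(q)}\delta^{3\mu(q)}\leq(\la\delta)^{\mu(q)}$. Your proposed $q=6$ target $(\la\delta)^{1/6}$ is weaker than what the theorem asserts at $q=6$. The $\delta$-quasimode route to it also fails, for the commutator reason above.
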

The cutoff in Theorem~\ref{specthm-full} must be chosen sufficiently far out for Proposition~\ref{new}. The negative curvature assumption includes the transition region where $0<\chi_\infty<1$.

In Section 2, we prove the Strichartz theorem on asymptotically conic surfaces. In Section 3, we prove the spectral projection theorems, Theorem~\ref{specthm} and Theorem~\ref{specthm-full}.

\begin{notation}
    For any nonnegative quantity $A$ and $B$ depending on $\lambda\geq 1$ and $\delta\leq 1,$ $A\lesssim B$ and $A=O(B)$ both mean $A\leq cB$ for some constant $c>0$ independent of $\lambda$ and $\delta$. We use $A \sim B$ to denote $A\lesssim B$ and $B\lesssim A.$
\end{notation}

\section*{Acknowledgment} 
The author sincerely thanks Xiaoqi Huang, Connor Quinn and Christopher Sogge for their helpful discussion and comments.

\section{Strichartz estimates}
\subsection{Construction of background manifold}

Since curvature is continuous, after a fixed rescaling of the metric, we may assume $M$ has negative sectional curvature on $\{(r,\theta)\in [A,A+1]\times\partial M\}\subset M_\infty$ as well. We construct an even asymptotically hyperbolic manifold, $(\tilde{M},\tilde g)$, as our background manifold. Let $\tilde M=\tilde M_0\cup \tilde M_\infty$, where the metric on $\tilde M_0$ agrees with the metric on $M_0$.

Recall that the metric on $M_\infty$ is $g=dr^2+f(r,\theta)^2d\theta^2$, we define the function \begin{align*}
    \tilde f(r,\theta)=(1-a(r))  \sinh (  c_1r)+a(r)f(r,\theta)
\end{align*}
when $r\in[A,\infty)$, where $c_1>0$ is a constant that will be chosen later. Meanwhile, $a\in C^\infty([A,\infty))$, $a(r)\equiv1$ when $r\leq A+1/2$, and $a(r)\equiv 0$ when $r>A+1$. 
We choose $0\leq a\leq1$, $a'\leq0$, and
\[
(a'')_+\leq C_a(1-a).
\]
This can be arranged by choosing $a$ concave near the part where it leaves $1$, and allowing $a''>0$ only where $1-a$ is bounded below.
Then, we let $\tilde M_\infty=[A,\infty)\times\partial M$ and
\begin{align*}
    \tilde g=dr^2+\tilde f(r,\theta)^2d\theta^2
\end{align*} on $\tilde M_\infty.$

We also define $\chi\in C^\infty_0(M)$ to be a compact cutoff function of $M_0$, such that $\chi\equiv1$ on $M_0$, and $\chi\equiv0$ on $\{(r,\theta)\in [A+1/4,\infty)\times\partial M\}\subset M_\infty.$ Since $\chi$ is supported on the region where $M$ and $\tilde M$ agree, we may also identify $\chi$ as a function on $\tilde M$.

In the rest of this subsection, we will construct $\tilde M$, such that it has negative Gaussian curvature, and $\textnormal{supp}([\chi,\Delta_{\tilde g}])$ is nontrapping in $\tilde M$.
\begin{figure}[h!]
    \centering
    \includegraphics[width=0.8\linewidth]{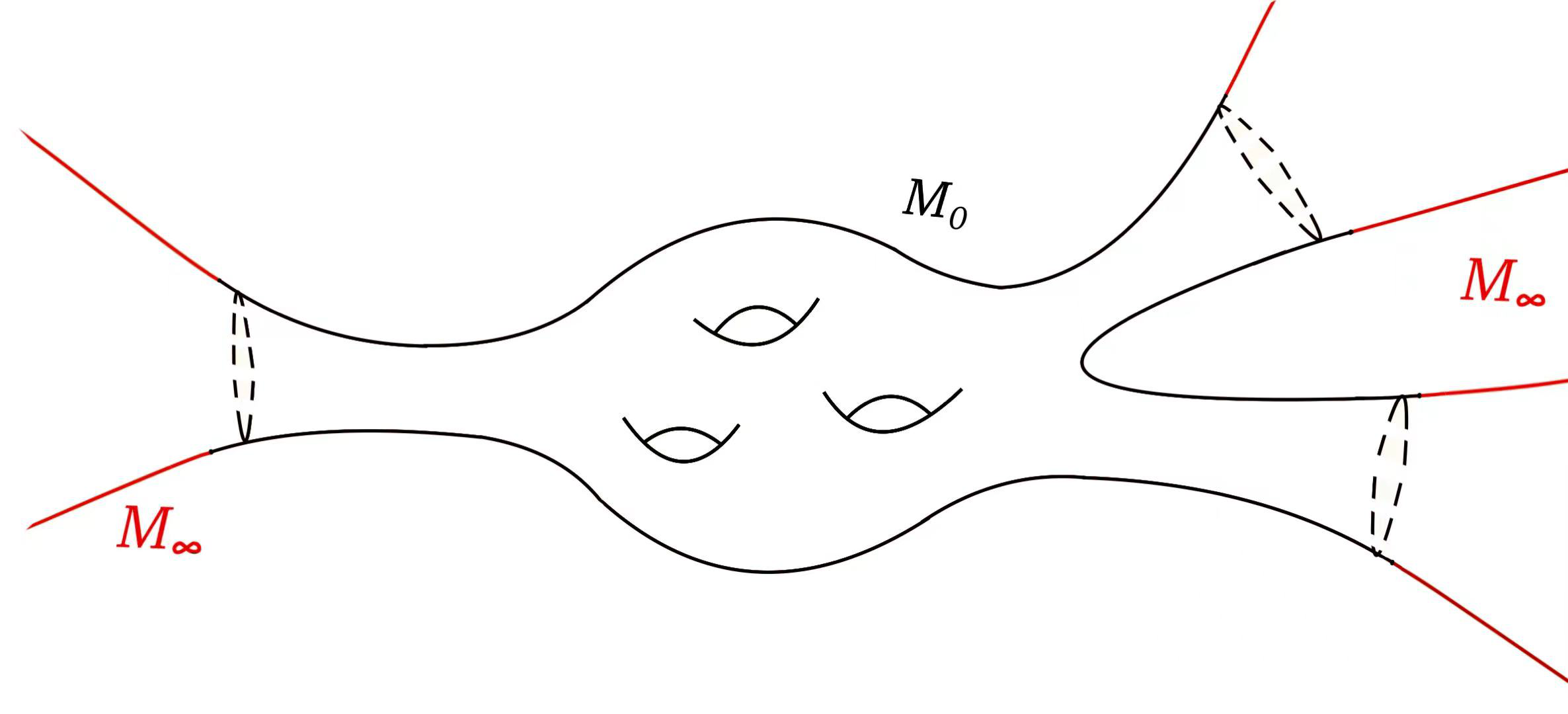}
    \caption{Asymptotically conic
 surface $M$}
    \label{fig:back}
\end{figure}

\begin{figure}[h!]
    \centering
    \includegraphics[width=0.8\linewidth]{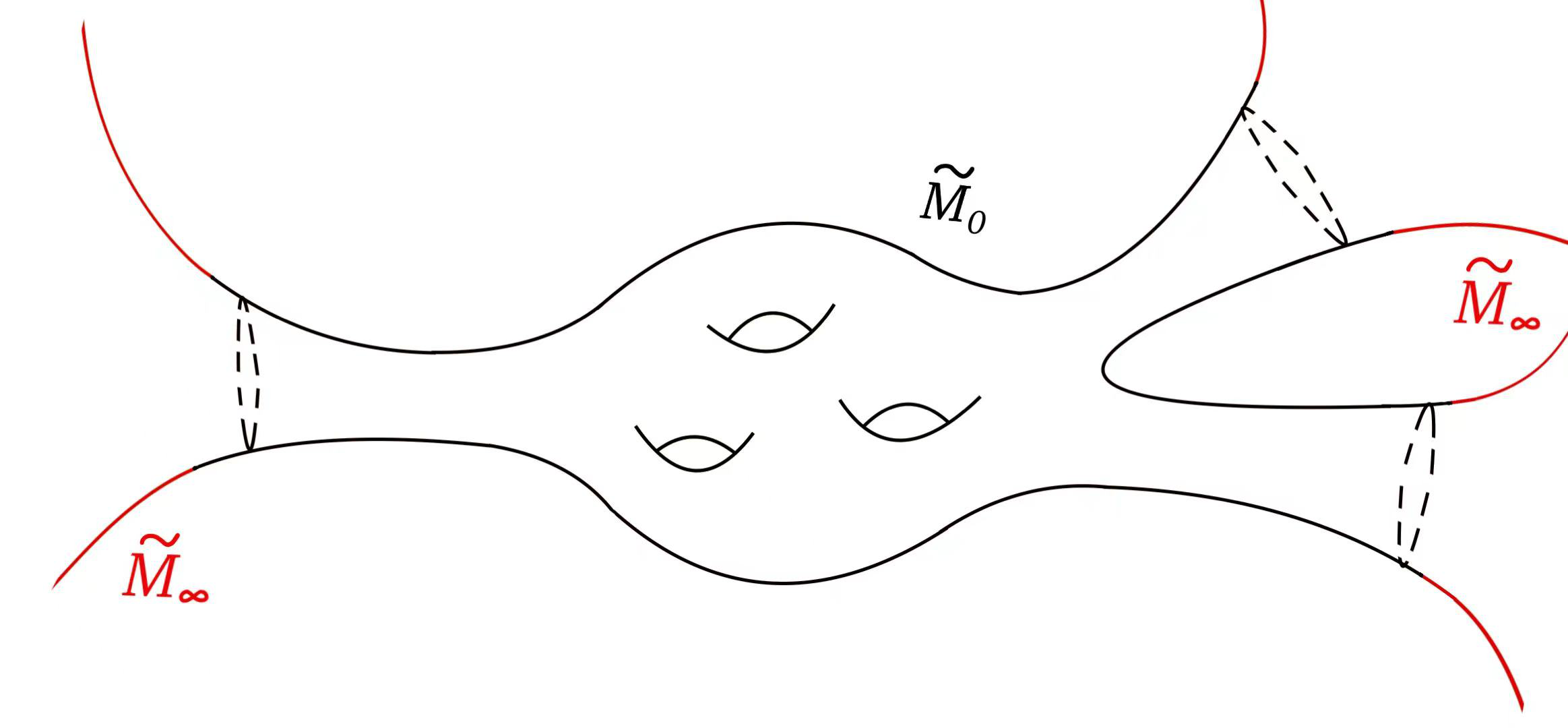}
    \caption{Asymptotically hyperbolic background surface $\tilde M$}
    \label{fig:M}
\end{figure}
\subsubsection{$\tilde M$ is even asymptotically hyperbolic with negative curvature}
Now, we compute the Gaussian curvature of $\tilde M$.
Recall that for a 2-dimensional manifold with metric \begin{align*}
    dr^2+h(r,\theta)^2d\theta^2,
\end{align*}
its Gaussian curvature is \begin{align*}
    -\frac{\partial_{rr}h}{h}.
\end{align*}
We assumed $M$ has negative curvature on $[A,A+1]$, then \begin{align*}
    \frac{\partial_{rr}f(r,\theta)}{f(r,\theta)}>0
\end{align*}on $[A,A+1]$.
We want to show \begin{align*}
    \frac{\partial_{rr}\tilde f(r,\theta)}{\tilde f(r,\theta)}=\frac{\partial_{rr}((1-a(r))  \sinh (  c_1r)+a(r)f(r,\theta))}{(1-a(r))  \sinh (  c_1r)+a(r)f(r,\theta)}>0.
\end{align*}
Note that \begin{align*}
    \partial_{rr}((1-a(r))&  \sinh (  c_1r)+a(r)f(r,\theta))\\
    =&a(r)\partial_{rr}f(r,\theta)+(1-a(r))    c_1^2\sinh(  c_1r)\\
    &+2a'(r)(\partial_rf(r,\theta)-    c_1\cosh(  c_1r))\\
    &+a''(r)(f(r,\theta)-  \sinh(  c_1r)).
\end{align*}
We first fix $a$, and then choose $c_1\gg1$, such that
$$\partial_rf(r,\theta)-c_1\cosh(c_1r)\leq0,$$
$$f(r,\theta)-\sinh(c_1r)\leq0$$
on $[A,A+1]$, and $c_1^2>C_a$. The term involving $a'$ is nonnegative. The term involving $a''$ is also nonnegative when $a''\leq0$; otherwise, it is bounded below by $-C_a(1-a)\sinh(c_1r)$. Thus, on $[A,A+1]$,
\[
\partial_{rr}\tilde f(r,\theta)
\geq a(r)\partial_{rr}f(r,\theta)
+(1-a(r))(c_1^2-C_a)\sinh(c_1r)>0.
\]
Outside the transition region, the metric either agrees with $g$ or has $\tilde f=\sinh(c_1r)$. Hence, $\tilde M$ has strictly negative curvature. The choices of $a$ and $c_1$ are fixed independently of $\la$ and $\delta$.
 
\subsubsection{$\textnormal{supp}([\chi,\Delta_{\tilde g}])$ is nontrapping in $\tilde M$} Now, we aim to show that for any geodesic in $\tilde M$ passing through $\textnormal{supp}([\chi,\Delta_{\tilde g}])$, at least one direction of the geodesic flow will diverge to infinity in physical space.

Let a geodesic on $M$ be $\gamma$. Let $\tilde\gamma$ be the geodesic on $\tilde M$, which agrees with $\gamma$ on $M_0$.  We parametrize $\tilde \gamma$  by $(r(t),\theta(t))$ for $t\in\mathbb{R}.$ If $\tilde \gamma$ can be defined by an element $(r,\theta,s,\xi)$ in $T^*\tilde M$, then we have $$\ddot{r}=\frac{\partial_r\tilde{f}(r,\theta)}{\tilde{f}^3(r,\theta)}\xi^2$$ along $\tilde \gamma.$

We first aim to show $\partial_r\tilde f>0$ when the metrics of $M$ and $\tilde M$ disagree. Compute
\begin{align}\label{partialtildef}
    \partial_r\tilde{f}(r,\theta)=a(r)\partial_rf(r,\theta)+(1-a(r))c_1\cosh(  c_1r)+a'(r)(f(r,\theta)-\sinh(  c_1r)).
\end{align}
We know $ a(r)\geq 0$ and $(1-a(r))    c_1\cosh(  c_1r)\geq 0$. Moreover, since  $f>0$ and $$\partial_rf^{-2}(r,\theta)=-\frac{2\partial_rf(r,\theta)}{f^3(r,\theta)}<0$$ by our assumption \eqref{assumption3} on $M$, we know $\partial_rf(r,\theta)>0.$ Finally, we have $a'(r)\leq 0$ for all $r$, and we may choose $c_1\gg1$ large enough, so that $f(r,\theta)-\sinh(c_1r)\leq 0$ for $r\in[A,A+1]$ to obtain $$a'(r)(f(r,\theta)-\sinh(c_1r))\geq0.$$ All the terms in \eqref{partialtildef} are nonnegative, and the first two have a positive sum. Thus, $\partial_r\tilde f>0$ throughout the end. Consequently, $\ddot r\geq0$, with strict inequality if the angular covector is nonzero. After reversing the direction of the geodesic if necessary, we may assume $\dot r\geq0$ at the point under consideration. If $\dot r=0$, the angular covector is nonzero and $\dot r$ becomes positive. For unit-speed radial geodesics, $\dot r=\pm1$. Therefore, in at least one direction, the geodesic escapes to infinity and cannot return after crossing a level of $r$ outward. Thus, $\operatorname{supp}([\chi,\Delta_{\tilde g}])$ is nontrapping.

For $r>A+1$, put $x=e^{-c_1r}$. Then
\[
c_1^2\tilde g=x^{-2}\left(dx^2+\frac{c_1^2}{4}(1-x^2)^2d\theta^2\right).
\]
Thus, $c_1^2\tilde g$ is even asymptotically hyperbolic in the usual normalization. We apply the results for even asymptotically hyperbolic surfaces after this fixed rescaling. This changes only the constants and the fixed frequency and time scales. We write $\tilde P=\sqrt{-\Delta_{\tilde g}}$.

\subsection{Proof of Theorem \ref{Strichartzthm}}

To prove Theorem \ref{Strichartzthm}, we recall the following lemmas.

\noindent (a) Lossless Strichartz and local smoothing estimates in the nontrapping region: Let $\chi\in C_0^{\infty}(M)$ with $\chi=1$ on a neighborhood of $\pi(K)$,
    \begin{equation}\label{eq:stri-nontrap-region}
    \|(1-\chi)e^{-it\Delta_g}u_0\|_{L^p_tL^q_x(M\times [0,1])}\leq C \|u_0\|_{L^2(M)}.
\end{equation}
(b) Lossless local smoothing in the nontrapping region on $M$: Fix $\beta\in C_0^{\infty}((1/2,2))$,  for $\chi\in C_0^{\infty}(M_\infty)$, we have 
\begin{equation}\label{eq:local-sm-nontrap-M}
   \|\chi e^{-it\Delta_g}\beta(\sqrt{-\Delta_g}/\la)u_0 \|_{L^2_{t,x}(M\times\mathbb R)}\leq C\la^{-1/2}\|u_0\|_{L^2(M)}.
\end{equation}
(c) Lossless local smoothing in the nontrapping region on $\tilde M$: If  we have $\chi\in C_0^{\infty}(\tilde M)$, such that $\chi=0$ on a neighborhood of the trapped set of $\tilde M,$ then \begin{equation}\label{eq:local-sm-nontrap}
   \|\chi e^{-it\Delta_{\tilde g}}\beta(\sqrt{-\Delta_{\tilde g}}/\la)u_0 \|_{L^2_{t,x}(\tilde M\times\mathbb R)}\leq C\la^{-1/2}\|u_0\|_{L^2(\tilde M)}.
\end{equation}

\noindent(d) Lossless Strichartz on asymptotically hyperbolic surface with negative curvature: For $(p,q)$ satisfying \eqref{keeltao}, we have
 \begin{equation}\label{eq:stri-unit-trap}
    \| e^{-it\Delta_{\tilde g}}\beta(\sqrt{-\Delta_{\tilde g}}/\la)u_0\|_{L^p_tL^q_x(\tilde M\times [0,1])}\leq C \|u_0\|_{L^2(\tilde M)}.
\end{equation}

The smoothing estimates above are used for $\la\gg1$ and on the whole time axis. We will also use, for any $\kappa\in C_0^\infty(M)$,
\begin{equation}\label{eq:local-sm-log-M}
\|\kappa e^{-it\Delta_g}\beta(P/\la)u_0\|_{L^2_{t,x}(M\times\mathbb R)}
\lesssim \la^{-1/2}(\log\la)^{1/2}\|u_0\|_{L^2(M)}.
\end{equation}
We first justify the resolvent input on $M$, so that the use of smoothing on the transition region does not require a further assumption.

By the main theorem of \cite{tao2026spectralgapsurfacesinfinite}, for any compact cutoff $\kappa$ on $\tilde M$,
\[
\|\kappa(-\Delta_{\tilde g}-(\la+i0)^2)^{-1}\kappa\|_{L^2(\tilde M)\to L^2(\tilde M)}
\lesssim\la^{-1}\log\la.
\]
The fixed rescaling above and the constant spectral shift in that theorem do not change this high-energy bound.

We transfer this bound to $M$ using \cite[Theorem 2.1]{DVgluing}. Put $h=\la^{-1}$ and consider $-h^2\Delta_g-1$ at energies in a fixed small neighborhood of zero. Choose the overlapping cutoff levels inside the collar where $g=\tilde g$. The interior model is $-h^2\Delta_{\tilde g}-1$, with cutoff resolvent bound
\[
a_1(h)\lesssim h^{-1}\log(1/h).
\]
For the exterior model, keep each end unchanged, cap it off inside the inner edge of the collar, and add a complex absorbing potential $-iW$, where $W\geq0$ is smooth. The function $W$ vanishes on the matching region and is positive before a geodesic reaches the part of the cap where radial convexity is no longer imposed. Every trajectory outside the absorbing region either reaches it or escapes to infinity. The nontrapping estimate and outgoing property therefore give
\[
a_0(h)\lesssim h^{-1};
\]
see \cite{VasyZworski2000} and the propagation argument with complex absorption in \cite[Sections 4 and 5.1]{DVgluing}.

The convexity condition \eqref{assumption3} holds on the original ends, and $\partial_r\tilde f>0$ holds on the background ends. Thus, after crossing a cutoff level outward, a geodesic cannot return. In particular the interior region is bicharacteristically convex in the background model. These are the convexity and no-return conditions in \cite[Section 2]{DVgluing}. The exterior model is outgoing on the overlap; for the interior model the outgoing condition is only needed on backward trajectories not meeting the core, and follows from propagation from its hyperbolic infinity. Thus, all the gluing is performed where the two metrics agree. The resulting bound is
\[
h^2a_0(h)^2a_1(h)\lesssim h^{-1}\log(1/h).
\]
Returning to the original frequency, we obtain
\begin{equation}\label{eq:res-log-M}
\|\kappa(-\Delta_g-(\la+i0)^2)^{-1}\kappa\|_{L^2(M)\to L^2(M)}
\lesssim \la^{-1}\log\la.
\end{equation}
Here we only use the boundary values at high positive energies, not continuation into a resonance-free strip. The same localized high-energy bound holds uniformly for the adjacent nonreal parameters by the limiting absorption argument.

We may now apply \cite[Theorem 1.2]{nontrappinglocalsmoothing} on $M$ to obtain the $O(\la^{-1})$ bound when both cutoffs are supported away from the trapped set. The same theorem and the background resolvent bound give this improvement on $\tilde M$. By \cite[Theorem 7.2 and Interpretation 2]{spectralbookdyatlov}, these bounds imply \eqref{eq:local-sm-nontrap-M}, \eqref{eq:local-sm-nontrap}, and \eqref{eq:local-sm-log-M}, with time interval $\mathbb R$ and a high-frequency cutoff. By the corresponding localized derivative estimate, we also have
\begin{equation}\label{eq:comm-sm}
\|[\Delta_g,\kappa]e^{-it\Delta_g}\beta(P/\la)u_0\|_{L^2_{t,x}(M\times\mathbb R)}
\lesssim \la^{1/2}\|u_0\|_{L^2(M)},
\end{equation}
if the derivatives of $\kappa$ are supported away from the trapped set; the same estimate holds on $\tilde M$. This follows by applying the localized smoothing estimate with one derivative, or by the argument of \cite[Lemma 2.6]{HSTZ}.
Finally, \eqref{eq:stri-nontrap-region} follows from \cite[Theorem 3.6]{BGH}, and \eqref{eq:stri-unit-trap} follows from \cite[Theorem 1.1]{HSTZ}.

In addition, we recall the Littlewood Paley estimate for manifolds with bounded geometry, \cite[Lemma 4.1]{HSTZ}.
\begin{lemma}\label{littlewood}
    Let 
    $\beta\in C_0^\infty (1/2, 2)$ with $\sum_{k=-\infty}^\infty \beta(s/2^k)=1$, and define $\beta_k(s)=\beta(s/2^k)$ for $k\geq1$, $\beta_0(s)= \sum_{k\le 0} \beta(s/2^k)$. If $(M,g)$ is a complete manifold of bounded geometry, we have for  $2\le q<\infty$
    \begin{equation}\label{little1}
        \|u\|_{L^q(M)}\lesssim \|Su\|_{L^q(M)} +\|u\|_{L^2(M)},
    \end{equation}
    where $Su=\left(\sum_{k\ge0} |\beta_k(\sqrt{-\Delta_g})u|^2\right)^{\frac12}$.
\end{lemma}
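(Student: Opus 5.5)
The plan is to treat Lemma~\ref{littlewood} as a square-function estimate and derive it from two standard ingredients on bounded-geometry manifolds. The first is a Mikhlin-type multiplier theorem for $m(\sqrt{-\Delta_g})$, with $m$ even and satisfying symbol bounds, acting boundedly on $L^q$ for $1<q<\infty$ at high frequencies. The second is a Rademacher randomization. I would split $u=\beta_0(P)u+(1-\beta_0(P))u$, where $P=\sqrt{-\Delta_g}$.

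\textbf{Low frequencies.} The term $\beta_0(P)u$ is spectrally localized to $[0,2]$. For a fixed Schwartz even function $\rho$ with $\rho=1$ on $[0,2]$, write $\beta_0(P)u=\rho(P)\beta_0(P)u$. Bounded geometry and finite propagation speed make the kernel of $\rho(P)$ uniformly integrable at small scales with rapid off-diagonal decay. This gives $\rho(P):L^2\to L^q$ for $2\le q<\infty$, by the local Sobolev embedding on unit balls of uniform size. Hence $\|\beta_0(P)u\|_{L^q}\lesssim\|u\|_{L^2}$.

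\textbf{High frequencies.} For the part $v=(1-\beta_0(P))u=\sum_{k\ge1}\beta_k(P)u$, I would use duality together with the reverse square-function inequality. Choose $\tilde\beta$ with $\tilde\beta\beta=\beta$ and the same support properties. For random signs $\epsilon_k$, the multiplier $m_\epsilon(s)=\sum_{k\ge1}\epsilon_k\tilde\beta(s/2^k)$ satisfies Mikhlin bounds $|\partial_s^jm_\epsilon(s)|\lesssim (1+s)^{-j}$ uniformly in $\epsilon$. By the bounded-geometry multiplier theorem (proved as in \cite{HSTZ}, via the Cheeger--Gromov--Taylor finite propagation speed argument splitting the kernel into a local part, treated by Calder\'on--Zygmund theory in uniform coordinate charts, and a rapidly decaying tail), $\|m_\epsilon(P)\|_{L^{q'}\to L^{q'}}\lesssim1$. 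Khintchine's inequality then gives
\[
\Big\|\Big(\sum_{k\ge1}|\tilde\beta_k(P)w|^2\Big)^{1/2}\Big\|_{L^{q'}}\lesssim\|w\|_{L^{q'}}.
\]

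\textbf{Duality and conclusion.} Pairing $v$ with $w\in L^{q'}$ and using self-adjointness,
\[
\langle v,w\rangle=\sum_{k\ge1}\langle\beta_k(P)u,\tilde\beta_k(P)w\rangle .
\]
Cauchy--Schwarz in $k$ followed by H\"older then yields $|\langle v,w\rangle|\lesssim\|Su\|_{L^q}\|w\|_{L^{q'}}$. Two points of bookkeeping remain. First, the $k=0$ piece of $Su$ is harmless, since only $k\ge1$ is used here. Second, the identity $\beta_k\tilde\beta_k=\beta_k$ requires $\tilde\beta_k$ to equal $1$ on $\operatorname{supp}\beta_k$, which is arranged by choosing $\tilde\beta\in C_0^\infty(1/4,4)$.

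\textbf{Main obstacle.} The hardest step is the uniform $L^{q'}$ multiplier bound at the high-frequency scale on a noncompact manifold. The key difficulty is controlling the non-local part of the kernel of $m_\epsilon(P)$ without any volume-growth assumption. I would handle it by writing $m_\epsilon(P)$ via the Fourier transform in terms of $\cos(tP)$, and cutting at $|t|\le1$ with a smooth cutoff. For $|t|\le1$, finite propagation speed localizes the kernel to unit balls, where bounded geometry gives uniform Euclidean-like Calder\'on--Zygmund estimates. The remaining part is $\psi(P)$ with $\psi$ rapidly decreasing at infinity; since only frequencies $\ge1$ matter, it is dominated by a smoothing operator bounded on $L^{q'}$, using the corresponding heat-kernel bounds valid under bounded geometry.
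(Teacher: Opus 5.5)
Your low-frequency step and the overall duality--Khintchine skeleton are fine, but the high-frequency input is false in the generality of the lemma. You need $\sup_\epsilon\|m_\epsilon(P)\|_{L^{q'}\to L^{q'}}<\infty$, and hence $\|(\sum_{k\ge1}|\tilde\beta_k(P)w|^2)^{1/2}\|_{L^{q'}}\lesssim\|w\|_{L^{q'}}$, on every complete manifold of bounded geometry.

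On the hyperbolic plane this already fails for the single term $\tilde\beta_1(P)=\tilde\beta(P/2)$, which your square function dominates pointwise. By the Clerc--Stein theorem, if a function $m(P)$ of $P$ is bounded on $L^p(\mathbb H^2)$ with $p\neq2$, then $\lambda\mapsto m(\sqrt{1/4+\lambda^2})$ extends holomorphically to a strip about $\mathbb R$. But $\tilde\beta(s/2)$ vanishes for $s$ near the bottom $1/2$ of the spectrum without vanishing identically, so it has no such extension.

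The step that actually breaks is your treatment of the tail $\psi(P)$ in the last paragraph. Its kernel decays faster than any power of $d(x,y)$, but that does not beat exponential volume growth. Cheeger--Gromov--Taylor type arguments control the tail only for multipliers holomorphic in a strip, where $\widehat{m}(t)$ decays exponentially. In fact $\psi(P)$ itself is unbounded on $L^{q'}(\mathbb H^2)$ for $q>2$, because $\psi$ differs from $m_\epsilon$ by an entire even function. No heat-kernel domination can change this.

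The fix, and the reason \eqref{little1} carries $\|u\|_{L^2}$ rather than $\|u\|_{L^q}$, is never to let the long-time part act on $w$. Fix an even $\vartheta\in C_0^\infty((-1/4,1/4))$ equal to one near $0$. Split $\tilde\beta_k(P)=L_k+\psi_k(P)$, where $L_k=(2\pi)^{-1}\int\vartheta(t)2^k\widehat{\tilde\beta}(2^kt)\cos(tP)\,dt$.

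The operators $\sum_k\epsilon_kL_k$ have kernels supported in $\{d(x,y)\le1/4\}$. By the Hadamard parametrix in uniform charts, these are uniform Calder\'on--Zygmund kernels. Khintchine then gives $\|(\sum_k|L_kw|^2)^{1/2}\|_{L^{q'}}\lesssim\|w\|_{L^{q'}}$, and this is the only $L^{q'}$ bound you actually need.

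The tails satisfy $|\psi_k(s)|\lesssim_N2^{-kN}(1+s)^{-N}$, so move them onto $u$: $\sum_k\langle\beta_k(P)u,\psi_k(P)w\rangle=\sum_k\langle\psi_k(P)\beta_k(P)u,w\rangle$. Sobolev embedding gives $\|\psi_k(P)\beta_k(P)u\|_{L^q}\lesssim2^{-k}\|u\|_{L^2}$, which sums in $k$. With this change your argument proves the lemma. The paper itself gives no proof and simply quotes \cite[Lemma 4.1]{HSTZ}.
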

Now, we use \eqref{eq:stri-nontrap-region}--\eqref{little1} to obtain Theorem \ref{Strichartzthm}.

\begin{proof}[Proof of Theorem~\ref{Strichartzthm}]
By \eqref{little1}, it suffices to prove uniform estimates for data spectrally supported in a fixed dyadic annulus with $\la\gg1$. The bounded-frequency part follows from Sobolev estimates and unitarity. For this proof, we take $\beta$ to be an enlarged cutoff equal to one on this annulus, so that $u_0=\beta(P/\la)u_0$. The Littlewood--Paley summation still uses the original partition.

Notice that $M_\infty$ is assumed to be nontrapping.
By \eqref{eq:stri-nontrap-region}, it suffices to show that for any $\chi$, such that $\chi\equiv 1$ on $M_0$ and $\chi\equiv 0$ for $r\geq A+1/4$ on $M_\infty$, we have
    \begin{equation*}
        \|\chi e^{-it\Delta_g}u_0\|_{L^p_tL^q_x(M\times [0,1])}\leq C \|u_0\|_{L^2(M)}.
    \end{equation*}
    
 Let $u(t)=e^{-it\Delta_g}u_0$.
Notice that $v=\chi u$ solves \begin{equation*}
        \begin{cases}
            (i\partial_t-\Delta_{g})v=-[\Delta_g,\chi]u\\
            v(0)=\chi u_0.
        \end{cases}
    \end{equation*}
Then, \begin{equation*}
        v=e^{-it\Delta_{g}}\chi u_0+i\int^{t}_0e^{-i(t-s)\Delta_{g}}[\Delta_g,\chi]u(s) ds.
    \end{equation*}
    Notice that $\chi$ is supported where $g=\tilde g$. Thus, $v$ also solves \begin{equation*}
        \begin{cases}
            (i\partial_t-\Delta_{\tilde g})v=-[\Delta_g,\chi]u\\
            v(0)=\chi u_0,
        \end{cases}
    \end{equation*}  on $\tilde M$ and \begin{equation*}
        v=e^{-it\Delta_{\tilde g}}\chi u_0+i\int^{t}_0e^{-i(t-s)\Delta_{\tilde g}}[\Delta_g,\chi]u(s) ds.
    \end{equation*}
    
Let $\tilde \beta \in C^\infty_0((1/4,4))$ which equals one in a neighborhood of the support of $\beta$.
We aim to show 
\begin{align}\label{tailpart}
    \tilde\beta(\sqrt{-\Delta_{\tilde g}}/\la) \chi
\beta(\sqrt{-\Delta_{g}}/\la)f=\chi
\beta(\sqrt{-\Delta_{g}}/\la)f+R_1f,
\end{align}
where $\|R_1f\|_{L^q(\tilde M)}\le C_N\la^{-N}\|f\|_{L^2(M)}$  for $q\ge2$. 

By Lemma 4.2 of \cite{HSTZ}, we have\begin{align*}
    &\tilde\beta(\sqrt{-\Delta_{g}}/\la) \chi\\
    &=(2\pi)^{-1}\int \vartheta(t)\lambda\widehat{\tilde\beta}(\lambda t)\cos(tP)dt\chi+(2\pi)^{-1}\int (1-\vartheta(t))\lambda\widehat{\tilde\beta}(\lambda t)\cos(tP)dt\chi\\
    &=B\chi+R\chi
\end{align*}
 and 
 \begin{align*}
    &\tilde\beta(\sqrt{-\Delta_{\tilde g}}/\la) \chi\\
    &=(2\pi)^{-1}\int \vartheta(t)\lambda\widehat{\tilde\beta}(\lambda t)\cos(t\tilde P)dt\chi+(2\pi)^{-1}\int (1-\vartheta(t))\lambda\widehat{\tilde\beta}(\lambda t)\cos(t\tilde P)dt\chi\\
    &=\tilde B\chi+\tilde R\chi.
\end{align*}
Here $\tilde\beta$ is extended evenly to $\mathbb R$. We define $\vartheta\in C_0^\infty(\mathbb R)$ to be a function supported on $[-1/4,1/4]$ and $\equiv 1$ on $[-1/8,1/8]$. By the finite propagation of the Hadamard parametrix, $B\chi$ and $\tilde B\chi$ are supported on an $1/4$-neighborhood of $\textnormal{supp}(\chi)$. The remainders $R$ and $\tilde R$ are $O(\la^{-N})$ from $L^2$ to every fixed Sobolev space, for any $N$, by integration by parts in $t$ and the spectral theorem.

Notice that the metric on $M_0\cup (M_\infty\cap\{(r,\theta)\in [A,\infty)\times\partial M, r\leq A+1/2\})$ and $\tilde M_0\cup (\tilde M_\infty\cap\{(r,\theta)\in [A,\infty)\times\partial M, r\leq A+1/2\})$ are equal to each other. Thus, $B\chi$ agrees with $\tilde B\chi$ by the finite speed of propagation of Hadamard parametrix. By \cite[(2.7)]{HSTZ}, we know
$$\tilde\beta(\sqrt{-\Delta_{ g}}/\la) \chi\beta(\sqrt{-\Delta_{g}}/\la)=\chi\beta(\sqrt{-\Delta_{g}}/\la)+R',$$ for $||R'||_{L^2\to L^q}\lesssim O(\lambda^{-N})$. Thus, 
\begin{align}
\begin{split}
    B\chi
\beta(\sqrt{-\Delta_{g}}/\la)f&=\tilde\beta(\sqrt{-\Delta_{ g}}/\la) \chi\beta(\sqrt{-\Delta_{g}}/\la)f-R\chi\beta(\sqrt{-\Delta_{g}}/\la)f\\
&=\chi
\beta(\sqrt{-\Delta_{g}}/\la)f-R\chi \beta(\sqrt{-\Delta_{g}}/\la)f+R'f.
\end{split}
\end{align} Therefore,
\begin{align*}
    &\tilde\beta(\sqrt{-\Delta_{\tilde g}}/\la) \chi
\beta(\sqrt{-\Delta_{g}}/\la)f\\
&= \tilde B \chi
\beta(\sqrt{-\Delta_{g}}/\la)f+\tilde R\chi
\beta(\sqrt{-\Delta_{g}}/\la)f\\
&=B\chi
\beta(\sqrt{-\Delta_{g}}/\la)f+\tilde R \chi
\beta(\sqrt{-\Delta_{g}}/\la)f\\
&=\chi
\beta(\sqrt{-\Delta_{g}}/\la)f+R_1f,
\end{align*}
where $\|R_1f\|_{L^q(\tilde M)}\le C_N\la^{-N}\|f\|_{L^2(M)}$  for $q\ge2$ and any $N\in \mathbb{N}$. 

    Applying \eqref{tailpart} to $e^{-it\Delta_g}u_0$ gives an $O(\la^{-N})$ error uniformly on $[0,1]$. Thus, it suffices to estimate 
    \begin{equation*}
        \left\|e^{-it\Delta_{\tilde g}}\tilde\beta(\tilde P/\la)\chi u_0+i\int^{t}_0e^{-i(t-s)\Delta_{\tilde g}}\tilde\beta(\sqrt{-\Delta_{\tilde g}}/\la)[\Delta_g,\chi]u(s) ds\right\|_{L^p_tL^q_x([0,1]\times \tilde M)}.
    \end{equation*}

    For the first term above, by \eqref{eq:stri-unit-trap},
\begin{align*}
        \left\|e^{-it\Delta_{\tilde g}}\tilde\beta(\tilde P/\la)\chi u_0\right\|_{L^p_tL^q_x([0,1],\tilde M)}\lesssim||\chi u_0||_{L^2(M)}.
    \end{align*}
For the second term, notice that $[\Delta_g,\chi]$ is supported on the nontrapping region of both $\tilde M$ and $M.$ By \cite[Theorem 1.1]{HSTZ}, \eqref{eq:local-sm-nontrap-M} and \eqref{eq:local-sm-nontrap}, 
     \begin{align*}
       &\left\|\int_0^1e^{-i(t-s)\Delta_{\tilde g}}\tilde \beta(\sqrt{-\Delta_{\tilde g}}/\la)[\chi,\Delta_g]u(s,\cdot)ds\right\|_{L^p_tL^q_x([0,1],\tilde M)}\\
       &\lesssim\left\|\int_0^1e^{is\Delta_{\tilde g}}\tilde \beta(\sqrt{-\Delta_{\tilde g}}/\la)[\chi,\Delta_g]u(s,\cdot)ds\right\|_{L^2(\tilde M)}\\
       &\lesssim\lambda^{-1/2}||[\chi,\Delta_g]u(s,\cdot)||_{L^2_{s,x}([0,1],\tilde M)}\\
       &\lesssim||u_0||_{L^2(M)}.
    \end{align*}
   
    The same estimate holds for $$\int_0^te^{-i(t-s)\Delta_{\tilde g}}\tilde \beta(\sqrt{-\Delta_{\tilde g}}/\la)[\chi,\Delta_g]u(s,\cdot)ds$$ by the Christ--Kiselev Lemma.
    Finally, we apply Lemma~\ref{littlewood} and complete the proof of Theorem \ref{Strichartzthm}.
\end{proof}
\begin{remark}
    \textnormal{In \cite{HSTZ}, we proved the Strichartz estimate on even asymptotically hyperbolic surfaces with negative curvature. Now, we may use the same proof as Theorem~\ref{Strichartzthm} to prove the Strichartz estimate on asymptotically hyperbolic surfaces without assumptions on evenness and curvature on $M_\infty$. In this remark, we assume $M=M_0\cup M_\infty$ is an asymptotically hyperbolic surface satisfying \eqref{assumption1} and \eqref{assumption3}, and assume $M_0$ has negative sectional curvature.}
    
    \textnormal{We first use \cite[Theorem 1.2]{bouclet}, which gives the exterior Strichartz estimate with a cutoff equal to one on a sufficiently large compact set. To move the cutoff closer to the trapped set, we also use the high-frequency local smoothing estimate on a compact nontrapping region.}

\textnormal{For this estimate, the background construction is unchanged. For the exterior model, we cap off each end smoothly, keeping the original metric near the matching collar and choosing the radial coefficient with positive derivative away from the cap center. This gives a nontrapping asymptotically hyperbolic surface. We apply \cite[Theorems 1.1 and 1.2]{WangAH} to the Laplacian of this model to obtain a polynomial resolvent bound and the outgoing property without an evenness assumption. The gluing argument above therefore gives a polynomial cutoff resolvent bound on $M$; no logarithmic bound on the original $M$ is needed here. The sufficiently far-out $O(\la^{-1})$ estimate from \cite[Theorem 1.1]{Cardoso2002UniformEO}, followed by \cite[Theorem 1.3]{nontrappinglocalsmoothing} and propagation in the ends, gives the no-loss smoothing estimate on any fixed compact region disjoint from the trapped set. Applying the local argument of \cite[Theorem 3.6]{BGH} to the compact difference of the two cutoffs gives the following lemma.}
    \begin{lemma}
        Let $\chi\in C_0^{\infty}(M)$ with $\chi=1$ on a neighborhood the trapped set of $M$. For $(p,q)$ satisfying \eqref{keeltao},
    \begin{equation}\label{eq:hyperbolicstri-nontrap}
    \|(1-\chi)e^{-it\Delta_g}u_0\|_{L^p_tL^q_x(M\times [0,1])}\leq C \|u_0\|_{L^2(M)}.
\end{equation}
    \end{lemma}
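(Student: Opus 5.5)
We split $1-\chi=(1-\chi_R)+(\chi_R-\chi)$. Here $\chi_R\in C_0^\infty(M)$ equals one on a sufficiently large compact set $\{r\le R\}$, with $R$ so large that \cite[Theorem 1.2]{bouclet} applies to $1-\chi_R$. We may assume $\chi_R=1$ on $\operatorname{supp}\chi$. The exterior piece $\|(1-\chi_R)e^{-it\Delta_g}u_0\|_{L^p_tL^q_x(M\times[0,1])}\lesssim\|u_0\|_{L^2}$ is then exactly that theorem. It remains to treat the compactly supported piece $\psi:=\chi_R-\chi$, which vanishes on a neighborhood of the trapped set. By Lemma~\ref{littlewood}, bounded geometry and Sobolev embedding at bounded frequency, we reduce to data $u_0=\beta(P/\la)u_0$ with $\la\gg1$.

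For the compact piece we follow the local argument of \cite[Theorem 3.6]{BGH}. Since $\psi$ has compact support in a region where the geodesic flow is nontrapping in the sense relevant to a time-$\la^{-1}$ scale, we cover $[0,1]$ by intervals $I_j$ of length $\sim\la^{-1}$. On each $I_j$ we use the semiclassical dispersive/Strichartz estimate for $e^{-it\Delta_g}$ localized to $\operatorname{supp}\psi$, which follows from a short-time parametrix since the geometry is bounded. Thus
\begin{align*}
\|\psi e^{-it\Delta_g}u_0\|_{L^p_tL^q_x(I_j\times M)}\lesssim \|\tilde\psi e^{-it_j\Delta_g}u_0\|_{L^2}+\la^{-1/2}\|[\Delta_g,\psi]e^{-it\Delta_g}u_0\|_{L^2_{t,x}(I_j^*\times M)}+\dots,
\end{align*}
with $\tilde\psi$ a slightly enlarged cutoff, still vanishing near the trapped set. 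We then square-sum over $j$, using $p\ge2$. Averaging $t_j$ over $I_j$ turns the first term into $\la^{1/2}\|\tilde\psi e^{-it\Delta_g}u_0\|_{L^2_{t,x}}$. The second term, by the commutator estimate analogous to \eqref{eq:comm-sm}, is $\lesssim\la^{-1/2}\cdot\la^{1/2}\|u_0\|$. Hence everything reduces to lossless local smoothing on compact regions away from the trapped set:
\begin{align*}
\|\tilde\psi e^{-it\Delta_g}\beta(P/\la)u_0\|_{L^2_{t,x}(M\times\mathbb R)}\lesssim\la^{-1/2}\|u_0\|_{L^2(M)},
\end{align*}
together with its one-derivative version.

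To prove this smoothing estimate we use the resolvent route described above. First, we obtain a polynomial cutoff resolvent bound on $M$ by gluing, via \cite[Theorem 2.1]{DVgluing}. The interior model is the negatively curved background $\tilde M$, using \cite{tao2026spectralgapsurfacesinfinite}. The exterior model is the capped nontrapping asymptotically hyperbolic surface, using \cite[Theorems 1.1 and 1.2]{WangAH}. Both gluing regions lie where the metrics agree, and the convexity from \eqref{assumption3} gives the no-return hypotheses. Next, we combine this with the far-out $O(\la^{-1})$ bound of \cite[Theorem 1.1]{Cardoso2002UniformEO}, which uses \eqref{assumption1}, \eqref{assumption3}. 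By \cite[Theorem 1.3]{nontrappinglocalsmoothing} and propagation along the outgoing ends, this yields $O(\la^{-1})$ bounds for $\tilde\psi R(\la+i0)\tilde\psi$ whenever $\tilde\psi$ is supported away from the trapped set. Finally, \cite[Theorem 7.2]{spectralbookdyatlov} converts the resolvent bound into the smoothing estimate.

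The main obstacle is this last transfer step. The difficulty is that the Cardoso--Vodev bound only covers cutoffs supported far out, whereas $\tilde\psi$ may approach the trapped set. I would handle it through \cite[Theorem 1.3]{nontrappinglocalsmoothing}: its hypotheses need only a polynomial bound for compactly cutoff resolvents (supplied by the gluing) plus the fact that every bicharacteristic through $\operatorname{supp}\tilde\psi$ reaches the far-out region in one time direction. That fact follows from $\partial_rf>0$ and $\ddot r\ge0$, exactly as in the nontrapping argument for $\tilde M$.
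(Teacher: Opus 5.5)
Your proposal is correct and follows essentially the same route as the paper. It uses \cite[Theorem 1.2]{bouclet} for $1-\chi_R$, and the local argument of \cite[Theorem 3.6]{BGH} for the compact difference $\chi_R-\chi$. The lossless smoothing away from the trapped set comes, as in the paper, from gluing the negatively curved background with the capped nontrapping model of \cite{WangAH}, followed by \cite[Theorem 1.1]{Cardoso2002UniformEO}, \cite[Theorem 1.3]{nontrappinglocalsmoothing} and propagation in the ends.

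One small correction: the one-sided escape of trajectories through $\operatorname{supp}\tilde\psi$ comes from the definition of the trapped set, since $\operatorname{supp}\tilde\psi$ may meet $M_0$. The convexity $\partial_r f>0$, $\ddot r\geq0$ only guarantees that a trajectory does not return once it has crossed a radial level outward.
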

    \textnormal{Then, we may use \eqref{assumption3} and the curvature assumption on $M_0$ to construct a background surface of $M$, which is even asymptotically hyperbolic with negative curvature and agrees with $M$ on $M_0$. Finally, we duplicate the proof of Theorem~\ref{Strichartzthm} to obtain the following.}
    \begin{theorem}
         Assuming $M$ as above, for $(p,q)$ satisfying \eqref{keeltao}, \begin{align}\label{eq:hyperbolicthmstrichart}
    \|e^{-it\Delta_g}u_0\|_{L^p_tL^q_x(M\times [0,1])}\leq C \|u_0\|_{L^2(M)}.
    \end{align}
    \end{theorem}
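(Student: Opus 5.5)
The plan is to repeat the proof of Theorem~\ref{Strichartzthm}, using \eqref{eq:hyperbolicstri-nontrap} as the exterior input and an even asymptotically hyperbolic, negatively curved background surface $\tilde M$ that agrees with $M$ on $M_0$ together with a short collar.

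\textbf{Step 1: reduction.} By Lemma~\ref{littlewood}, which applies since $M$ has bounded geometry, it suffices to treat data with $u_0=\beta(P/\la)u_0$ for $\la\gg1$. The bounded-frequency part follows from Sobolev embedding and unitarity. By \eqref{eq:hyperbolicstri-nontrap}, it then suffices to bound $\chi e^{-it\Delta_g}u_0$, where $\chi\equiv1$ on $M_0$ and $\chi\equiv0$ for $r\geq A+1/4$.

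\textbf{Step 2: background surface.} I build $\tilde M$ exactly as in Section 2.1. After a fixed rescaling, $M$ is negatively curved on $[A,A+1]\times\partial M$. Set $\tilde f=(1-a)\sinh(c_1r)+af$ with $c_1\gg1$. The computations of $\partial_{rr}\tilde f$ and $\partial_r\tilde f$ go through verbatim, since they only use negative curvature on $[A,A+1]$ and $\partial_rf>0$ from \eqref{assumption3}. Hence:
\begin{itemize}
\item $\tilde M$ is negatively curved;
\item $\tilde M$ is even asymptotically hyperbolic for $r>A+1$;
\item $\operatorname{supp}[\chi,\Delta_{\tilde g}]$ is nontrapping in $\tilde M$.
\end{itemize}
This gives \eqref{eq:stri-unit-trap} on $\tilde M$ from \cite[Theorem 1.1]{HSTZ}. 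It also gives \eqref{eq:local-sm-nontrap} from the logarithmic resolvent bound of \cite{tao2026spectralgapsurfacesinfinite} combined with \cite[Theorem 1.2]{nontrappinglocalsmoothing}.

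\textbf{Step 3: local smoothing on $M$.} On $M$ itself I need the analogue of \eqref{eq:local-sm-nontrap-M} for a cutoff whose support meets the collar where $[\Delta_g,\chi]$ lives. This is the step the remark prepares. The plan has three parts.
\begin{itemize}
\item Glue the $\tilde M$ interior model, with its $h^{-1}\log(1/h)$ bound, to the capped nontrapping asymptotically hyperbolic exterior model. The exterior model has a polynomial bound and the outgoing property by \cite[Theorems 1.1 and 1.2]{WangAH}. The gluing uses \cite[Theorem 2.1]{DVgluing}, with the convexity and no-return conditions supplied by \eqref{assumption3} and by $\partial_r\tilde f>0$. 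This yields a polynomial cutoff resolvent bound on $M$.
\item Combine it with the far-out $O(\la^{-1})$ bound of \cite[Theorem 1.1]{Cardoso2002UniformEO} and with \cite[Theorem 1.3]{nontrappinglocalsmoothing}. This gives the lossless $O(\la^{-1})$ resolvent bound for cutoffs supported away from the trapped set.
\item Pass to $\la^{-1/2}$ local smoothing on all of $\mathbb R_t$ by \cite[Theorem 7.2]{spectralbookdyatlov}, and obtain the one-derivative version \eqref{eq:comm-sm} for $[\Delta_g,\chi]u$.
\end{itemize}
I expect this step to be the main obstacle. The gluing hypotheses have to be checked without evenness on the ends. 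In particular, the outgoing property of the exterior model must come from \cite{WangAH} rather than from the even structure.

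\textbf{Step 4: Duhamel and transfer.} Write $v=\chi u$ by Duhamel. Since $\operatorname{supp}\chi$ lies where $g=\tilde g$, $v$ solves the same equation on $\tilde M$. The identity \eqref{tailpart} follows exactly as before from finite propagation speed of the Hadamard parametrix. It lets me insert $\tilde\beta(\tilde P/\la)$ up to an $O(\la^{-N})$ error. I then bound the two terms:
\begin{itemize}
\item The homogeneous term is controlled by \eqref{eq:stri-unit-trap}.
\item For the inhomogeneous term with time integral over all of $[0,1]$, apply the dual form of \eqref{eq:local-sm-nontrap} on $\tilde M$. This gives $\lesssim\la^{-1/2}\|[\chi,\Delta_g]u\|_{L^2_{s,x}}$, and Step 3 bounds that by $\|u_0\|_{L^2}$.
\end{itemize}
Since $p>2$, the Christ--Kiselev lemma passes to the truncated integral $\int_0^t$. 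Summing the dyadic pieces with Lemma~\ref{littlewood} completes the proof of \eqref{eq:hyperbolicthmstrichart}.
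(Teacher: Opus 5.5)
Your proposal is correct and follows the paper's route. The paper likewise combines the exterior lemma \eqref{eq:hyperbolicstri-nontrap}, the unchanged negatively curved even asymptotically hyperbolic background, and lossless smoothing on the compact nontrapping collar of $M$ obtained from \cite{WangAH}, \cite{DVgluing}, \cite{Cardoso2002UniformEO} and \cite{nontrappinglocalsmoothing}, and then repeats the Duhamel/transfer argument of Theorem~\ref{Strichartzthm}. The differences are cosmetic: you place the gluing step inside the theorem's proof rather than before the lemma, and you leave implicit the application of \eqref{eq:stri-unit-trap} to $\int_0^1 e^{is\Delta_{\tilde g}}\cdots\,ds$ before the dual smoothing estimate.
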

\end{remark}

\section{Spectral Projection Estimates}

We first prove Theorem~\ref{specthm} for $6<q<\infty$ in Subsections 3.1--3.3. Theorem~\ref{specthm-full} is proved in Subsection 3.4. Throughout this section, the ends are asymptotically Euclidean as in \eqref{eq:aemetric}.

Let $\rho\in {\mathcal S}(\mathbb R)$ satisfy $\rho(0)=1$ and have Fourier transform vanishing outside of $[-1,1]$. By a fixed rescaling, we may also assume
\[
|\rho(s)|\geq\tfrac12,\qquad |s|\leq3.
\]
Let $\varepsilon>0$
be sufficiently small. We choose $\chi_\infty\in C^\infty(M)$ such
that, on each end,
\[
\chi_\infty=0\quad\text{for }r\leq\varepsilon^{-1},
\qquad
\chi_\infty=1\quad\text{for }r\geq2\varepsilon^{-1}.
\]
In addition, let $\chi_0\in C_0^\infty(M)$ such that $\chi_0=1$ on a neighborhood of $M_0$ and $\chi_0$ is supported in $M_0$ together with the part of the collar where the metric agrees with the background metric constructed in Section 2. All the spatial cutoffs are real valued.
Finally, define
\[
\chi_c:=1-\chi_0-\chi_\infty.
\]

Let $\beta$ be the Littlewood-Paley function as before, and let
$f_\la:=\beta(\sqrt{-\Delta_g}/\la)f.$
It suffices first to prove that for $\la\gg1$ and $0<\delta\leq(\log\la)^{-1}$, we have
\begin{equation}\label{3.3}
\| \chi_0 \rho((\la \delta)^{-1}(-\Delta_g-\la^2))f_\la\|_{L^q(M)}\lesssim 
    \la^{\mu(q)}\delta^{{\frac{1}{2}}}\|f\|_{L^2(M)},
\end{equation}
\begin{equation}\label{3.5}
\| \chi_c \rho((\la \delta)^{-1}(-\Delta_g-\la^2))f_\la\|_{L^q(M)}\lesssim 
    \la^{\mu(q)}\delta^{{\frac{1}{2}}}\|f\|_{L^2(M)},
\end{equation}
and
\begin{equation}\label{3.4}
\| \chi_\infty \rho((\la \delta)^{-1}(-\Delta_g-\la^2))f_\la\|_{L^q(M)}\lesssim\la^{\mu(q)}\delta^{\frac12} \|f\|_{L^2(M)}.
\end{equation}

 \subsection{Estimate on support of $\chi_0$}
Define $\tilde M$ to be an asymptotically hyperbolic manifold with strictly negative curvature as in the previous section, such that $M$ and $\tilde M$ agree on a neighborhood of $\textnormal{supp}(\chi_0)$. We define $u:=e^{-it\Delta_g}f_\la$ and $v:=\chi_0e^{-it\Delta_g}f_\la.$ Then, $v$ solves the Cauchy problem,
\begin{equation}\label{6}
\begin{cases}
(i\partial_t-\Delta_g)v=[\chi_0,\Delta_g]u,\\
v|_{t=0}=\chi_0f_\la.
\end{cases}
\end{equation}
Since the metrics agree on a neighborhood of $\operatorname{supp}\chi_0$, $v$ solves the same problem with $\Delta_g$ on the left replaced by $\Delta_{\tilde g}$. Thus,
\begin{equation}\label{71}
v=e^{-it\Delta_{\tilde g}}(\chi_0f_\la)
+i\int_0^t e^{-i(t-s)\Delta_{\tilde g}}[\Delta_g,\chi_0]u(s,\cdot)\,ds.
\end{equation}
Choose $\alpha\in C_0^\infty((-1,1))$ satisfying $\sum_j\alpha(t-j)=1$. Let
\[
L=\log\la,\qquad a=\la/L,\qquad \alpha_j(t)=\alpha(at-j),
\qquad I_j=[t_j^-,t_j^+]=[(j-1)/a,(j+1)/a].
\]
We apply the time cutoff to the whole solution and set $v_j=\alpha_jv$. Then
\[
(i\partial_t-\Delta_{\tilde g})v_j=iF_{j,v}-F_{j,w},
\qquad F_{j,v}=\alpha_j'\chi_0u,\quad
F_{j,w}=\alpha_j[\Delta_g,\chi_0]u.
\]
For $\sigma=v,w$, define
\[
K_{j,\sigma}(t)=\int_{t_j^-}^t e^{is\Delta_{\tilde g}}F_{j,\sigma}(s)\,ds.
\]
Since $v_j(t_j^-)=0$, Duhamel's formula gives
\begin{equation}\label{7}
v_j(t)=e^{-it\Delta_{\tilde g}}\bigl(K_{j,v}(t)+iK_{j,w}(t)\bigr).
\end{equation}
By the inverse Fourier transform,
\begin{equation}\label{2.3chi0}
\chi_0\rho((\la\delta)^{-1}(-\Delta_g-\la^2))f_\la
=(2\pi)^{-1}\la\delta\sum_j\int_{I_j}
e^{-it\la^2}\widehat\rho(\la\delta t)v_j(t)\,dt.
\end{equation}

We recall the spectral projection theorem in \cite[Theorem 1.2]{HSTZ}.
\begin{lemma}\label{hyperbolicspec}
Let $\tilde M$ be an even asymptotically hyperbolic surface with negative curvature. Then for $\la\gg1$, we have the uniform bounds
\begin{equation}\label{ii.5}
\|\mathbf{1}_{[\la,\la+\delta]}(\tilde P)f\|_{L^q(\tilde M)}
\leq C_q\la^{\mu(q)}\delta^{1/2}\|f\|_{L^2(\tilde M)},
\qquad 6\leq q<\infty,\quad 0<\delta\leq1.
\end{equation}
\end{lemma}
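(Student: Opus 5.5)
Since this is \cite[Theorem 1.2]{HSTZ}, I would invoke that result directly. If I were to reprove it, I would use a $TT^*$ argument combined with a spectral measure bound.

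\textbf{Reduction to the spectral measure.} By $TT^*$, \eqref{ii.5} for all $0<\delta\leq 1$ follows from the spectral measure estimate
\[
\|dE_{\tilde P}(\la)\|_{L^{q'}(\tilde M)\to L^q(\tilde M)}\lesssim \la^{2\mu(q)},\qquad 6\leq q<\infty,\ \la\gg1.
\]
Integrating this bound over $[\la,\la+\delta]$ gives $\|\mathbf 1_{[\la,\la+\delta]}(\tilde P)\|_{L^{q'}\to L^q}\lesssim \la^{2\mu(q)}\delta$, and taking square roots yields \eqref{ii.5}. Alternatively, I would first prove the bound on windows of width $\delta=(\log\la)^{-1}$, replacing the sharp window by the smooth operator $\rho((\la\delta)^{-1}(-\Delta_{\tilde g}-\la^2))$, exactly as in the decomposition \eqref{2.3chi0}. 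I would then upgrade to smaller $\delta$ using the spectral measure.

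\textbf{Log-scale step.} Write the smoothed operator as an integral of $e^{-it\Delta_{\tilde g}}$, or equivalently of $\cos(t\tilde P)$, over $|t|\lesssim (\la\delta)^{-1}$. I would split the time integral into two pieces.
\begin{itemize}
\item Near $t=0$: the local Hadamard parametrix reproduces Sogge's universal bound $\la^{\mu(q)}$.
\item Times $1\lesssim |t|\leq c\log\la$: because the curvature is negative, there are no conjugate points. I would lift to the universal cover, where the Hadamard parametrix is valid for all times. The kernel is then a sum over deck transformations, and the number of terms grows like $e^{C|t|}$.
\end{itemize}
Choosing $c$ small makes this exponential growth a harmless $\la^{\varepsilon}$-type loss, which is absorbed into the $L^{q'}\to L^q$ dispersive bound. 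Bounded geometry of the even asymptotically hyperbolic metric makes all constants uniform, and the interpolation gives the supercritical $6\le q$ bound.

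\textbf{Passage to $\delta<(\log\la)^{-1}$ (main obstacle).} Here compactness is gone, so Weyl-type or eigenfunction arguments are unavailable. Instead I would use Stone's formula $dE(\la)=\frac{\la}{\pi i}\bigl(R(\la+i0)-R(\la-i0)\bigr)$ with $R(\la\pm i0)=(-\Delta_{\tilde g}-(\la\pm i0)^2)^{-1}$, and split it into two parts.
\begin{itemize}
\item A short-time part, whose kernel is given by the wave parametrix for $|t|\leq c\log\la$ as above and which obeys the log-scale bound.
\item A long-time remainder, which is the resolvent conjugated by the wave propagator at time $\sim\log\la$.
\end{itemize}
To control the remainder, I would use the cutoff resolvent bound $\la^{-1}\log\la$ from \cite{tao2026spectralgapsurfacesinfinite} in the trapped region. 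Away from the trapped set I would use the nontrapping $O(\la^{-1})$ bound and local smoothing as in (b)--(c), combined with the $L^2\to L^q$ bound on unit-scale frequency windows. The difficulty is showing that the $\log\la$ loss in the resolvent is exactly offset by the $(\log\la)^{-1/2}$ gain from the log-scale estimate, so that the final bound is lossless in $\la$. I would attempt this via a $TT^*$ argument that places the resolvent between two log-scale projections.
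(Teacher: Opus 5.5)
Your primary route is the paper's own: Lemma~\ref{hyperbolicspec} is not reproved there but quoted directly from \cite[Theorem 1.2]{HSTZ}, so invoking that theorem is all that is needed. Your optional reproof sketch, however, would not recover the statement as written, for two reasons: the dispersive/$L^2$ interpolation with an $e^{C|t|}\sim\la^{Cc}$ loss can absorb that loss only for $q>6$ (at $q=6$ the exponents match exactly, with no room, consistent with Lemma~\ref{bgsp} being stated only for $q>6$), and the passage to $\delta<(\log\la)^{-1}$ is left open, since the $\la^{-1}\log\la$ resolvent bound holds only between compact cutoffs and so cannot simply be placed between the global log-scale projections.
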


Put $H_\la=\Delta_{\tilde g}+\la^2+ia$. For $\sigma=v,w$, define
\begin{align}\label{eq:R-core}
R'_{j,\sigma,\la}f
&=\la\delta\int_{I_j}e^{-itH_\la}
\frac{d}{dt}\bigl(e^{-at}\widehat\rho(\la\delta t)\bigr)K_{j,\sigma}(t)\,dt,\\
S_{j,\sigma,\la}f
&=\la\delta\int_{I_j}e^{-it\la^2}\widehat\rho(\la\delta t)F_{j,\sigma}(t)\,dt.\notag
\end{align}
The two Duhamel terms need not vanish separately at $t_j^+$. However, by \eqref{7},
\[
K_{j,v}(t)+iK_{j,w}(t)=0,\qquad t=t_j^-,t_j^+.
\]
Thus, the boundary terms cancel when we integrate the full expression in \eqref{7} by parts, and
\begin{equation}\label{eq:ibp-core}
\begin{split}
\la\delta\int_{I_j}e^{-it\la^2}\widehat\rho(\la\delta t)v_j(t)\,dt
={}&-iH_\la^{-1}\bigl(R'_{j,v,\la}f+S_{j,v,\la}f\bigr)\\
&+H_\la^{-1}\bigl(R'_{j,w,\la}f+S_{j,w,\la}f\bigr).
\end{split}
\end{equation}
Choose $\chi_1\in C_0^\infty(\tilde M)$ with $\chi_1=1$ on a neighborhood of $\operatorname{supp}\chi_0$. Since $v_j=\chi_1v_j$ and at most $O((\delta L)^{-1})$ intervals meet the support of $\widehat\rho(\la\delta t)$, the Cauchy--Schwarz inequality shows that \eqref{3.3} follows from
\begin{multline}\label{3.14q}
\sum_{\sigma=v,w}\left(\sum_j\|\chi_1H_\la^{-1}R'_{j,\sigma,\la}f\|_{L^q(\tilde M)}^2\right)^{1/2}
+\sum_{\sigma=v,w}\left(\sum_j\|\chi_1H_\la^{-1}S_{j,\sigma,\la}f\|_{L^q(\tilde M)}^2\right)^{1/2}\\
\lesssim \la^{\mu(q)}\delta L^{1/2}\|f\|_{L^2(M)}.
\end{multline}

We also state an analog of \cite[Lemma 2.5]{HSTZ}.
\begin{lemma}\label{global1}
Let $M$ and $\tilde M$ be as above. Let $\beta$ be the Littlewood Paley function, with $\beta_k(s)=\beta(s/2^k)$ for $k\geq1$ and $\beta_0(s)=\sum_{k\leq0}\beta(s/2^k)$. Suppose $f\in L^2(M)$ and $\la\gg1$. Let $\kappa\in C_0^\infty(M)$ have its derivatives supported in the region where the two metrics agree, away from both trapped sets. Let $\bigcup_j I_j=\mathbb R$, where the intervals have finite overlap and length $\lesssim1$. For $a_j\in C_0^\infty(I_j)$ with $|a_j|\lesssim1$, if $|\ln(2^k/\la)|>10$, then
\begin{multline}\label{eq:guess-2}
\left(\sum_j\left\|\int_{I_j}e^{is\Delta_{\tilde g}}\beta_k(\tilde P)
[\Delta_g,\kappa]\beta(P/\la)a_j(s)e^{-is\Delta_g}f\,ds\right\|_{L^2(\tilde M)}^2\right)^{1/2}\\
\lesssim_N\max\{2^k,\la\}^{-N}\|f\|_{L^2(M)}.
\end{multline}
If $|\ln(2^k/\la)|\leq10$, then
\begin{equation}\label{eq:guess-22}
\left(\sum_j\left\|\int_{I_j}e^{is\Delta_{\tilde g}}\beta_k(\tilde P)
[\Delta_g,\kappa]\beta(P/\la)a_j(s)e^{-is\Delta_g}f\,ds\right\|_{L^2(\tilde M)}^2\right)^{1/2}
\lesssim\|f\|_{L^2(M)}.
\end{equation}
The constants do not depend on derivatives of $a_j$. The same conclusions hold with $\tilde M=M$.
\end{lemma}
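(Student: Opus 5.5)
The plan is to follow \cite[Lemma 2.5]{HSTZ}, treating the two frequency regimes separately. Throughout we write $G_j=\int_{I_j}e^{is\Delta_{\tilde g}}\beta_k(\tilde P)[\Delta_g,\kappa]\beta(P/\la)a_j(s)e^{-is\Delta_g}f\,ds$.

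\emph{Comparable frequencies, $|\ln(2^k/\la)|\le10$.} Here we discard the localization $\beta_k(\tilde P)$ by boundedness on $L^2(\tilde M)$. For each $j$, the dual local smoothing estimate \eqref{eq:local-sm-nontrap} on $\tilde M$ gives
\[
\|G_j\|_{L^2}\lesssim 2^{-k/2}\|a_j[\Delta_g,\kappa]\beta(P/\la)e^{-is\Delta_g}f\|_{L^2_{s,x}}.
\]
It is applied at frequency $2^k\sim\la$, with a cutoff $\tilde\kappa$ equal to one on $\operatorname{supp}\nabla\kappa$ and supported away from the trapped set of $\tilde M$, and with $\beta_k$ absorbed into an enlarged Littlewood--Paley cutoff. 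Only $|a_j|\lesssim1$ enters, so the constants are independent of derivatives of $a_j$. Squaring and summing in $j$, finite overlap of the $I_j$ gives
\[
\sum_j\|G_j\|^2\lesssim\la^{-1}\|[\Delta_g,\kappa]e^{-is\Delta_g}\beta(P/\la)f\|^2_{L^2_{s,x}(M\times\mathbb R)}.
\]
By the commutator smoothing estimate \eqref{eq:comm-sm} on $M$, which applies since $\nabla\kappa$ is supported away from the trapped set of $M$, the right side is $\lesssim\la^{-1}\cdot\la\|f\|^2$. This proves \eqref{eq:guess-22}.

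\emph{Separated frequencies, $|\ln(2^k/\la)|>10$.} Here the goal is to show that the operator $\beta_k(\tilde P)[\Delta_g,\kappa]\beta(P/\la)$ is itself $O(\max\{2^k,\la\}^{-N})$ from $L^2(M)$ to $L^2(\tilde M)$, and even into $H^N$.
\begin{itemize}
\item First, using the argument behind \eqref{tailpart} (the Hadamard parametrix with the $\vartheta$ cutoff plus finite propagation), we replace $\beta(P/\la)$, acting near $\operatorname{supp}\kappa$, by $\beta(\tilde P/\la)$ on $\tilde M$. More precisely, we write $[\Delta_g,\kappa]=[\Delta_g,\kappa]\chi'$ with $\chi'$ supported where the metrics agree. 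Then $\chi'\beta(P/\la)=\chi'\tilde\beta(P/\la)\beta(P/\la)$, and the near-diagonal parts of $\chi'\tilde\beta(P/\la)$ and $\chi'\tilde\beta(\tilde P/\la)$ coincide, up to $O(\la^{-N})$ errors in every Sobolev norm.
\item This reduces matters to $\beta_k(\tilde P)\,[\Delta_{\tilde g},\kappa]\,\tilde\beta(\tilde P/\la)$ on $\tilde M$, a product of pseudodifferential operators. For $2^k\sim 2^{k}$ large, $\beta_k(\tilde P)$ and $\tilde\beta(\tilde P/\la)$ are localized to frequencies whose ratio exceeds $e^{10}$, so their symbolic composition through the differential operator $[\Delta_{\tilde g},\kappa]$ vanishes to all orders.
\item Making the remainder uniform requires the same wave-parametrix representation of $\beta_k(\tilde P)$. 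The error is $O((2^k+\la)^{-N})$, after accounting for the one derivative in the commutator. For $k=0$, $\beta_0(\tilde P)$ is smoothing and the bound is $O(\la^{-N})$.
\end{itemize}
With this operator bound in hand, each $G_j$ is bounded by $|I_j|\sup_s\|\cdots\|\lesssim\max\{2^k,\la\}^{-N}\|f\|$, using unitarity of both propagators. The square sum over $j$ is where a factor $(\#j)^{1/2}$ threatens, since the intervals cover $\mathbb R$. To avoid this, the small operator should be pulled outside the time integral rather than estimated inside it: write $\beta_k(\tilde P)[\Delta_g,\kappa]\beta(P/\la)=E+\beta_k(\tilde P)\tilde\kappa_2 T\tilde\kappa_2\beta(P/\la)$-type decompositions, with $E$ the smoothing error localized in space. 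Then apply the local smoothing estimates \eqref{eq:local-sm-nontrap-M} and \eqref{eq:local-sm-nontrap} (with extra powers of $\la$ or $2^k$ to spare) to convert $\sum_j$ into a global $L^2_{s,x}$ norm.

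\emph{Main obstacle.} The hardest step will be this last summation in the separated case: getting a bound uniform over infinitely many intervals needs global-in-time smoothing on both sides, not merely the pointwise operator smallness. Once the localized error is written as $\tilde\kappa_2E\tilde\kappa_2$ with $\tilde\kappa_2$ compactly supported away from the trapped sets, the two smoothing estimates give the bound with gain $\max\{2^k,\la\}^{-N}$. Finally, the case $\tilde M=M$ is identical, and is simpler since the parametrix comparison step is unnecessary.
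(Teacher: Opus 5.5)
\paragraph{Verdict.} Your comparable-frequency argument is the paper's: apply the adjoint of \eqref{eq:local-sm-nontrap} with a cutoff equal to one on $\operatorname{supp}\nabla\kappa$ and away from the background trapped set, then square, use finite overlap, and apply \eqref{eq:comm-sm}. (You do not actually ``discard'' $\beta_k(\tilde P)$ there; it is what produces the factor $2^{-k/2}$, as your own estimate uses.) In the separated case you use the paper's ingredients: the short-time wave parametrix with the cutoff $\vartheta$, finite propagation speed, disjoint symbolic supports, and local smoothing on $M$ to turn $\sum_j$ into a global $L^2_{s,x}$ norm. However, the step that closes the sum over $j$ has a gap.

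\paragraph{The gap.} You assert that every error can be written as $\tilde\kappa_2E\tilde\kappa_2$ with $\tilde\kappa_2$ compactly supported away from the trapped sets. This fails for the remainder produced in your first bullet. Let $R=(2\pi)^{-1}\int(1-\vartheta(t))\la\widehat{\tilde\beta}(\la t)\cos(tP)\,dt$ be the long-time part of $\tilde\beta(P/\la)$. The term $[\Delta_g,\kappa]R\,\beta(P/\la)e^{-is\Delta_g}f$ involves $\cos(tP)$ for all $|t|\geq 1/8$ and has no spatial cutoff on the input side. So \eqref{eq:local-sm-nontrap-M} cannot be applied to it, and its $O(\la^{-N})$ operator norm alone brings back exactly the $(\#j)^{1/2}$ loss you identified.

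\paragraph{The missing idea.} This is the paper's \eqref{goal2}: $R$ is a function of $P$, hence commutes with $e^{-is\Delta_g}$. The term therefore equals $[\Delta_g,\kappa]e^{-is\Delta_g}\beta(P/\la)(Rf)$. Cauchy--Schwarz on each $I_j$, finite overlap, and \eqref{eq:comm-sm} then bound the square sum by $\la^{1/2}\|Rf\|_2\lesssim_N\la^{-N}\|f\|_2$. The other pieces have kernels supported near $\operatorname{supp}\nabla\kappa$ by finite propagation speed, so they are localized on the input side and your argument handles them. These are the $\tilde M$-side comparison error (the long-time part of $\tilde\beta(\tilde P/\la)$ applied to the cut-off input) and the short-time off-frequency composition.

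\paragraph{Smaller corrections.} First, the small operator cannot be ``pulled outside the time integral'', since it sits between two different propagators. Second, you only need smoothing on the input side, on $M$. On the output side, Cauchy--Schwarz over $|I_j|\lesssim1$ together with unitarity of $e^{is\Delta_{\tilde g}}$ suffices, which is what the paper does. This matters because \eqref{eq:local-sm-nontrap} is a high-frequency estimate and is not available for $2^k\lesssim1$, for example $k=0$, so invoking ``the two smoothing estimates'' with gain in $2^k$ would not be justified there.
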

\begin{proof}
Extend $\beta$ evenly and use the same smooth time cutoff $\vartheta$ as in the proof of \eqref{tailpart}. Write
\[
\beta(P/\la)=(2\pi)^{-1}\int\vartheta(t)\la\widehat\beta(\la t)\cos(tP)\,dt+R,
\qquad
R=(2\pi)^{-1}\int(1-\vartheta(t))\la\widehat\beta(\la t)\cos(tP)\,dt.
\]
Here $\vartheta$ is chosen with support small enough for the metrics to agree on the required neighborhood of $\operatorname{supp}[\Delta_g,\kappa]$. The finite propagation argument is the same as in \eqref{tailpart}. For example, if $\ln(2^{k_1}/\la)<-10$, the remainder satisfies
\begin{equation}\label{goal2}
\begin{split}
&\sum_j\int_{I_j}\|\beta_{k_1}(\tilde P)[\Delta_g,\kappa]R\tilde\beta(P/\la)
a_j(s)e^{-is\Delta_g}f\|_{L^2(\tilde M)}^2\,ds\\
&\qquad\lesssim\la\|R\tilde\beta(P/\la)f\|_{L^2(M)}^2
\lesssim_N\la^{-N}\|f\|_{L^2(M)}^2.
\end{split}
\end{equation}
We used \eqref{eq:comm-sm}, finite overlap, and the fact that $R$ commutes with the original propagator. The spectral multiplier of $R\tilde\beta(P/\la)$ is rapidly decreasing in $\la$. For the short-time pieces, the off-frequency pseudodifferential composition has arbitrarily negative order. Its kernel is localized near $\operatorname{supp}[\Delta_g,\kappa]$, so the same local smoothing argument applies to each localized derivative. This proves \eqref{eq:guess-2}, also when $2^k\gg\la$; the output remainder is treated in the same way. This is the off-frequency argument in \cite[Lemma 2.5]{HSTZ} with the two propagators kept on their respective manifolds.

For \eqref{eq:guess-22}, choose $\psi$ equal to one on the commutator support and supported away from the background trapped set. The adjoint of \eqref{eq:local-sm-nontrap} bounds the time integral by
\[
C(2^k)^{-1/2}\|[\Delta_g,\kappa]\beta(P/\la)a_j(s)e^{-is\Delta_g}f\|_{L^2(I_j\times M)}.
\]
Squaring, summing, and using \eqref{eq:comm-sm} gives $(2^k)^{-1/2}\la^{1/2}\|f\|_2\lesssim\|f\|_2$. None of these estimates differentiates $a_j$.
\end{proof}

We apply this lemma with $\kappa=\chi_0$. By truncating each time weight to $[t_j^-,t]$ and approximating by smooth weights, the same bounds hold for partial integrals. The truncation endpoint can be chosen separately for each $j$. Summing the finitely many near-frequency terms and the rapidly decreasing off-frequency terms gives
\begin{equation}\label{eq:global-stri-diag}
\left(\sum_j\sup_{t\in I_j}\left\|\int_{t_j^-}^t
e^{is\Delta_{\tilde g}}\alpha_j(s)[\Delta_g,\chi_0]e^{-is\Delta_g}f_\la\,ds
\right\|_{L^2(\tilde M)}^2\right)^{1/2}
\lesssim\|f\|_{L^2(M)}.
\end{equation}

To prove the bounds for the ``$R'$-terms'' in \eqref{3.14q}, we state the following:
\begin{equation}\label{chi03.15}
\|H_\la^{-1}h\|_{L^q(\tilde M)}
\lesssim\la^{\mu(q)-1}L^{1/2}\|h\|_{L^2(\tilde M)}.
\end{equation}
Near frequency $\la$, this follows from Lemma~\ref{hyperbolicspec}, the Cauchy--Schwarz inequality and $L^2$ orthogonality. Away from this frequency range, the two-dimensional Sobolev estimate gives a smaller bound.

Notice that $\delta L\leq1$, so
\[
\int_{I_j}e^{at}\left|\frac{d}{dt}\bigl(e^{-at}\widehat\rho(\la\delta t)\bigr)\right|\,dt=O(1).
\]
Thus, by Minkowski's integral inequality,
\[
\|R'_{j,w,\la}f\|_{L^2(\tilde M)}
\lesssim\la\delta\sup_{t\in I_j}\|K_{j,w}(t)\|_{L^2(\tilde M)}.
\]
By \eqref{eq:global-stri-diag} and \eqref{chi03.15},
\[
\left(\sum_j\|\chi_1H_\la^{-1}R'_{j,w,\la}f\|_{L^q(\tilde M)}^2\right)^{1/2}
\lesssim\la^{\mu(q)-1}L^{1/2}\la\delta\|f\|_{L^2(M)}
=\la^{\mu(q)}\delta L^{1/2}\|f\|_{L^2(M)}.
\]

For the time cutoff terms, unitarity and $|\alpha_j'|\lesssim a$ give
\[
\sup_{t\in I_j}\|K_{j,v}(t)\|_{L^2(\tilde M)}
\lesssim a^{1/2}\|\chi_0u\|_{L^2(I_j\times M)}.
\]
The same Cauchy--Schwarz estimate applies to $S_{j,v,\la}$. Hence, by \eqref{eq:local-sm-log-M},
\begin{equation}\label{eq:core-time-terms}
\begin{split}
&\left(\sum_j\|R'_{j,v,\la}f\|_2^2\right)^{1/2}
+\left(\sum_j\|S_{j,v,\la}f\|_2^2\right)^{1/2}\\
&\qquad\lesssim\la\delta a^{1/2}\|\chi_0u\|_{L^2(\mathbb R\times M)}
\lesssim\la\delta\|f\|_2.
\end{split}
\end{equation}
Applying \eqref{chi03.15} proves the required bound for these two terms.

To estimate the remaining ``$S$-term'', we use the following two-sided localized resolvent estimate, \cite[Proposition 2.8]{HSTZ}.
\begin{proposition}\label{keyb}
Let $\tilde M$ be an even asymptotically hyperbolic surface with negative curvature, $\chi_1\in C_0^\infty(\tilde M)$, and $\tilde\chi_1\in C_0^\infty(\tilde M)$ supported away from the trapped set. Then, for $6\leq q<\infty$ and $\la\gg1$,
\begin{equation}\label{prop3.7}
\|\chi_1(\Delta_{\tilde g}+\la^2+i\la/\log\la)^{-1}(\tilde\chi_1h)\|_{L^q(\tilde M)}
\lesssim\la^{\mu(q)-1}\|h\|_{L^2(\tilde M)}.
\end{equation}
\end{proposition}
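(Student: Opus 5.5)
Since Proposition~\ref{keyb} is quoted from \cite[Proposition 2.8]{HSTZ}, the plan is to reproduce its argument. The only inputs are the negative curvature of $\tilde M$ and the even asymptotically hyperbolic structure, both of which were checked in Section 2.

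Put $\varepsilon(\la)=\la/\log\la$ and $R=(\Delta_{\tilde g}+\la^2+i\varepsilon(\la))^{-1}$. By the spectral theorem we split $R=R_{\mathrm{near}}+R_{\mathrm{far}}$ according to whether $\tilde P\in[\la-1,\la+1]$ or not.

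\textbf{Far part.} On the complement of $[\la-1,\la+1]$ the multiplier is bounded by $|\tau^2-\la^2|^{-1}\lesssim (\la|\tau-\la|+\tau^2)^{-1}$. Decompose dyadically in $|\tau-\la|$ and apply Lemma~\ref{hyperbolicspec} to windows of width $2^m$, covering each by unit windows and using $L^2$ orthogonality. This gives an $L^2\to L^q$ bound of $\la^{\mu(q)-1}$ times a convergent geometric sum in $m$. The high frequencies $\tau\gg\la$ are handled by Sobolev embedding in two dimensions. The cutoffs are not needed for this part.

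\textbf{Near part.} Here the bound $|\tau^2-\la^2+i\varepsilon|^{-1}\le \log\la/\la$ with Lemma~\ref{hyperbolicspec} would lose $(\log\la)^{1/2}$, so the two cutoffs must be used. We write the resolvent as a time integral,
\[
R=i\int_0^\infty e^{-t\varepsilon}e^{it(\Delta_{\tilde g}+\la^2)}\,dt ,
\]
and split the integral at $t\approx T=c\,\log\la/\la$.

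For short times $t\le T$ the Schr\"odinger group is estimated by the frequency-localized $TT^*$ argument. Lemma~\ref{hyperbolicspec} with $\delta=1/(\la T)$ yields $\la^{\mu(q)-1}$ without loss.

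For long times, and this is the main obstacle, we use that $\tilde\chi_1$ is supported away from the trapped set. We write $\tilde\chi_1h$ through its outgoing and incoming microlocal components. Along the component whose bicharacteristics escape, the nontrapping local smoothing estimate \eqref{eq:local-sm-nontrap} gives $\|\chi e^{it\Delta_{\tilde g}}\tilde\chi_1\beta\|$ decay. Dually, we must show that
\[
\chi_1\int_T^\infty e^{-t\varepsilon}e^{it(\Delta_{\tilde g}+\la^2)}\tilde\chi_1\,dt
\]
is bounded $L^2\to L^2$ by $O(\la^{-1})$ without a logarithm. This requires the two-sided nontrapping resolvent bound of \cite[Theorem 1.2]{nontrappinglocalsmoothing} applied to $\chi_1 R\tilde\chi_1$. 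That bound gives $O(\la^{-1})$ because one cutoff avoids the trapped set, even though $\chi_1$ may meet it.

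Finally we convert to $L^q$. Apply Lemma~\ref{hyperbolicspec} at unit width to the spectrally near-localized output. The decomposition then becomes $\chi_1 R_{\mathrm{near}}\tilde\chi_1=\mathbf 1_{[\la-1,\la+1]}(\tilde P)\circ(\text{$L^2$-bounded by }\la^{-1})$ modulo far-frequency errors already treated. This gives $\la^{\mu(q)}\cdot\la^{-1}$, which proves \eqref{prop3.7}.
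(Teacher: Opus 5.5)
\textbf{There is a genuine gap.} Your far-frequency part is fine. Both halves of the near part fail, however, and they fail exactly where the logarithm lives.

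\textbf{Short times.} Take Schr\"odinger time $t\le T=c\log\la/\la$, i.e.\ wave time up to about $c\log\la$. There the multiplier has size $\approx T$ on a window of width $(\la T)^{-1}\approx(\log\la)^{-1}$ around $\la$. So Lemma~\ref{hyperbolicspec} with $\delta=(\la T)^{-1}$ gives
\[
\la^{\mu(q)}(\la T)^{-1/2}\cdot T=\la^{\mu(q)-1}(\la T)^{1/2}\approx\la^{\mu(q)-1}(\log\la)^{1/2},
\]
not $\la^{\mu(q)-1}$. Multiplier bounds cannot repair this, because they do not see the cutoffs. Even with the $L^2$ gain $\|\mathbf 1_I(\tilde P)\tilde\chi_1h\|_2\lesssim|I|^{1/2}\|h\|_2$, the $(\la^2-\tau^2)^{-1}$ tail from the endpoint $t=0$ costs $\la^{\mu(q)-1}$ for each dyadic shell $(\log\la)^{-1}\lesssim|\tau-\la|\lesssim1$. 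That is a $\log\log\la$ loss.

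\textbf{Long times.} Here \cite[Theorem 1.2]{nontrappinglocalsmoothing} gives the lossless $O(\la^{-1})$ bound only when both cutoffs avoid the trapped set, which is how it is used in Section 2. When $\chi_1$ meets the trapped set you should expect a loss: a packet sent from $\operatorname{supp}\tilde\chi_1$ toward the trapped set can remain in $\operatorname{supp}\chi_1$ for wave time $\sim\log\la$. More decisively, even an $L^2$ bound $\|\chi_1R\tilde\chi_1\|_{2\to2}\lesssim\la^{-1}$ could not be converted to $L^q$ as you propose. The cutoff $\chi_1$ does not commute with $\mathbf 1_{[\la-1,\la+1]}(\tilde P)$, and once it is discarded the $L^2$ factor is $\|\mathbf 1_{[\la-1,\la+1]}(\tilde P)R\tilde\chi_1h\|_2$. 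For a packet at frequency $\la$ in $\operatorname{supp}\tilde\chi_1$, the spectral density near $\la$ is $\sim\|h\|_2^2$, and $\int|\la^2-\tau^2+i\la/\log\la|^{-2}\,d\tau\approx\la^{-2}\log\la$. So by the spectral theorem this factor is of size $\la^{-1}(\log\la)^{1/2}\|h\|_2$. The factorization in your final step is therefore false; it reproduces exactly the loss in \eqref{chi03.15}.

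\textbf{The missing idea.} You need to use the cutoffs at the level of kernels, with the time decomposition organized by wave time. This is the argument of \cite[Proposition 2.8]{HSTZ}, reproduced in the proof of Proposition~\ref{keyeuc}.

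Write the resolvent as $\frac{1}{i(\la+i(\log\la)^{-1})}\int_0^\infty e^{it\la-t/\log\la}\cos(t\tilde P)\,dt$ and split it into dyadic pieces $T_j$ with $t\sim2^j$. Also split $\beta(\tilde P/\la)\tilde\chi_1=A_++A_-+R$ into outgoing and incoming zero-order pieces as in \eqref{eq:lem2.8}. Then treat three ranges.

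For $2^j\lesssim1$, the Hadamard parametrix and the Stein--H\"ormander oscillatory integral bounds give $\la^{\mu(q)-1}2^{j/2}$, which sums.

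For $1\lesssim2^j\le c_0\log\la$ (exactly the range where your spectral argument loses), lift to the universal cover, which has no conjugate points in negative curvature. Only $O(\la^{Cc_0})$ deck transformations contribute, and the outgoing property \eqref{6a} of $A_+$ shows each lifted kernel is $O(\la^{-N})$. This is where both cutoffs are genuinely used.

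A spectral argument enters only for $2^j\ge c_0\log\la$. There the multiplier is $O(\la^{-1}\log\la\,(1+|k|)^{-N})$ on $E_{\la,k}=\mathbf 1_{[\la+k/\log\la,\la+(k+1)/\log\la)}(\tilde P)$. Lemma~\ref{hyperbolicspec} at $\delta=(\log\la)^{-1}$, combined with the gain $\|E_{\la,k}\tilde\chi_1h\|_2\lesssim(\log\la)^{-1/2}\|h\|_2$, cancels the $\log\la$. That gain is the analogue of Lemma~\ref{loc} on $\tilde M$, and it is where nontrapping at $\operatorname{supp}\tilde\chi_1$ enters.

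The incoming part $A_-$ is handled through the conjugate resolvent $(\Delta_{\tilde g}+(\la-i(\log\la)^{-1})^2)^{-1}$ and the difference of the two resolvents.

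The roles are thus the reverse of your plan. The $L^2$ gain from the right cutoff handles the long times, while the outgoing geometry of the cutoffs handles the intermediate times.
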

Since $\chi_0=1$ near $M_0$, $S_{j,w,\la}f$ is supported away from both trapped sets. By \eqref{eq:comm-sm},
\[
\left(\sum_j\|S_{j,w,\la}f\|_{L^2(\tilde M)}^2\right)^{1/2}
\lesssim\la\delta a^{-1/2}\|[\Delta_g,\chi_0]u\|_{L^2(\mathbb R\times M)}
\lesssim\la\delta L^{1/2}\|f\|_2.
\]
Thus, \eqref{prop3.7} gives the remaining bound in \eqref{3.14q}. Combining the four estimates proves \eqref{3.3}.

\subsection{Estimate on support of $\chi_c$}
Let $u_j=\alpha_j(t)\chi_ce^{-it\Delta_g}f_\la$. Then
\[
(i\partial_t-\Delta_g)u_j=v_j+w_j,
\]
where
\begin{equation}\label{eq:def-ujvj}
v_j=i\alpha_j'(t)\chi_cu,\qquad
w_j=-\alpha_j(t)[\Delta_g,\chi_c]u.
\end{equation}
We use the same intervals and notation $L=\log\la$, $a=\la/L$ as above, and now put $H_\la=\Delta_g+\la^2+ia$. Define
\[
K_{j,v}(t)=\int_{t_j^-}^t e^{is\Delta_g}\alpha_j'(s)\chi_cu(s)\,ds,
\qquad
K_{j,w}(t)=\int_{t_j^-}^t e^{is\Delta_g}\alpha_j(s)[\Delta_g,\chi_c]u(s)\,ds.
\]
Then $u_j=e^{-it\Delta_g}(K_{j,v}+iK_{j,w})$. The complete solution $u_j$ vanishes at both endpoints of $I_j$. As in \eqref{eq:ibp-core}, integration by parts gives
\begin{equation}\label{eq:ibp-exterior}
\begin{split}
\la\delta\int_{I_j}e^{-it\la^2}\widehat\rho(\la\delta t)u_j(t)\,dt
={}&-iH_\la^{-1}(R'_{j,v,\la}f+S_{j,v,\la}f)\\
&+H_\la^{-1}(R'_{j,w,\la}f+S_{j,w,\la}f),
\end{split}
\end{equation}
where
\begin{align*}
R'_{j,v,\la}f&=\la\delta\int_{I_j}e^{-itH_\la}
\frac{d}{dt}(e^{-at}\widehat\rho(\la\delta t))K_{j,v}(t)\,dt,\\
S_{j,v,\la}f&=\la\delta\int_{I_j}e^{-it\la^2}\widehat\rho(\la\delta t)
\alpha_j'(t)\chi_ce^{-it\Delta_g}f_\la\,dt,\\
R'_{j,w,\la}f&=\la\delta\int_{I_j}e^{-itH_\la}
\frac{d}{dt}(e^{-at}\widehat\rho(\la\delta t))K_{j,w}(t)\,dt,\\
S_{j,w,\la}f&=\la\delta\int_{I_j}e^{-it\la^2}\alpha_j(t)\widehat\rho(\la\delta t)
[\Delta_g,\chi_c]e^{-it\Delta_g}f_\la\,dt.
\end{align*}
Here the boundary terms of the two sources cancel in their combination, not separately.

Let us fix $\chi_1\in C_0^\infty(M)$ such that $\chi_1=1$ on a neighborhood of $\operatorname{supp}\chi_c$. Then $u_j=\chi_1u_j$. Since there are $O((\delta\log\la)^{-1})$ intervals meeting the time support of $\widehat\rho(\la\delta t)$, by the Cauchy--Schwarz inequality, we would obtain \eqref{3.5} if we could show
\begin{multline}\label{3.13}
\left(\sum_j\|\chi_1H_\la^{-1}R'_{j,v,\la}f\|_{L^q(M)}^2\right)^{1/2}
+\left(\sum_j\|\chi_1H_\la^{-1}S_{j,v,\la}f\|_{L^q(M)}^2\right)^{1/2}\\
\lesssim\la^{\mu(q)}\delta L^{1/2}\|f\|_{L^2(M)},
\end{multline}
and
\begin{multline}\label{3.14}
\left(\sum_j\|\chi_1H_\la^{-1}R'_{j,w,\la}f\|_{L^q(M)}^2\right)^{1/2}
+\left(\sum_j\|\chi_1H_\la^{-1}S_{j,w,\la}f\|_{L^q(M)}^2\right)^{1/2}\\
\lesssim\la^{\mu(q)}\delta L^{1/2}\|f\|_{L^2(M)}.
\end{multline}

We first recall the sharp spectral projection estimates in logarithmic scale on manifolds with nonpositive curvature and bounded geometry from \cite[Theorem 1.6]{HSTZ}. Notice that $M$ satisfies the assumptions in Lemma~\ref{bgsp}.
\begin{lemma}\label{bgsp}
Suppose that $(M,g)$ is a complete surface of uniformly bounded geometry and nonpositive curvature. Then, for $\la\gg1$,
\begin{equation}\label{i.6}
\|\mathbf{1}_{[\la,\la+(\log\la)^{-1}]}(P)\|_{L^2(M)\to L^q(M)}
\lesssim\la^{\mu(q)}(\log\la)^{-1/2},\qquad 6<q<\infty.
\end{equation}
\end{lemma}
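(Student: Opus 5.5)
Lemma~\ref{bgsp} is \cite[Theorem 1.6]{HSTZ} quoted verbatim. The plan has two parts: verify that our surface $M$ meets its hypotheses, and recall the structure of that proof so that it is clear nothing there uses compactness or the form of the ends.

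\textbf{Step 1: check the hypotheses on $M$.} The surface $M=M_0\cup M_\infty$ in Theorem~\ref{specthm} satisfies the following.
\begin{itemize}
\item It has nonpositive curvature by assumption.
\item It is complete, since each end is $(r_0,\infty)\times\mathbb S^1$ with $g=dr^2+r^2h(\theta,d\theta,r^{-1})$.
\item It has uniformly bounded geometry. On $M_0$ this follows from compactness. On the ends, $h$ is smooth up to $x=r^{-1}=0$, so in the coordinates $(r,r\theta)$ the metric coefficients and all their derivatives are uniformly bounded. Hence the curvature and its covariant derivatives are bounded; in fact the curvature is $O(r^{-3})$.
\item Its injectivity radius is bounded below. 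Nonpositive curvature rules out conjugate points. The circles $\{r\}\times\mathbb S^1$ have length $\sim r\to\infty$, so short geodesic loops can only occur in a compact region. There the injectivity radius has a positive lower bound by compactness.
\end{itemize}

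\textbf{Step 2: recall the argument of \cite{HSTZ} behind \eqref{i.6}.}
\begin{itemize}
\item Let $T=c\log\la$. Replace the indicator by $\rho(T(\la-P))$, which dominates $\mathbf 1_{[\la,\la+T^{-1}]}(P)$ up to a constant since $|\rho|\ge\tfrac12$ on $[-3,3]$.
\item By $TT^*$, it suffices to bound $\rho^2(T(\la-P))$ from $L^{q'}$ to $L^q$.
\item Split the wave representation into $|t|\le1$ and $1\le|t|\le T$. The local part gives Sogge's universal bound $\la^{2\mu(q)}T^{-1}$; this uses bounded geometry only, via the Hadamard parametrix on unit balls.
\item For the long-time part, lift to the universal cover via the exponential map at each point, which is a covering map since the curvature is nonpositive. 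There the Hadamard parametrix is valid for all $|t|\le T$.
\item Bounded geometry and nonpositive curvature give the kernel bound $\la^{1/2}e^{CT}$ on the lifted kernel. Summing over deck transformations costs at most $e^{CT}$ by volume growth. Taking $c$ small absorbs these losses into a negative power of $\la$. This step is where $q>6$ is needed, so that the $L^1\to L^\infty$ gain dominates after interpolation with the $L^2$ bound.
\end{itemize}

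I expect the main obstacle to be uniformity of the exponential volume growth and of the lifted Hadamard parametrix on a noncompact surface. These are exactly what the bounded geometry hypothesis of \cite{HSTZ} supplies, so Step 1 is the only new input needed here.
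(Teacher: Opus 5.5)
Your proposal is correct and follows the paper's route: the paper proves this lemma only by quoting \cite[Theorem 1.6]{HSTZ} and remarking that $M$ satisfies its hypotheses (complete, uniformly bounded geometry, nonpositive curvature). Your Step 1 spells out that hypothesis check in more detail than the paper does, including the injectivity radius lower bound; your Step 2 only summarizes the cited argument and is not needed for the proof.
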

On $M$, we also have the following lemma. Its proof is the same off-frequency argument as in Lemma~\ref{global1}; the near-frequency bound follows from Cauchy--Schwarz in time and \eqref{eq:local-sm-nontrap-M}.
\begin{lemma}\label{global2}
Suppose $f\in L^2(M)$ and $\la\gg1$. 
Assume $\chi \in C_0^{\infty}(M)$ with $\chi=0$ on a neighborhood of $M_{0}$.
    Let $\bigcup_{j} I_j=\mathbb R$ where the intervals $I_j$ have finite overlap and length $\lesssim 1$, then for any $a_j\in C_0^\infty(I_j)$ with $|a_j|\lesssim 1$ and $|\ln(2^k/\la)|> 10$, we have
\begin{multline}\label{eq:guess-3}
  \left( \sum_{j}  \left\| \int_{I_j} e^{is\Delta_{g}} \beta_{k}(\sqrt{-\Delta_{g}})\chi \beta(\sqrt{-\Delta_g}/\la) a_j(s)e^{-is\Delta_g}f ds \right\|^2_{L^2(M)}\right)^{\frac12}\\
  \lesssim_N     \max\{\la,2^{k}    \}^{-N}\|f\|_{L^2(M)}.
\end{multline} 
In addition, if $|\ln(2^k/\la)|\leq 10$, then
\begin{equation}\label{eq:guess-33}
    \left( \sum_{j} \left\| \int_{I_j} e^{is\Delta_{g}} \beta_{k}(\sqrt{-\Delta_{ g}})\chi\beta(\sqrt{-\Delta_g}/\la) a_j(s)e^{-is\Delta_g}f ds \right\|^2_{L^2(M)}\right)^{\frac12}\lesssim \la^{-\frac12}\|f\|_{L^2(M)}.
\end{equation}
\end{lemma}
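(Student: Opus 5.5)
We prove Lemma~\ref{global2} by repeating the proof of Lemma~\ref{global1}, now with both propagators on $M$. We replace the commutator $[\Delta_g,\kappa]$ by the multiplication operator $\chi$. This is a zeroth-order operator, so it costs a factor $\la^{-1}$ relative to the commutator. The point is that $\chi$ vanishes near $M_0$, so $\operatorname{supp}\chi$ lies in $M_\infty$. By \eqref{assumption3}, this region is disjoint from the trapped set, and the lossless smoothing estimate \eqref{eq:local-sm-nontrap-M} applies on it.

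\emph{Near-frequency case, $|\ln(2^k/\la)|\le 10$.} First use $\|\beta_k(P)\|_{L^2\to L^2}\le1$, so the $\beta_k$ factor can be discarded. Take $\psi\in C_0^\infty(M_\infty)$ with $\psi=1$ on $\operatorname{supp}\chi$, so that $\chi=\psi\chi$. The dual form of \eqref{eq:local-sm-nontrap-M}, with frequency cutoff $\tilde\beta(P/2^k)$ and $2^k\sim\la$, gives for each $j$
\[
\Big\|\int_{I_j}e^{is\Delta_g}\tilde\beta(P/2^k)\psi\,G(s)\,ds\Big\|_{L^2}\lesssim \la^{-1/2}\|G\|_{L^2(I_j\times M)} .
\]
We insert $\tilde\beta(P/2^k)$ for free, since $\tilde\beta=1$ on the support of $\beta_k$. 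We apply this with $G(s)=\chi\beta(P/\la)a_j(s)e^{-is\Delta_g}f$. Squaring and summing over $j$, finite overlap and $|a_j|\lesssim1$ give
\[
\sum_j \|G\|_{L^2(I_j\times M)}^2\lesssim \|\chi e^{-is\Delta_g}\beta(P/\la)f\|_{L^2(\mathbb R\times M)}^2 .
\]
Applying \eqref{eq:local-sm-nontrap-M} once more gives the bound $\la^{-1}\|f\|_2^2$. Altogether we obtain $\la^{-1/2}\cdot\la^{-1/2}\|f\|_2\le\la^{-1/2}\|f\|_2$, which is \eqref{eq:guess-33}. (Alternatively, Cauchy--Schwarz in time on intervals of length $\lesssim1$ replaces the first dual step and also gives $\la^{-1/2}$.) No derivative of $a_j$ is used.

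\emph{Off-frequency case, $|\ln(2^k/\la)|>10$.} The key is that $\beta_k(P)\chi\beta(P/\la)$ is a composition of a multiplication operator between two frequency cutoffs at separated scales. As in Lemma~\ref{global1}, write $\beta(P/\la)=B+R$, where $B$ has a short-time cosine kernel and $R$ is the long-time remainder. The remainder satisfies $\|R\tilde\beta(P/\la)\|\lesssim_N\la^{-N}$. Treat $\beta_k(P)$ similarly at scale $2^k$; the corresponding remainder is $O(2^{-kN})$.

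For the main part, the short-time pieces are semiclassical pseudodifferential operators with essential supports at $|\xi|\sim\la$ and $|\xi|\sim2^k$. The separation $|\ln(2^k/\la)|>10$ makes these supports disjoint, so the composition has $L^2$ operator norm $O(\max\{\la,2^k\}^{-N})$. All error terms commute through the unitary groups. Hence each $j$-term is bounded by $|I_j|\max\{\la,2^k\}^{-N}\|f\|_2$.

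Summing over $j$ is done in $L^2$ of time. Via Cauchy--Schwarz on each $I_j$ and finite overlap, the $j$-terms can be controlled by
\[
\|E\,e^{-is\Delta_g}f\|_{L^2_s(\mathbb R)}
\]
for a rapidly decaying error operator $E$. This still requires a genuine $L^2_{t,x}$ bound on all of $\mathbb R$. For that we write $E=E\psi'$ with compact localization near $\operatorname{supp}\chi$ and use \eqref{eq:local-sm-nontrap-M}, which costs only $\la^{-1/2}$. This proves \eqref{eq:guess-3}.

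\emph{Main obstacle.} The hardest step is the off-frequency summation over infinitely many $j$. An operator-norm bound on each interval alone does not sum, so the localized smoothing estimate must be kept in the argument. I would follow the \cite[Lemma 2.5]{HSTZ} structure exactly, keeping a compact spatial cutoff on the error kernels, which are localized near $\operatorname{supp}\chi$ by finite propagation. The only remaining subtlety is that $\operatorname{supp}\chi$ may be unbounded toward infinity. If so, I would split $\chi=\chi\psi+\chi(1-\psi)$ and handle the far part with the global nontrapping smoothing on the asymptotically Euclidean ends from \cite{Cardoso2002UniformEO}.
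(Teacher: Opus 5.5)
Your proposal is correct and follows the paper's route. The off-frequency bound is the short-time/remainder argument of Lemma~\ref{global1} with both propagators on $M$, with the sum over $j$ closed by \eqref{eq:local-sm-nontrap-M}; for the near-frequency case the paper uses your parenthetical alternative (Cauchy--Schwarz in time on each $I_j$ plus \eqref{eq:local-sm-nontrap-M}), and your dual-smoothing route is also valid and even gives $\la^{-1}$.

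Two minor points. Splitting $\beta_k(P)$ at scale $2^k$ only gains when $2^k\gg\la$; for $2^k\ll\la$ (e.g.\ $k=0$) you should split only $\beta(P/\la)$ and use the smallness of $(1-\tilde\beta(P/\la))\chi\beta(P/\la)$. Your worry about unbounded support is moot, since $\chi\in C_0^\infty(M)$ by hypothesis.
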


To prove the bounds for the ``$R'$-terms'' in \eqref{3.13} and \eqref{3.14}, we state the following:
\begin{equation}\label{3.15}
\|H_\la^{-1}h\|_{L^q(M)}
\lesssim\la^{\mu(q)-1}L^{1/2}\|h\|_{L^2(M)}.
\end{equation}
Near frequency $\la$, this follows from Lemma~\ref{bgsp}, the Cauchy--Schwarz inequality and $L^2$ orthogonality. The remaining frequencies are bounded by the two-dimensional Sobolev estimate. All these bounds are used for a fixed $6<q<\infty$.

By Minkowski's integral inequality and the time-weight bound used above,
\begin{equation}\label{eq:Rv-pointwise}
\|R'_{j,v,\la}f\|_2\lesssim\la\delta\sup_{t\in I_j}\|K_{j,v}(t)\|_2.
\end{equation}
Since $|\alpha_j'|\lesssim a$ and $|I_j|\lesssim a^{-1}$,
\[
\sup_{t\in I_j}\|K_{j,v}(t)\|_2
\lesssim a^{1/2}\|\chi_cu\|_{L^2(I_j\times M)}.
\]
Thus, by finite overlap and \eqref{eq:local-sm-nontrap-M},
\begin{equation}\label{3.16}
\begin{split}
\left(\sum_j\|R'_{j,v,\la}f\|_2^2\right)^{1/2}
&\lesssim\la\delta a^{1/2}\|\chi_cu\|_{L^2(\mathbb R\times M)}\\
&\lesssim\la\delta L^{-1/2}\|f\|_2.
\end{split}
\end{equation}
By \eqref{3.15}, we obtain
\[
\left(\sum_j\|\chi_1H_\la^{-1}R'_{j,v,\la}f\|_{L^q(M)}^2\right)^{1/2}
\lesssim\la^{\mu(q)}\delta\|f\|_2,
\]
which is better than the required bound in \eqref{3.13}.

For the commutator term, apply Lemma~\ref{global1} with both propagators on $M$ and $\kappa=\chi_c$. Its derivatives are supported away from the trapped set. The partial-integral argument used in \eqref{eq:global-stri-diag} gives
\[
\left(\sum_j\sup_{t\in I_j}\|K_{j,w}(t)\|_2^2\right)^{1/2}\lesssim\|f\|_2.
\]
Consequently,
\[
\left(\sum_j\|\chi_1H_\la^{-1}R'_{j,w,\la}f\|_{L^q(M)}^2\right)^{1/2}
\lesssim\la^{\mu(q)-1}L^{1/2}\la\delta\|f\|_2
=\la^{\mu(q)}\delta L^{1/2}\|f\|_2.
\]
Similarly, H\"older's inequality and \eqref{eq:local-sm-nontrap-M} give
\begin{align*}
\left(\sum_j\|\chi_1H_\la^{-1}S_{j,v,\la}f\|_{L^q(M)}^2\right)^{1/2}
&\lesssim\la^{\mu(q)-1}L^{1/2}\la\delta a^{1/2}
\|\chi_cu\|_{L^2(\mathbb R\times M)}\\
&\lesssim\la^{\mu(q)}\delta\|f\|_2.
\end{align*}
This proves \eqref{3.13} and the first part of \eqref{3.14}.

To estimate the second term in \eqref{3.14}, we need the following two-sided localized resolvent estimate.
\begin{proposition}\label{keyeuc}
Let $(M,g)$ be an asymptotically Euclidean surface as above with nonpositive curvature, $\chi_1\in C_0^\infty(M)$, and $\tilde\chi_1\in C_0^\infty(M_\infty)$ supported away from the trapped set. Then, for $6<q<\infty$ and $\la\gg1$,
\begin{equation}\label{3.15a}
\|\chi_1(\Delta_g+\la^2+i\la/\log\la)^{-1}(\tilde\chi_1h)\|_{L^q(M)}
\lesssim\la^{\mu(q)-1}\|h\|_{L^2(M)}.
\end{equation}
\end{proposition}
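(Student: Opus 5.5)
We plan to follow the scheme of \cite[Proposition 2.8]{HSTZ} and adapt it to the asymptotically Euclidean setting. The idea is to write the resolvent as a time integral of the wave or Schrödinger group, split that integral into short and long times, and use the support of $\tilde\chi_1$ away from the trapped set to control the long-time part by local smoothing.

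Set $\varepsilon_\la=\la/\log\la$. Since $\operatorname{Im}(\la^2+i\varepsilon_\la)>0$, we write
\[
(\Delta_g+\la^2+i\varepsilon_\la)^{-1}=-i\int_0^\infty e^{-t\varepsilon_\la}e^{it(\Delta_g+\la^2)}\,dt ,
\]
up to the sign convention. We then insert a smooth time cutoff $\eta(t)$ supported where $t\lesssim1/\la$ (or at the wave scale $t\lesssim1$), which splits the resolvent into a local piece and a tail.

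\textbf{Local piece.} The local piece is a function $m(P)$ with $|m(\tau)|\lesssim \la^{-1}(1+|\tau-\la|)^{-1}$ near $\tau\approx\la$, and it decays rapidly away from that region. Because $M$ has bounded geometry, the unit-window Sogge estimate gives
\[
\|\mathbf 1_{[k,k+1]}(P)\|_{L^2\to L^q}\lesssim k^{\mu(q)}.
\]
This follows from the Hadamard parametrix with finite propagation speed. Summing over unit windows with the weights $\la^{-1}(1+|k-\la|)^{-1}$ does not close by itself: Cauchy--Schwarz produces a $(\log)$-free sum only if we use the $TT^*$ or orthogonality structure. The standard way around this, as in \cite{HSTZ}, is to realize the local piece directly as an operator with kernel localized to distance $O(1)$. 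Its $L^2\to L^q$ bound is then $\la^{\mu(q)-1}$, since this operator is the local parametrix of the resolvent.

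\textbf{Tail.} For the long-time tail we use that $\tilde\chi_1$ is supported in $M_\infty$ away from the trapped set. There we have the nontrapping resolvent bound $\|\tilde\chi_1(\Delta_g+\la^2\pm i0)^{-1}\tilde\chi_1\|\lesssim\la^{-1}$ from \cite{Cardoso2002UniformEO} and \cite{nontrappinglocalsmoothing}, and the one-sided version
\[
\|\chi_1 R(\la)\tilde\chi_1\|\lesssim\la^{-1}(\log\la)^{1/2}
\]
from \eqref{eq:res-log-M}, combined with the estimate of order $\la^{-1}\log\la$ by the usual $TT^*$ interpolation. Writing the tail as $e^{-t\varepsilon_\la}$-weighted $L^2$ time integrals, we apply \eqref{eq:local-sm-nontrap-M} on the $\tilde\chi_1$ side and \eqref{eq:local-sm-log-M} on the $\chi_1$ side. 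This places the output in $L^2_t$ on time intervals of length $1/a$ with $a=\varepsilon_\la$. We then convert to $L^q$ using Lemma~\ref{bgsp} (this is where nonpositive curvature and $q>6$ enter) together with the time decomposition $\alpha_j$ already used in \eqref{3.13}--\eqref{3.14}. The log-scale projection bound $\la^{\mu(q)}(\log\la)^{-1/2}$ absorbs the $(\log\la)^{1/2}$ loss coming from \eqref{eq:local-sm-log-M} on the $\chi_1$ side. On the $\tilde\chi_1$ side, the lossless smoothing $\la^{-1/2}$ yields exactly the factor $\la^{\mu(q)-1}$.

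\textbf{Main obstacle.} We expect the hardest step to be the precise bookkeeping in the tail. The argument must show that the two-sided structure (one cutoff in the nontrapping region) removes exactly one factor of $(\log\la)^{1/2}$ compared with \eqref{3.15}. If the direct time-splitting is too lossy, our first fallback is a $TT^*$ argument on $\chi_1(\Delta_g+\la^2+i\varepsilon_\la)^{-1}\tilde\chi_1$. This reduces the problem to the $L^2$ resolvent bounds above and to the $L^{q'}\to L^q$ bound for $\chi_1\,\mathrm{Im}(\Delta_g+\la^2+i\varepsilon_\la)^{-1}\chi_1$. The latter is controlled by Lemma~\ref{bgsp} through a spectral decomposition into windows of width $(\log\la)^{-1}$.
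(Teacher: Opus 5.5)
\textbf{There is a genuine gap in your tail argument, at intermediate wave times $1\lesssim t\lesssim\log\la$.} Your short-time piece is essentially the paper's case (i), via the Stein--H\"ormander bound $\la^{\mu(q)-1}2^{j/2}$ for $\la^{-1}\le 2^j\le 1$. The kernel localization there should come from the finite propagation speed of the wave representation \eqref{resolvent}, not from the Schr\"odinger group. The problem is the tail. Every ingredient you use there is an $L^2$ bound (local smoothing, the one-sided resolvent bound, Lemma~\ref{loc}), and your only passage to $L^q$ is Lemma~\ref{bgsp}. With these inputs, the part of the resolvent at wave time $t\sim 2^j$ has multiplier of size $\la^{-1}2^j$ on a spectral window of width $2^{-j}$. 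Lemma~\ref{bgsp} plus orthogonality bounds that window by $\la^{\mu(q)}2^{-j/2}$ from $L^2$ to $L^q$, and the nontrapping cutoff gives another factor $2^{-j/2}$. So each dyadic piece costs exactly $\la^{\mu(q)-1}$, with no decay in $j$, and summing over $1\le 2^j\le\log\la$ gives $\la^{\mu(q)-1}\log\log\la$. Equivalently, on windows $E_{\la,k}$ of width $(\log\la)^{-1}$ the multiplier is $\sim\la^{-1}\log\la\,(1+|k|)^{-1}$. The constraints $\|E_{\la,k}\tilde\chi_1h\|_2\lesssim(\log\la)^{-1/2}\|h\|_2$ and $\ell^2$-orthogonality only give $\sum_k(1+|k|)^{-1}\|E_{\la,k}\tilde\chi_1h\|_2\lesssim(\log\la)^{-1/2}\log\log\la\,\|h\|_2$. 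So your claim that the lossless smoothing ``yields exactly the factor $\la^{\mu(q)-1}$'' holds scale by scale but not after summing the scales.

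Two further problems. Routing the argument through the $\alpha_j$ decomposition of \eqref{3.13}--\eqref{3.14} is circular: the $S_{j,w,\la}$ term there, \eqref{eq:Sw-exterior}, is exactly where this proposition is needed. The $TT^*$ fallback fails as well. With $R=(\Delta_g+\la^2+i\varepsilon_\la)^{-1}$ one has $TT^*=\chi_1R\tilde\chi_1^2R^*\chi_1$. The identity $RR^*=-\varepsilon_\la^{-1}\operatorname{Im}R$ is available only without the middle cutoff, and then it just reproduces the lossy bound \eqref{3.15}.

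\textbf{The missing idea is a dynamical input for those intermediate times, not a sharper $L^2$ bound.} The paper splits $\beta(P/\la)\tilde\chi_1=A_++A_-+R$ into microlocally outgoing and incoming pieces. This is possible because $\Gamma_+$ and $\Gamma_-$ are disjoint over $\operatorname{supp}\tilde\chi_1$. It then uses the convexity \eqref{assumption2} to get \eqref{6a}: the forward flow from $\operatorname{supp}A_+$ leaves $\operatorname{supp}\chi_1$ for good after time $C$. For $10C\le 2^j\le c_0\log\la$, the paper proceeds as follows.
\begin{itemize}
\item It lifts $\cos(tP)$ to the universal cover via Cartan--Hadamard, so nonpositive curvature is used here and not only through Lemma~\ref{bgsp}.
\item It applies the Hadamard parametrix with its $e^{Ct}$ bounds and counts $O(\la^{Cc_0})$ relevant deck transformations.
\item It shows by nonstationary phase that each term is $O(\la^{-N})$. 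This is what removes the $\log\log\la$.
\end{itemize}
Only for $2^j\ge c_0\log\la$ is the spectral argument used. There the combined multiplier is $O(\la^{-1}\log\la\,(1+|k|)^{-N})$, so Lemma~\ref{bgsp} and Lemma~\ref{loc} sum with no loss. The incoming piece $A_-$ is handled by running the same argument for $(\Delta_g+(\la-i(\log\la)^{-1})^2)^{-1}$. It then estimates the difference of the two resolvents, whose multiplier $O(\la^{-1}\log\la\,(1+|k|)^{-2})$ on these windows is again summable.
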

First, by H\"older's inequality and \eqref{eq:comm-sm},
\begin{equation}\label{eq:Sw-exterior}
\begin{split}
\left(\sum_j\|S_{j,w,\la}f\|_{L^2(M)}^2\right)^{1/2}
&\lesssim\la\delta a^{-1/2}\|[\Delta_g,\chi_c]u\|_{L^2(\mathbb R\times M)}\\
&\lesssim\la\delta L^{1/2}\|f\|_2.
\end{split}
\end{equation}
Therefore, \eqref{3.15a} proves the second term of \eqref{3.14}. Now, we prove \eqref{3.15a}.
\begin{proof}[Proof of Proposition~\ref{keyeuc}]
We follow the proof of \cite[Proposition 2.8]{HSTZ}.
In this proof, $n=2$. By a fixed rescaling, we may assume the injectivity radius is greater than $10$. To prove Proposition~\ref{keyeuc}, we show the following version of \eqref{3.15a},
\begin{equation}\label{main}
    \left\|\chi_1\left(\Delta_g+(\la+i(\log\la)^{-1})^2\right)^{-1}\tilde\chi_1\right\|_{L^2(M)\to L^q(M)}\lesssim \la^{\mu(q)-1}.
\end{equation}
The same proof is uniform if $(\log\la)^{-1}$ in \eqref{main} is replaced by $c(\log\la)^{-1}$, with $c$ in a fixed compact subinterval of $(0,\infty)$. Writing $\sqrt{\la^2+i\la/\log\la}=\nu+i\eta$, where $\nu\sim\la$ and $\eta\sim(\log\la)^{-1}$, therefore gives \eqref{3.15a}.
We fix $\beta \in C^\infty_0((1/4,4))$ with $\beta= 1$ in (1/2,2). By Sobolev estimates, we have \begin{equation}\label{p2.5}
    \left\|\chi_1\left(\Delta_g+(\la+i(\log\la)^{-1})^2\right)^{-1}(I-\beta(P/\la))\tilde\chi_1\right\|_{L^2(M)\to L^q(M)}\lesssim \la^{\mu(q)-1}.
\end{equation} So, it suffices to show 
\begin{equation}\label{eq:res-near}
    \left\|\chi_1\left(\Delta_g+(\la+i(\log\la)^{-1})^2\right)^{-1}\beta(P/\la)\tilde\chi_1\right\|_{L^2(M)\to L^q(M)}\lesssim \la^{\mu(q)-1}.
\end{equation}

Notice that we have the following identity
\begin{equation}\label{resolvent}
    \left(\Delta_g+(\la+i\delta)^2\right)^{-1}=\frac1{i(\la+i\delta)}\int_0^\infty e^{it\la-t\delta}\cos (t\sqrt{-\Delta_g})\,dt.
\end{equation}

We recall the analog of \cite[Lemma 2.9]{HSTZ}. The proof of Lemma~\ref{loc} is identical to the proof of \cite[Lemma 2.9]{HSTZ} using the local smoothing on $M$, \eqref{eq:local-sm-nontrap-M}.
\begin{lemma}\label{loc}  {Let} $\mu\in [\la/2,2\la]$, $\la\gg1$, $\delta\in (0,1/2)$ and $\tilde \chi\in C^\infty_0(M_\infty)$. {Then}
\begin{equation}\label{3.19}
\| \mathbf{1}_{[\mu,\mu+\delta)}(\sqrt{-\Delta_g}) (\tilde \chi h)\|_{L^2( M)}\lesssim \delta^{1/2}\|h\|_{L^2( M)}.
\end{equation}
\end{lemma}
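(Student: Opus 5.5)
The plan is to derive the bound from a $TT^*$ argument combined with the local smoothing estimate \eqref{eq:local-sm-nontrap-M}. By $L^2$ duality,
\[
\|\mathbf{1}_{[\mu,\mu+\delta)}(P)(\tilde\chi h)\|_{L^2(M)}^2=\langle \tilde\chi\,\mathbf{1}_{[\mu,\mu+\delta)}(P)\,\tilde\chi h,h\rangle .
\]
So it suffices to show that $\|\tilde\chi\,\mathbf{1}_{[\mu,\mu+\delta)}(P)\,\tilde\chi\|_{L^2\to L^2}\lesssim\delta$.

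First I will dominate the sharp projector by a smooth one. Take $\rho$ as at the start of this section, with $|\rho(s)|\geq 1/2$ for $|s|\leq 3$ and $\widehat\rho$ supported in $[-1,1]$. For $\tau\in[\mu,\mu+\delta)$ we have $|\tau^2-\mu^2|\leq 3\la\delta$, so
\[
\mathbf{1}_{[\mu,\mu+\delta)}(\tau)\leq 4\,|\rho((\la\delta)^{-1}(\tau^2-\mu^2))|^2 .
\]
Hence
\[
\|\mathbf{1}_{[\mu,\mu+\delta)}(P)\tilde\chi h\|_2\lesssim\|\rho((\la\delta)^{-1}(-\Delta_g-\mu^2))\,\beta(P/\la)\,\tilde\chi h\|_2 ,
\]
where $\beta\equiv1$ on $[1/4,4]$ may be inserted because $\mu\sim\la$.

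Next I will express the smoothed projector through the Schrödinger group. By Fourier inversion,
\[
\rho((\la\delta)^{-1}(-\Delta_g-\mu^2))=(2\pi)^{-1}\la\delta\int e^{-it\mu^2}\widehat\rho(\la\delta t)\,e^{-it\Delta_g}\,dt .
\]
Testing against $g\in L^2$ and moving the propagator and $\tilde\chi$ onto $g$, I get
\[
|\langle\,\cdot\,,g\rangle|\lesssim \la\delta\int_{|t|\leq(\la\delta)^{-1}}|\langle h,\tilde\chi e^{it\Delta_g}\beta(P/\la)g\rangle|\,dt .
\]
Cauchy--Schwarz in $t$ bounds this by
\[
\la\delta\,(\la\delta)^{-1/2}\,\|h\|_2\,\|\tilde\chi e^{it\Delta_g}\beta(P/\la)g\|_{L^2_{t,x}} .
\]
By \eqref{eq:local-sm-nontrap-M} applied with the cutoff $\tilde\chi\in C_0^\infty(M_\infty)$ (time reversal is harmless), the last factor is $\lesssim\la^{-1/2}\|g\|_2$. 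The total is $(\la\delta)^{1/2}\la^{-1/2}=\delta^{1/2}$, which is the claimed bound.

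The main point to check is that \eqref{eq:local-sm-nontrap-M} really holds for an arbitrary $\tilde\chi\in C_0^\infty(M_\infty)$ uniformly over the whole time axis. This is exactly the statement recorded in (b), which was justified from \cite[Theorem 1.2]{nontrappinglocalsmoothing} using the convexity \eqref{assumption3} of the ends. A secondary check is that the spectral cutoff $\beta(P/\la)$ in that estimate covers $[\mu,\mu+\delta)$ for $\mu\in[\la/2,2\la]$. This only requires enlarging the support of $\beta$, which is permitted. Everything else is routine.
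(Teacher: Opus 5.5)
Your argument is correct and follows essentially the route the paper takes by deferring to \cite[Lemma 2.9]{HSTZ}: write a smoothed version of the window through $e^{-it\Delta_g}$ over a time interval of length $O((\la\delta)^{-1})$, apply Cauchy--Schwarz in $t$, and close with the local smoothing bound \eqref{eq:local-sm-nontrap-M}, which gives $(\la\delta)^{1/2}\la^{-1/2}=\delta^{1/2}$. Two minor points: for $\mu$ near $2\la$ you only have $0\le\tau^2-\mu^2\le(2\mu+\delta)\delta$, which can exceed $3\la\delta$, so normalize by $(\mu\delta)^{-1}$ instead of $(\la\delta)^{-1}$ (harmless since $\mu\sim\la$); and the opening $TT^*$ reduction is never used, because your duality argument bounds $\mathbf{1}_{[\mu,\mu+\delta)}(P)\tilde\chi$ directly.
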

We also need the following lemma.
\begin{lemma}
    There exist zero-order pseudodifferential operators $A_\pm$ {with compactly supported Schwartz kernel} such that 
{
\begin{equation}\label{eq:lem2.8}
\beta(P/\la)\tilde\chi_1=A_++ A_-+R,
\end{equation} 
}with
$ \|R\|_{L^2(M)\to L^2(M)}\lesssim\la^{-1}.
$
{In local coordinates, $A_{\pm}$ is of the form
\begin{equation}
    A_{\pm} u(x) = (2\pi)^{-n}\la^n\int\int e^{i\la\langle x-y,\xi \rangle} A_{\pm}(x,y,\xi)u(y) dyd\xi,\quad A_{\pm}(x,y,\xi)\text{ smooth and compactly supported}.
\end{equation}
}
For some small $\delta_1>0$ and all $(x,y,\xi)\in \{(x,y,\xi):\textnormal{dist}((x,y,\xi),\textnormal{supp}\,A_+)\leq\delta_1\}$, if $(x(t),\xi(t))=\Phi_t(x,\xi)$, we have 
\begin{equation}\label{6a}
   \textnormal{dist}( x(t), \textnormal{supp}\,\chi_1)\ge 1  \,\,\,\text{ for }\,\,\,t \ge C,
\end{equation}
for some sufficiently large constant C. Similarly, for all $(x, y,\xi)\in \textnormal{supp}\, A_-(x,y,\xi)$,  we have 
\begin{equation}\label{7a}
   \textnormal{dist}( x(t), \textnormal{supp}\,\chi_1)\ge 1 \,\,\,\text{ for }\,\,\,t\le -C.
\end{equation}
\end{lemma}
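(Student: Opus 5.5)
The plan is to follow the microlocal outgoing/incoming decomposition of \cite[Lemma 2.8]{HSTZ}, with the escape properties supplied by the radial convexity \eqref{assumption3} on $M_\infty$ instead of hyperbolic geometry.

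\emph{Reduction to a pseudodifferential operator.} Since $\tilde\chi_1$ is compactly supported in $M_\infty$, I would write $\beta(P/\la)$ through its Fourier transform and use the Hadamard parametrix for $\cos(tP)$ for $|t|$ small. Finite propagation speed together with the rapid decay of $\la\widehat\beta(\la t)$ away from $t=0$ shows that $\beta(P/\la)\tilde\chi_1$ equals a semiclassical pseudodifferential operator $B$ with compactly supported kernel, plus an error that is $O(\la^{-\infty})$ on $L^2$. In local coordinates $B$ has the form $(2\pi)^{-n}\la^n\iint e^{i\la\langle x-y,\xi\rangle}b(x,y,\xi)u(y)\,dy\,d\xi$, with symbol $b(x,y,\xi)\sim\beta(|\xi|_{g(x)})\tilde\chi_1(y)$ modulo $O(\la^{-1})$ symbols. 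Those symbols contribute $O(\la^{-1})$ on $L^2$ and go into $R$. Thus $b$ is supported in $\{|\xi|_g\in[1/4,4]\}$ over a compact subset of $M_\infty$ that avoids the trapped set.

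\emph{Splitting by radial direction.} On $M_\infty$ I use the coordinates $(r,\theta)$ and the dual radial variable $\rho=\xi_r$. Along the flow, $\ddot r=\partial_r f\,f^{-3}\xi_\theta^2\ge0$, since $\partial_r f>0$ by \eqref{assumption3}, exactly as computed in Section~2.
\begin{itemize}
\item Fix a small $c>0$. Take a smooth partition $1=\psi_+(\rho/|\xi|)+\psi_-(\rho/|\xi|)$ with $\psi_+$ supported where $\rho/|\xi|>-c$ and $\psi_-$ supported where $\rho/|\xi|<c$.
\item Set $A_\pm$ to have symbol $b\,\psi_\pm$, so that $B=A_++A_-$.
\item On $\operatorname{supp}A_+$, enlarged by $\delta_1$, we have $\dot r\ge -2c$ at time $0$. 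When $\dot r\ge0$, convexity makes $\dot r$ nondecreasing.
\item When $\dot r<0$ but small, the angular momentum is nearly maximal and $\ddot r\gtrsim r^{-1}$ by \eqref{assumption3}. So $\dot r$ becomes positive within time $O(c\,r)$, after a radial dip of only $O(c^2r)$. I choose $c$ small relative to the distance from $\operatorname{supp}\tilde\chi_1$ to $\{r\le A\}$, so the trajectory stays in $M_\infty$ throughout.
\end{itemize}

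\emph{Leaving $\operatorname{supp}\chi_1$.} Once $\dot r\ge c'>0$, we get $r(t)\ge r(t_0)+c'(t-t_0)$. Since $\operatorname{supp}\chi_1$ is compact, $\operatorname{dist}(x(t),\operatorname{supp}\chi_1)\ge1$ for $t\ge C$, which is \eqref{6a}. All constants are uniform because the enlarged support is compact. The same argument with time reversed, $\rho\mapsto-\rho$, gives \eqref{7a} for $A_-$.

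The main obstacle is the near-tangential case $\dot r\approx0$: there the escape time is not immediate and the lower bound on $\ddot r$ depends on $r$. I would handle it by this compactness and $c$-smallness choice. If the curvature of the cap near $r=A$ becomes a problem, I would instead shrink the $\dot r$-window and place the transitional trajectories in both $A_+$ and $A_-$ through a partition of unity.
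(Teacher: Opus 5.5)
Your argument is correct, but you split $B$ differently from the paper. The paper never uses the radial momentum. It inserts a cutoff $\psi(x)$ equal to one near $\operatorname{supp}\tilde\chi_1$ and vanishing on $M_0$, and sets $S=\{(x,\xi)\in S^*M:x\in\operatorname{supp}\psi\}$. Since $\pi(\Gamma_+\cap\Gamma_-)\subset M_0$, the sets $\Gamma_+\cap S$ and $\Gamma_-\cap S$ are disjoint and compact. The paper then takes $\phi_++\phi_-=1$ on $S$ with $\operatorname{supp}\phi_+\cap\Gamma_-=\emptyset$ and $\operatorname{supp}\phi_-\cap\Gamma_+=\emptyset$. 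Thus \eqref{6a} and \eqref{7a} follow from the definition of $\Gamma_\mp$, compactness, and the qualitative no-return property given by convexity. In that route it does not matter whether a trajectory passes through $M_0$ before escaping.

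Your cutoff in $\rho/|\xi|$ instead gives explicitly radially outgoing and incoming pieces and an explicit escape time. The cost is a quantitative ODE argument. You use $\ddot r\ge \frac{C}{2r}(1-\dot r^2)$, which follows from \eqref{assumption3}, to bound the radial dip of nearly tangential trajectories. You also have to choose $c$ small in terms of $\sup r$ on the support and its distance to $\{r=A\}$, so that trajectories stay in $M_\infty$.

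One step is left implicit: passing from $\dot r\ge0$ to a uniform $\dot r\ge c'$. It follows from the same inequality: $r$ grows at most linearly, so $\dot r$ increases at least logarithmically. Alternatively, it follows from continuity of the flow and compactness.
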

\begin{proof}
If we extend $\beta$ to be an even function, then we can write $\beta(P/\la)=B+C$ where $ \|C\|_{L^2\to L^2}\lesssim_N \la^{-N}$, and $B$ is a pseudodifferential operator with principal symbol $\beta(p(x,\xi))$,
with $p(x,\xi)$ here now being the principal symbol of $P$. 

Next, choose $\psi\in C_0^\infty{(M)}$ with $\psi=1$ in a neighborhood of the support of $\tilde\chi_1$  and $\psi=0$ on $M_0$. Without loss of generality, we may assume both $\psi$ and $\tilde \chi_1$ are supported in a sufficiently small neighborhood of some fixed point $y_0$. Then, in normal coordinates around $y_0$, if $B(x,y)$ is the {Schwartz} kernel of $B$, 
we have $B(x,y)\tilde\chi_1(y)=\psi(x)B(x,y)\tilde \chi_1(y)+O(\la^{-N})$. {Since $B$ has principal symbol $\beta(p(x,\xi))$},
\begin{equation}\label{2.95a}
    B(x,y)\tilde\chi_1(y)=(2\pi)^{-n}\la^n\int e^{i\la\langle x-y,\xi \rangle} \psi(x){\beta(p(x,\xi))}\tilde\chi_1(y)d\xi +R(x,y),
\end{equation}
where $R$ is a lower order {pseudodifferential} operator which satisfies $ \|R\|_{L^2(M)\to L^2(M)}=O(\la^{-1})
$.

{Let $S=\{(x,\xi)\in S^*M: x\in \textnormal{supp}\,\psi \}$. 
Since $\psi(x)=0$ on $M_0$ and $\pi(\Gamma_+\cap \Gamma_-)\subset M_0$, the two sets $\Gamma_+\cap S$ and $\Gamma_-\cap S$ are disjoint. Since $\Gamma_\pm$ are closed and $S$ is compact, $\Gamma_+\cap S$ and $\Gamma_-\cap S$ are both compact. Hence, $\textnormal{dist}(\Gamma_+\cap S,\Gamma_-\cap S)>0.$
there exists $\phi_{\pm}\in C_0^{\infty}(S^*M)$ subordinate to the open cover $S\subset (U\setminus \Gamma_-)\cup (U\setminus \Gamma_+)$ where $U$ is a small neighbourhood of $S$, such that
\begin{equation}
    \phi_+(x,\xi) +\phi_-(x,\xi) =1,\quad (x,\xi)\in S,
\end{equation}
with $\textnormal{supp}\,\phi_+ \cap \Gamma_-=\emptyset$ 
 and $\textnormal{supp}\,\phi_- \cap \Gamma_+=\emptyset$. If we define the operators $A_\pm$ by 
\begin{equation}\label{10}
   A_\pm f(x)=(2\pi)^{-n}\la^n\int e^{i\la\langle x-y,\xi \rangle} \phi_{\pm}(x,\xi/|\xi|_g)\psi(x)\beta(p(x,\xi))\tilde\chi_1(y) f(y)d\xi dy,
\end{equation}
then we obtain \eqref{eq:lem2.8}. Moreover, by our assumption, \eqref{assumption2}, we have the convex geodesic flow on $M_\infty$. So,  $(x,\xi)\in \textnormal{supp} \, \phi_{\pm}$ satisfies \eqref{6a} and \eqref{7a} for sufficiently large $C$, respectively.
}
\end{proof}
Meanwhile, notice that
\begin{equation}\label{bbound}
    \|A_\pm\|_{L^p(M)\to L^p(M)}=O(1),\,\,\,\forall\,\,\,1\le p\le \infty.
\end{equation}

To prove \eqref{eq:res-near}, it suffices to show 
\begin{equation}\label{01}
    \left\|\chi_1\left(\Delta_g+(\la+i(\log\la)^{-1})^2\right)^{-1}\circ A\right\|_{L^2(M)\to L^q(M)}\lesssim \la^{\mu(q)-1}, \quad \text{ for } A=A_+, A_-.
\end{equation}
{{
Meanwhile, by \eqref{3.15} along with the fact that $\|R\|_{L^2(M)\to L^2(M)}\lesssim\la^{-1}$, we have
\begin{equation}\label{n12b}
\begin{aligned}
       \left\|\left(\Delta_g+(\la+i(\log\la)^{-1})^2\right)^{-1} R \,(h)\right\|_{ L^q(M)}&\lesssim \la^{\mu(q)-1}(\log\la)^{\frac12}\|R( h)\|_{L^2( M)}\\
       &\lesssim 
   \la^{\mu(q)-2}(\log\la)^{\frac12} \| h\|_{L^2( M)},
\end{aligned}
\end{equation}
which is better than the required estimate in \eqref{eq:res-near}.}}

Let us first prove \eqref{01} for $A=A_+$.  
We fix $\gamma\in C_0^\infty ((1/2, 2))$ satisfying $\sum_{j=-\infty}^\infty \gamma(s/2^j)=1$, and 
define 
\begin{equation}\label{tj}
T_j f=\frac1{i(\la+i(\log\la)^{-1})}\int_0^\infty \gamma(2^{-j}t)e^{it\la-t/\log\la}\cos (t\sqrt{-\Delta_g})f\,dt.
\end{equation}
Then, it suffices to estimate the $L^2(M)\to L^q(M)$ bounds for the $T_j$ operators.  The symbol of $T_j$ is 
\begin{equation}\label{symbol}
    T_j(\tau)=\frac1{i(\la+i(\log\la)^{-1})}\int_0^\infty \gamma(2^{-j}t)e^{it\la-t/\log\la}\cos (t\tau)\,dt=O(\la^{-1}2^j(1+2^j|\tau-\la|)^{-N}).
\end{equation}


Note that $(\Delta_g+(\la+i(\log\la)^{-1})^2)^{-1}=\sum_{-\infty}^\infty T_j$.  To prove the proposition, we may separately consider the cases when
$2^j\lesssim  1$, $1\lesssim  2^j\lesssim \log \la$ and $\log\la \lesssim 2^j$.

\noindent(i) $2^j\le 10C$ for $C$ as in \eqref{7a}.

First, if $2^j\in[\la^{-1},1]$, we will show that for
$q>6$,
\begin{equation}\label{tjbound1}
    \|T_j\|_{L^2(M)\to L^q(M)}
    \lesssim
    \la^{\mu(q)-1}2^{j/2}.
\end{equation}

Notice that $M$ has nonpositive curvature. By the Hadamard parametrix
for $\cos(t\sqrt{-\Delta_g})$, if $\la^{-1}\leq2^j\leq1$, then the
kernel of $T_j$ satisfies
\begin{equation}
T_j(x,y)
=
\begin{cases}
\la^{\frac{n-3}{2}}
2^{-j\frac{n-1}{2}}
e^{i\la d_g(x,y)}a_\la(x,y),
&
d_g(x,y)\in[2^{j-2},2^{j+2}],
\\
O\bigl(\la^{-1}2^{-j(n-1)}\bigr),
&
d_g(x,y)\leq2^{j-2},
\end{cases}
\end{equation}
where
\[
|\nabla_{x,y}^{\alpha}a_\la(x,y)|
\leq
C_\alpha d_g(x,y)^{-|\alpha|}.
\]
Additionally, by the finite propagation speed property of the wave
propagator,
\[
T_j(x,y)=0
\qquad\text{if}\qquad
d_g(x,y)\geq2^{j+2}.
\]

If
$d_g(x,y)\in[2^{j-2},2^{j+2}]$, the bound in
\eqref{tjbound1} follows from the oscillatory integral bounds of
H\"ormander \cite{HormanderIII} and Stein \cite{Stein}, combined
with a scaling argument. Indeed, putting $r=2^j$, the oscillatory
parameter after rescaling is $\la r$, and the standard
$L^2\to L^q$ oscillatory integral estimate gives
\begin{align*}
\|T_j\|_{L^2(M)\to L^q(M)}
&\lesssim
\la^{\frac{n-3}{2}}
r^{-\frac{n-1}{2}}
r^{\frac n2+\frac nq}
(\la r)^{-\frac nq}
\\
&=
\la^{\frac{n-3}{2}-\frac nq}r^{1/2}
\\
&=
\la^{\mu(q)-1}2^{j/2}.
\end{align*}

For $d_g(x,y)\leq2^{j-2}$, Young's inequality and the second
kernel bound give
\begin{align*}
\|T_j\|_{L^2(M)\to L^q(M)}
&\lesssim
\la^{-1}
2^{-j(n-1)}
2^{jn(\frac12+\frac1q)}
\\
&=
\la^{-1}
2^{-j\mu(q)}
2^{j/2}
\\
&\lesssim
\la^{\mu(q)-1}2^{j/2},
\end{align*}
where we used $\la2^j\geq1$ and $\mu(q)\geq0$.
Thus, \eqref{tjbound1} follows.

The finitely many terms with $1\leq2^j\leq10C$ can be handled by
the same local argument on the universal cover, summing the finitely many relevant deck transformations, with constants depending on $C$.

On the other hand, suppose $2^j\leq\la^{-1}$. Since we only need
to estimate the resolvent composed with $A$, recall that $A$ is
microlocalized to frequencies comparable to $\la$. Hence, if
$\tilde\beta\in C_0^\infty((1/4,4))$ equals one on the frequency
support of $A$, then
\[
A=\tilde\beta(P/\la)A+O(\la^{-N})_{L^2\to H^s},\qquad s\geq0\text{ fixed}.
\]
Also, by the spectral theorem and the definition of $T_j$,
\[
\left\|
\sum_{\{j:2^j\leq\la^{-1}\}}T_j
\right\|_{L^2(M)\to L^2(M)}
\lesssim
\la^{-2}.
\]
Therefore, by semiclassical Sobolev estimates,
\begin{align*}
\left\|
\sum_{\{j:2^j\leq\la^{-1}\}}T_j\circ A
\right\|_{L^2(M)\to L^q(M)}
&\lesssim
\la^{n(\frac12-\frac1q)}
\la^{-2}
\\
&=
\la^{\mu(q)-\frac32}
\\
&\lesssim
\la^{\mu(q)-1}.
\end{align*}
This proves the desired bound in case (i).

\noindent(ii) $2^j\ge c_0\log\la$ for some small enough $c_0$.

Let
$E_{\la,k}=\mathbf{1}_{[\la+k/\log\la, \la +(k+1)/\log\la)}(\sqrt{-\Delta_g}).$ 
Then by integration by parts in $t$-variable, the symbol of 
$$S_k=\frac1{i(\la+i(\log\la)^{-1})}E_{\la,k}\int_0^\infty e^{it\la-t/\log\la}\cos (tP)\,\sum_{2^j\ge c_0\log\la}\gamma(2^{-j}t)\,dt
$$
is $O\left(\la^{-1}\log\la (1+|k|)^{-N}\right )$. Thus, by the sharp spectral projection bound in Lemma~\ref{bgsp},
\begin{align*}
\|&\mathbf{1}_{[\la/2,2\la]}( \sqrt{-\Delta_g}) \, 
 \sum_{|k|\lesssim \la\log\la}(
 S_k\circ A h)\|_{L^q( M)}
\\
&\le \sum_{|k|\lesssim \la\log\la}
\|\mathbf{1}_{[\la/2,2\la]}( \sqrt{-\Delta_g}) \,(S_k\circ A h)\|_{L^q( M)}
\\
&\le  \la^{\mu(q)}(\log\la)^{-1/2}  \sum_{|k|\lesssim \la\log\la}
\| \mathbf{1}_{[\la/2,2\la]}( \sqrt{-\Delta_g}) \, (S_k\circ A h)\|_{L^2( M)}
\\
&\lesssim \la^{\mu(q)}(\log\la)^{-1/2}\sum_{|k|\lesssim \la\log\la} (1+|k|)^{-N} \la^{-1}\log\la  \| E_{\la,k} \circ (A h)\|_{L^2( M)}
\\
&\lesssim \la^{\mu(q)-1}  \|h\|_{L^2( M)},
\end{align*}
 using \eqref{3.19} with $\delta=(\log\la)^{-1}$, \eqref{10} and \eqref{bbound} in the last step.
The case when $|k|\gtrsim \la\log\la$ can be handled by Sobolev estimates.

\noindent(iii) $10C \le 2^j\le c_0\log\la$ for $C$ as in \eqref{7a}.

By duality, it suffices to show that
 the operator
\begin{equation}\label{20}
  T= \sum_{\{j: 10C\le 2^j\le c_0\log\la\}}\frac i{(\la-i(\log\la)^{-1})}\int_0^\infty \gamma(2^{-j}t)e^{-it\la-t/\log\la}   A^*\circ \cos (t\sqrt{-\Delta_g})\chi_1\,dt
\end{equation}
satisfies the same $L^{q'}(M)\to L^2(M)$ bound as in  \eqref{01}.

Since $(M,g)$ has nonpositive sectional curvature, we may use the 
Cartan-Hadamard theorem to lift the calculation up to the universal
cover of $(M,g)$ using the formula
from \cite[(3.6.4)]{SoggeHangzhou}
\begin{equation}\label{k.3}
    (\cos t\sqrt{-\Delta_{g}})(x,y)
=\sum_{\alpha\in \Gamma}
(\cos t\sqrt{-\Delta_{\mathring g}})(\mathring x,\alpha(\mathring y)).
\end{equation}
Here, $(\mathbb{R}^n,\mathring g)$ is the universal cover of 
$(M,g)$, with $\mathring g$ now being the Riemannian metric
on $\mathbb{R}^n$ obtained by pulling back the metric $g$
via the covering map.   Also, $\Gamma$ denotes the group of deck transformations, and $\mathring x, \mathring y \in D$ with $D\simeq M$ being a Dirichlet fundamental domain.

Notice that by the finite speed of propagation,  $\cos (t\sqrt{-\Delta_{\mathring g}})(\mathring x,\alpha(\mathring y))=0$ if $d_{\mathring g}(\mathring x,\alpha(\mathring y))>|t|$.
Thus, for each fixed $\mathring x$, since $(M,g)$ is of bounded geometry,  the number of deck transformations $\alpha$ such that $d_{\mathring g}(\mathring x,\alpha(\mathring y))\lesssim c_0\log\la$ is $O(\la^{C_Mc_0})$.

It suffices to show that for each fixed $\alpha$, we have
\begin{equation}\label{20a}
\int_0^\infty \gamma(2^{-j}t)e^{-it\la-t/\log\la}   (A^*\circ \cos (t\sqrt{-\Delta_{\mathring g}}))(\mathring x,\alpha(\mathring y))\chi_1(\mathring y)\,dt \lesssim_N \la^{-N}.
\end{equation}
Here, we slightly abuse the notation by identifying $\chi_1$ with a compactly supported function on the fundamental domain. We apply the Hadamard parametrix and use \eqref{6a} to prove \eqref{20a}. 

We may
use the Hadamard parametrix to express $\cos t\sqrt{-\Delta_{\mathring g}}$  in normal coordinates around some $\mathring x_0$
  as follows:
\begin{equation}\label{ak13}
\cos t\sqrt{-\Delta_{\mathring g}}(\mathring x,
 \mathring y)=
\sum_{\nu=0}^N
w_\nu(\mathring x,\mathring y)W_\nu(t, \mathring x, \mathring y)
+R_N(t, \mathring x, \mathring y),
\end{equation}
where $w_\nu \in C^\infty({\mathbb R}^{n}
\times {\mathbb R}^{n})$ for every $\nu$ and
\begin{equation}\label{k14}
W_0(t, \mathring x, \mathring y)=(2\pi)^{-n}
\int_{{\mathbb R}^{n}} e^{id_{\mathring g}( \mathring x,
  \mathring y)\xi_1} \cos t|\xi| \, d\xi,
\end{equation}
while for $\nu=1,2,\dots$, $W_\nu(t, \mathring x, \mathring y)$
is a finite linear combination of Fourier integrals 
of the form
\begin{equation}\label{k15}
\int_{{\mathbb R}^{n}}
e^{id_{\mathring g}( \mathring x, \mathring y)\xi_1}
e^{\pm it|\xi|}
\alpha_\nu(|\xi|)\, d\xi,
\, \, \text{with } \, \,
\alpha_\nu(\tau)=0, \, \text{for } \, \tau\le 1
\, \, \text{and } \, \, \partial^j_\tau \alpha_\nu(\tau)
\lesssim \tau^{-\nu-j}.
\end{equation}
In addition, for a given $N_0$ and a large enough $N$, we have
\begin{equation}\label{k16}
|\partial_t^j R_N(t, \mathring x, \mathring y)|\le C\exp(Ct),
\quad
0\le j\le N_0,
\end{equation}
for a fixed constant $C$. 
Finally, the coefficients 
$w_\nu( x, y)$ satisfy
\begin{equation}\label{k17}
0<w_0(\mathring x,\mathring y)\le 1,
\end{equation}
and
\begin{equation}\label{k19}
|\partial^\beta_x w_\nu( \mathring x, \mathring y)|
\le C\exp(Cr), \, \,
|\beta|, \nu\le N_0, \quad r=d_{\mathring g}( \mathring x,
 \mathring y),
\end{equation}
for some uniform constant $C$ depending only 
$ g$ and $N_0$. By the G\"unther comparison theorem, since the curvature of $M$ is bounded below by some negative constant, we also have the following bound for the distance function
\begin{equation}\label{k19a}
|\partial^\beta_{\mathring x,\mathring y} d_{\mathring g}( \mathring x,
 \mathring y)|
\le C\exp(Cr), \, \,\, \,\, \,\text{ for }
|\beta|\le N_0 \text{ and }  r=d_{\mathring g}( \mathring x, \mathring y)\geq1.
\end{equation}
We refer \cite[\S 1.1, \S 3.6]{SoggeHangzhou} for the above facts.

We focus on the $\nu=0$ term. The higher order terms can be treated similarly and satisfy better bounds. The error term involving $R_N$ satisfies the desired bound by using  \eqref{k16} and an integration by parts argument in the $t$ variable.

We use the explicit compactly supported amplitudes of $A_\pm$ in \eqref{10}. The lower-order remainder in the decomposition \eqref{eq:lem2.8} was already estimated in \eqref{n12b}.

Let $$H_p=\frac{\partial p}{\partial \xi}\frac{\partial }{\partial x}-\frac{\partial p}{\partial x}\frac{\partial }{\partial \xi}$$ denote the Hamilton vector field associated with the principal symbol $p(x,\xi)$ of $\sqrt{-\Delta_g}$.
Let $\Phi_t=e^{tH_p}: {T^*M\setminus \{0\}\to T^*M\setminus \{0\}}$ denote the geodesic flow on the cotangent bundle generated by $H_p$.

The kernel of the $\nu=0$ term in \eqref{20a} is 
\begin{equation}\label{mk}
     \begin{aligned}
        K(\mathring x,\alpha(\mathring y))=(2\pi)^{-2n}\la^{2n}&\iiiint \gamma(2^{-j}t)e^{-it\la-t/\log\la} e^{i\la\langle \mathring x-\mathring z,\xi \rangle} \phi_+(\mathring z,\xi/|\xi|_g)\beta(p(\mathring z,\xi))\\        &\,\,\cdot\tilde\chi_1(\mathring x)
        \cdot\psi(\mathring z)\chi_1(\mathring y)w_0(\alpha(\mathring y),\mathring z) e^{-i\la d_{\mathring g}(\alpha(\mathring y), \mathring z)\eta_1} \cos (t\la|\eta|) d\mathring zd\xi d\eta dt    \end{aligned}\end{equation}
 Up to a factor $1/2$, we can replace $\cos(t\la|\eta|)$ by $e^{it\la|\eta|}$ since the term involving $e^{-it\la|\eta|}$ is rapidly decreasing through integration by part in the $t$-variable. A similar integration by parts argument in $\eta, \mathring z$ variables also shows that we may assume  $d_{\mathring g}(\mathring z, \alpha(\mathring y))\ge 4C$ and $\eta_1\in [1/4,4]$.

 We claim that by choosing $c_0$ small enough, we have   $$\textnormal{dist}\left((\mathring z, -\eta_1\nabla_{\mathring z}d_{\mathring g}(\mathring z,\alpha(\mathring y))), \textnormal{supp} \phi_+(\mathring z,\xi/|\xi|_g)\right)\ge \la^{-\frac14}$$ This implies that the kernel in \eqref{mk} is $O(\la^{-N})$ by integration by parts in $\mathring z$ or $\xi$. We shall prove this by contradiction. Assume we have 
 \begin{equation}\label{s1}
     \textnormal{dist}\left((\mathring z,-\eta_1\nabla_{\mathring z}d_{\mathring g}(\mathring z,
 \alpha(\mathring y))), \textnormal{supp} \phi_+(\mathring z,\xi/|\xi|_g)\right)\le \la^{-\frac14}. \end{equation}
 Recall that the distance function satisfies 
 \begin{equation}\label{k9a}
 e^{-tH_p}(\mathring z,\nabla_{\mathring z}d_{\mathring g}(\mathring z,
 \alpha(\mathring y)))=(\alpha(\mathring y),\xi_0),
 \end{equation}
 for some $\xi_0$ and $t=d_{\mathring g}(\mathring z,\alpha(\mathring y))$. After reversing and scaling the covector, the endpoint covector is $-\eta_1\xi_0$. Since the Hamilton flow $\Phi_{t}$ is an isomorphism and satisfies $\Phi_{t+s}=\Phi_{t}\circ \Phi_{s}$, and because  $(\mathbb{R}^n,\mathring g)$ has bounded geometry,  \eqref{s1}, \eqref{k9a} along with the fact that $\eta_1\in [1/4,4]$ imply that there  exist a point $(z,\xi)\in \textnormal{supp} \phi_+(z,\xi/|\xi|_g)$ such that  \begin{equation}\label{s2}     \textnormal{dist}\left(e^{tH_p}(z,\xi),\, (\alpha(\mathring y),-\eta_1\xi_0)\right)\le \la^{-\frac14}\la^{Kc_0}. \end{equation}for some $t\ge C$ and fixed constant $K$. Since we are assuming $\mathring y\in \textnormal{supp} \chi_1$, by choosing $c_0$ sufficiently small, this contradicts \eqref{6a} after projecting $x(t)$ and $\alpha(\mathring y)$ back to $M$.
This finishes the proof of \eqref{01} if $A=A_+$. 

If  $A=A_-$, we can use a similar argument to show that 
\begin{equation}\label{01a}
    \left\|\chi_1\left(\Delta_g+(\la-i(\log\la)^{-1})^2\right)^{-1}\circ A\right\|_{L^2(M)\to L^q(M)}\lesssim \la^{\mu(q)-1},
\end{equation}
as well as 
\begin{align}\label{b-}
\left\|\chi_1\left(\left(\Delta_g+(\la+i(\log\la)^{-1})^2\right)^{-1}-\left(\Delta_g+(\la-i(\log\la)^{-1})^2\right)^{-1}\right)\circ A\right\|_{L^2(M)\to L^q(M)}
\lesssim \la^{\mu(q)-1}.
\end{align}
These two inequalities yield \eqref{01} with $A=A_-$.

By repeating the above arguments, \eqref{01a} is a consequence of 
\begin{equation}\label{20b}
\int_0^\infty \gamma(2^{-j}t)e^{it\la-t/\log\la}   (A^*\circ \cos (t\sqrt{-\Delta_{\mathring g}}))(\mathring x,\alpha(\mathring y))\chi_1(\mathring y)\,dt \lesssim_N \la^{-N}.
\end{equation}

On the other hand, put
\[
R_\pm=(\Delta_g+(\la\pm i(\log\la)^{-1})^2)^{-1},\qquad
E_{\la,k}=\mathbf{1}_{[\la+k/L,\la+(k+1)/L)}(P),
\quad L=\log\la.
\]
For frequencies in $[\la/2,2\la]$, the multiplier of $E_{\la,k}(R_+-R_-)$ is $O(\la^{-1}L(1+|k|)^{-2})$. We keep $A$ on the right. Choose $\psi_1\in C_0^\infty(M_\infty)$ equal to one on a neighborhood of the support of the spatial cutoff $\psi$ in \eqref{10}. Then $A=\psi_1A$. By Lemma~\ref{loc} and \eqref{bbound},
\[
\|E_{\la,k}Ah\|_2\leq\|E_{\la,k}\psi_1\|_{2\to2}\|Ah\|_2
\lesssim L^{-1/2}\|h\|_2.
\]
Applying Lemma~\ref{bgsp} to each spectral window before summing, we obtain
\begin{align*}
&\|\chi_1\mathbf{1}_{[\la/2,2\la]}(P)(R_+-R_-)Ah\|_{L^q(M)}\\
&\quad\lesssim\la^{\mu(q)}L^{-1/2}
\sum_{|k|\lesssim\la L}\la^{-1}L(1+|k|)^{-2}\|E_{\la,k}Ah\|_2\\
&\quad\lesssim\la^{\mu(q)-1}\sum_k(1+|k|)^{-2}\|h\|_2
\lesssim\la^{\mu(q)-1}\|h\|_2.
\end{align*}
The frequencies outside $[\la/2,2\la]$ are handled by the two-dimensional Sobolev estimate as in \eqref{p2.5}. This proves \eqref{b-}.
\end{proof}

\subsection{Estimate on support of $\chi_\infty$}

To establish \eqref{3.4}, we state the following estimate.

\begin{proposition}\label{new}
Let $M$ be a surface with asymptotically Euclidean ends as in \eqref{eq:aemetric}, and assume $\la\gg1$. There exists an $\varepsilon>0$ such that, for $6\leq q<\infty$ and $\mu(q)$ as in \eqref{ii.2}, if
\[
\operatorname{supp}\chi_\infty
\subset\{r>\varepsilon^{-1}\},
\]
then
\begin{align}
\|\chi_\infty dE_P(\la)\chi_\infty\|_{L^{q'}(M)\to L^q(M)}
\lesssim \la^{2\mu(q)}.
\end{align}
\end{proposition}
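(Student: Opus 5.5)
Since $\chi_\infty$ is supported far out on the asymptotically Euclidean ends, where there is no trapping and the metric is a small perturbation of a Euclidean cone, the plan is to follow the high-energy restriction argument of Guillarmou--Hassell--Sikora \cite[Section 8]{Guillarmou2010RestrictionAS}, in the abstract microlocal Stein--Tomas form completed by Chen \cite{xichen}, and to check that it survives in dimension $n=2$. The target bound $\la^{2\mu(q)}=\la^{n(1-2/q)-1}$ (with $n=2$, $q\ge 6$) is exactly the Stein--Tomas scaling of the spectral measure.

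The first step is to cover $\operatorname{supp}\chi_\infty\times S^*M$ by a microlocal partition of unity $\chi_\infty\,\mathrm{Id}=\sum_{i}Q_i(\la)+R(\la)$. Here each $Q_i$ is a semiclassical pseudodifferential operator (with $h=\la^{-1}$), microsupported in a small set, and $R$ is $O(\la^{-\infty})$. Because $\operatorname{supp}\chi_\infty\subset\{r>\varepsilon^{-1}\}$ and the metric there is $dr^2+r^2h(r^{-1})$ with $h$ close to $h(0)$, the geodesics through these microsupports are close to straight lines on the cone. By the convexity \eqref{assumption3}, each geodesic leaves to infinity in at least one direction and never returns.

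The second step uses the identity $dE_P(\la)=\frac{1}{2\pi i}\bigl(R(\la+i0)-R(\la-i0)\bigr)$ and writes
\[
Q_i\,dE_P(\la)\,Q_j^*=\frac{1}{2\pi}\,Q_i\bigl(R(\la+i0)-R(\la-i0)\bigr)Q_j^*.
\]
Following GHS, I would decompose the right-hand side according to whether the microsupport of $Q_j$ flows into that of $Q_i$ under the forward flow, under the backward flow, or not at all. In the last case I would choose the $\pm$ resolvent so that the term is $O(\la^{-\infty})$, using the outgoing/incoming propagation estimates available here: nontrapping of the far region, plus the gluing resolvent bound \eqref{eq:res-log-M} to control trajectories that pass near the core, which in fact do not return into $\{r>\varepsilon^{-1}\}$ after escaping.

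For the geometrically related pairs, I would build a Lagrangian/FIO parametrix for the outgoing resolvent on the far region, valid for times up to escape. In the far region the Euclidean-like structure gives the kernel bound
\[
\bigl|Q_i\,dE_P(\la)\,Q_j^*(x,y)\bigr|\lesssim \la^{\frac{n-1}{2}}\bigl(\la^{-1}+d(x,y)\bigr)^{-\frac{n-1}{2}},
\]
with an oscillatory phase $e^{\pm i\la d(x,y)}$ and symbolic amplitude. Then Chen's abstract Stein--Tomas theorem (a $TT^*$ argument with a dyadic decomposition in $d(x,y)$, combining the $L^1\to L^\infty$ dispersive bound with the $L^2$ bound from $\|Q_i\,dE\,Q_i^*\|$) yields the $L^{q'}\to L^q$ bound $\la^{n(1/q'-1/q)-1}$ for $q\ge 2(n+1)/(n-1)=6$, uniformly in $i,j$. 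The number of pieces is finite once $\varepsilon$ is fixed, and summing them gives the proposition.

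I expect the main obstacle to be the dimension-two version of the GHS parametrix, as promised in Subsection~3.3. GHS assume $n\ge 3$ mainly for low-energy reasons, so at high energy one must check that the microlocal kernel estimates, in particular the uniformity of the amplitude over the long distances $d(x,y)\sim r$ on the far end, hold for $n=2$. I would verify this by rescaling $r\mapsto\varepsilon^{-1}r$, which makes the metric an $O(\varepsilon)$ perturbation of the exact cone. Then I would treat the $\la d\gg1$ regime by stationary phase, with bounds uniform in $\varepsilon$, and this is precisely where the smallness of $\varepsilon$ is chosen.
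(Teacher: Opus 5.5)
There is a genuine gap in your second step, exactly where trapping enters. Take a bicharacteristic starting in $\operatorname{supp}\chi_\infty$ with $\dot r<0$. It enters $M_0$. If it starts near $\Gamma_-$, it follows $\Gamma_-$ into an arbitrarily small neighborhood of $K$, stays there for an arbitrarily long time, and then leaves along $\Gamma_+$ back into $\{r>\varepsilon^{-1}\}$. So your claim that trajectories passing near the core do not return to the far region is false. Convexity only forbids return once a trajectory is already moving outward in the end.

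Consider pairs with $Q_j$ incoming and $Q_i$ outgoing that are linked through the core. These are neither $O(\la^{-\infty})$ nor reachable by a ``far-region parametrix valid up to escape'':
\begin{itemize}
\item the escape time is unbounded;
\item the relation passes through the trapped set;
\item a Lagrangian description of $Q_i\,dE_P(\la)\,Q_j^*$ would require summing over infinitely many geodesics winding around $K$. That is the kind of control (a pressure-type condition) which is not available here.
\end{itemize}
The bound \eqref{eq:res-log-M} cannot fill this gap. It is an $L^2$ estimate with a logarithmic loss and gives no kernel information. It was also derived using negative curvature near the trapped set, which Proposition~\ref{new} does not assume. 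Separately, in dimension two Chen's theorem also needs the $\partial_\sigma$ ($j=1$) microlocal estimate, not only the kernel bound you state.

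The paper never needs kernel information about the spectral measure of $M$. Following \cite[Proposition 8.2]{Guillarmou2010RestrictionAS}, all $L^{q'}\to L^q$ information is obtained on an auxiliary \emph{nontrapping} surface that coincides with $M$ far out. The high-energy microlocal bounds of \cite[Section 7D]{Guillarmou2010RestrictionAS} for $j=0,1$, together with \cite[Theorem 1]{xichen}, give Stein--Tomas there. Factorization of the spectral measure and a $TT^*$ argument then give \cite[(8.12)]{Guillarmou2010RestrictionAS}. Objects on $M$ itself enter only through a gluing identity in which they are localized to the far region. There the Cardoso--Vodev estimate \cite[Theorem 1.1]{Cardoso2002UniformEO} gives the nontrapping $L^2$ bound $O(\la^{-1})$, whatever the dynamics in the core; this is where the smallness of $\varepsilon$ enters. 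The Calder\'on--Zygmund estimate of \cite[Lemma 8.3]{Guillarmou2010RestrictionAS} supplies the last ingredient, which is why $q<\infty$.

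If you want to keep a direct microlocal route, you would at minimum have to restrict to the diagonal pieces $Q_i\,dE_P(\la)\,Q_i^*$, which is all Chen's theorem uses. You would then have to prove, by propagation through the trapped set under a polynomial resolvent bound, that the core contributes only $O(\la^{-\infty})$ to these pieces, uniformly up to spatial infinity. None of this is in your proposal, and it would not cover the general interior dynamics allowed by the statement.
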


The corresponding estimate for $n\geq3$ follows from \cite[Proposition 8.2]{Guillarmou2010RestrictionAS}. We now explain why its high-energy proof also applies when $n=2$.

The proof of \cite[Proposition 8.2]{Guillarmou2010RestrictionAS}
uses three high-energy ingredients. The first is the cutoff resolvent
estimate \cite[(8.10)]{Guillarmou2010RestrictionAS}, which follows
from \cite[Theorem 1.1]{Cardoso2002UniformEO}. This estimate is valid
in dimension two.

The second ingredient is the nontrapping restriction estimate used
to obtain \cite[(8.12)]{Guillarmou2010RestrictionAS}. For high
energy, the microlocal spectral measure estimates needed for this
restriction estimate are proved in
\cite[Section 7]{Guillarmou2010RestrictionAS}, more precisely in
Section 7D, where the estimates \cite[(1.9)]{Guillarmou2010RestrictionAS}
are established. Although the main theorem in
\cite{Guillarmou2010RestrictionAS} is stated for $n\geq3$, the
high-energy argument in Section 7D does not use the low-energy
construction. Moreover, when $n=2$, the even-dimensional
Stein--Tomas argument only requires the estimates in \cite[(1.9)]
{Guillarmou2010RestrictionAS} for derivatives of order $0$ and $1$,
and the same high-energy argument gives these estimates.
More precisely, on each auxiliary nontrapping surface, let $\Delta$ denote its Laplacian and $d$ its geodesic distance. The high-energy construction gives a finite microlocal partition $\sum_iQ_i(\la)=I$, with the number of terms and their $L^2$ norms uniformly bounded, such that
\[
\left|\left(Q_i(\la)\partial_\sigma^j dE_{\sqrt{-\Delta}}(\sigma)Q_i(\la)^*\right)(x,y)\right|
\lesssim\sigma^{1-j}(1+\sigma d(x,y))^{-1/2+j},\qquad j=0,1,
\]
uniformly for $\sigma/\la\in(1-\eta,1+\eta)$ and $\la\gg1$, where $\eta>0$ is fixed and small. The microlocal supports are chosen with fixed margins, so $Q_i(\la)$ is held fixed while $\sigma$ varies; the derivatives act only on the spectral measure.

We may therefore apply \cite[Theorem 1]{xichen}, which gives the
Stein--Tomas restriction estimate from these microlocal spectral
measure estimates. Thus, on each auxiliary nontrapping
asymptotically Euclidean surface appearing in the proof of
\cite[Proposition 8.2]{Guillarmou2010RestrictionAS}, with $\Delta$
denoting its Laplacian,
\begin{align}
\|dE_{\sqrt{-\Delta}}(\la)\|_{L^{q'}\to L^q}
\lesssim
\la^{1-\frac4q}
=
\la^{2\mu(q)},
\qquad 6\leq q<\infty.
\end{align}
Using the factorization of the spectral measure and the $TT^*$
argument as in
\cite[Section 8]{Guillarmou2010RestrictionAS}, this gives
\cite[(8.12)]{Guillarmou2010RestrictionAS} in dimension two.

The third ingredient is
\cite[Lemma 8.3]{Guillarmou2010RestrictionAS}. Its proof uses a
Calder\'on--Zygmund estimate and requires $q'>1$, which holds since
$q<\infty$. We may therefore follow the remainder of the proof of
\cite[Proposition 8.2]{Guillarmou2010RestrictionAS} to obtain
Proposition~\ref{new} when $n=2$.

Notice that the restriction $n\geq3$ in
\cite[Theorem 1.3]{Guillarmou2010RestrictionAS} is related to the
low-energy analysis. As explained in
\cite[Section 9]{Guillarmou2010RestrictionAS}, the difficulty in
dimension two comes from the construction at the zero face. This
issue does not occur here since we only consider $\la\gg1$.

We now prove \eqref{3.4}. Let
\[
m(\tau)
=
\rho((\la\delta)^{-1}(\tau^2-\la^2))
\beta(\tau/\la).
\]
Then
\[
\chi_\infty
\rho((\la\delta)^{-1}(-\Delta_g-\la^2))f_\la
=
\chi_\infty m(P)f.
\]
Let
\[
T=\chi_\infty m(P).
\]
By the spectral theorem,
\[
TT^*
=
\chi_\infty |m|^2(P)\chi_\infty
=
\int_0^\infty
|m(\tau)|^2
\chi_\infty dE_P(\tau)\chi_\infty\,d\tau.
\]
Therefore, by Proposition~\ref{new},
\begin{align*}
\|T\|_{L^2(M)\to L^q(M)}^2
&=
\|TT^*\|_{L^{q'}(M)\to L^q(M)}\\
&\lesssim
\int_0^\infty
|m(\tau)|^2\tau^{2\mu(q)}\,d\tau\\
&\lesssim
\la^{2\mu(q)}
\int_0^\infty
\left|
\rho((\la\delta)^{-1}(\tau^2-\la^2))
\right|^2
|\beta(\tau/\la)|^2\,d\tau.
\end{align*}
Since $\tau\sim\la$ on the support of $\beta(\tau/\la)$, the change
of variables
\[
s=(\la\delta)^{-1}(\tau^2-\la^2)
\]
gives
\[
d\tau=\frac{\la\delta}{2\tau}\,ds\lesssim\delta\,ds.
\]
Hence,
\[
\|\chi_\infty m(P)\|_{L^2(M)\to L^q(M)}
\lesssim
\la^{\mu(q)}\delta^{1/2},
\]
which proves \eqref{3.4} for $6\leq q<\infty$.

To pass from the smooth multiplier to the spectral projection, notice that for $\tau\in[\la,\la+\delta]$,
\[
0\leq\frac{\tau^2-\la^2}{\la\delta}\leq2+\frac{\delta}{\la}\leq3.
\]
The Littlewood--Paley partition and its support condition imply $\beta(1)=1$. Thus, our choice of $\rho$ gives $|m(\tau)|\geq c>0$ on this interval, uniformly for large $\la$ and $0<\delta\leq1$. Define
\[
b_{\la,\delta}(\tau)=
\begin{cases}
m(\tau)^{-1},&\tau\in[\la,\la+\delta],\\
0,&\text{otherwise}.
\end{cases}
\]
Then $\|b_{\la,\delta}(P)\|_{2\to2}\lesssim1$ and
\[
\mathbf{1}_{[\la,\la+\delta]}(P)=m(P)b_{\la,\delta}(P).
\]
Combining \eqref{3.3}, \eqref{3.5} and \eqref{3.4}, we obtain
\[
\|\mathbf{1}_{[\la,\la+\delta]}(P)f\|_{L^q(M)}
\lesssim\la^{\mu(q)}\delta^{1/2}\|f\|_{L^2(M)},
\qquad 6<q<\infty,\quad 0<\delta\leq(\log\la)^{-1}.
\]
For $(\log\la)^{-1}<\delta\leq1$, split the spectral window into disjoint intervals of length at most $(\log(2\la))^{-1}$. There are $O(\delta\log\la)$ such intervals. Applying the small-window estimate to each interval, then using the Cauchy--Schwarz inequality and $L^2$ orthogonality, gives the same bound with $\delta^{1/2}$. This proves Theorem~\ref{specthm}.

\subsection{Proof of Theorem~\ref{specthm-full}}

Choose $\chi_\infty$ as in Theorem~\ref{specthm-full} and put
\[
\chi_0=1-\chi_\infty,\qquad \chi_c=0.
\]
The negative curvature assumption holds on a neighborhood of the whole support of $\chi_0$, including the support of its derivatives. On each end, let $R$ be the outer endpoint of $\operatorname{supp}\chi_0$ in the radial variable. Since $\chi_\infty$ depends only on $r$ and is nondecreasing, we may choose
\[
R<R_0<R_1
\]
so that $[R,R_1]\times\mathbb S^1$ is still contained in this negative curvature neighborhood. We carry out the construction in Subsection 2.1 with $a=1$ for $r\leq R_0$ and $a=0$ for $r\geq R_1$, retaining $a'\leq0$ and $(a'')_+\leq C_a(1-a)$. Any fixed positive width $R_1-R_0$ is enough: we first choose $a$ and then take $c_1$ sufficiently large as above. These choices may depend on the ends, but a single $c_1$ can be chosen for all of them, independently of $\la$ and $\delta$.

Thus, $\tilde M$ has negative curvature and its metric agrees with $g$ on a neighborhood of $\operatorname{supp}\chi_0$. The metric transition is entirely inside the negative curvature region. For $r\geq R_1$, we have $\tilde f=\sinh(c_1r)$, so no curvature sign of the original metric is used there. The background is even asymptotically hyperbolic after the same fixed rescaling as in Subsection 2.1. We choose $\chi_1=1$ on a neighborhood of $\operatorname{supp}\chi_0$ as in Subsection 3.1. The commutator $[\Delta_g,\chi_0]$ is supported in the ends, away from the trapped sets of both surfaces, by \eqref{assumption3} and the radial convexity of the background metric.

The proofs of \eqref{eq:local-sm-log-M} and \eqref{eq:comm-sm} apply to this background and these cutoffs. They use the negative curvature of the background and the convexity of the original ends, not a curvature sign on the remaining part of $M$. Thus, all the inputs on the original surface in Subsection 3.1 remain available.

First fix $6\leq q<\infty$ and $0<\delta\leq(\log\la)^{-1}$. Lemma~\ref{hyperbolicspec} and Proposition~\ref{keyb} include $q=6$; these are \cite[Theorem 1.2 and Proposition 2.8]{HSTZ} on the negatively curved background. In particular, if $L=\log\la$ and $H_\la=\Delta_{\tilde g}+\la^2+i\la/L$, the two resolvent bounds used at the endpoint are
\[
\|H_\la^{-1}\|_{L^2(\tilde M)\to L^6(\tilde M)}
\lesssim\la^{-5/6}L^{1/2},\qquad
\|\chi_1H_\la^{-1}\tilde\chi_1\|_{L^2(\tilde M)\to L^6(\tilde M)}
\lesssim\la^{-5/6},
\]
where $\tilde\chi_1=1$ near the commutator support and is supported away from the background trapped set. Consequently, the same four estimates in \eqref{3.14q} give
\[
\|(1-\chi_\infty)m(P)f\|_{L^q(M)}
\lesssim\la^{\mu(q)}\delta^{1/2}\|f\|_{L^2(M)},
\qquad 6\leq q<\infty,
\]
with $m$ as in Subsection 3.3. Proposition~\ref{new} and the $TT^*$ argument there also include $q=6$, and give the same bound for $\chi_\infty m(P)f$. No curvature sign is used in this exterior estimate. There is no $\chi_c$ term, so neither Lemma~\ref{bgsp} nor Proposition~\ref{keyeuc} is needed here.

Adding the two parts and using the same bounded multiplier $b_{\la,\delta}(P)$ as in Subsection 3.3 gives
\begin{equation}\label{eq:full-supercritical}
\|\mathbf{1}_{[\la,\la+\delta]}(P)f\|_{L^q(M)}
\lesssim\la^{\mu(q)}\delta^{1/2}\|f\|_{L^2(M)},
\qquad 6\leq q<\infty.
\end{equation}
As before, splitting a wider spectral window into disjoint smaller intervals and using $L^2$ orthogonality extends this estimate to $0<\delta\leq1$.

For $2<q<6$, we interpolate the estimate at $q=6$ with
\[
\|\mathbf{1}_{[\la,\la+\delta]}(P)\|_{L^2(M)\to L^2(M)}\leq1.
\]
Writing
\[
\frac1q=\frac{1-\theta}{2}+\frac\theta6,
\qquad \theta=3\left(\frac12-\frac1q\right)=6\mu(q),
\]
we obtain
\begin{align*}
\|\mathbf{1}_{[\la,\la+\delta]}(P)f\|_{L^q(M)}
&\lesssim\la^{\theta/6}\delta^{\theta/2}\|f\|_{L^2(M)}\\
&=\la^{\mu(q)}\delta^{3\mu(q)}\|f\|_{L^2(M)}\\
&\leq(\la\delta)^{\mu(q)}\|f\|_{L^2(M)},
\end{align*}
where we used $\delta\leq1$. This proves Theorem~\ref{specthm-full}.

\printbibliography

@article {SoggeHuangQuasimode2025,
    AUTHOR = {Huang, X. and Sogge, C. D.},
     TITLE = {Curvature and sharp growth rates of log-quasimodes on compact
              manifolds},
   JOURNAL = {Invent. Math.},
  FJOURNAL = {Inventiones Mathematicae},
    VOLUME = {239},
      YEAR = {2025},
    NUMBER = {3},
     PAGES = {947--1008},
      ISSN = {0020-9910,1432-1297},
   MRCLASS = {58J50 (35J05 35P15 35R01)},
  MRNUMBER = {4861130},
       DOI = {10.1007/s00222-025-01315-2},
}

@article {HSTZ,
    AUTHOR = {Huang, X. and Sogge, C. and Tao, Z. and Zhang, Z.},
     TITLE = {Lossless Strichartz and spectral projection estimates on unbounded manifolds},
   JOURNAL = {Geom. Funct. Anal.},
   volume={0 (0000) 1–93},
   year={2026}
}

@article{nontrappinglocalsmoothing,
     author = {Datchev, K. and Vasy, A.},
     title = {Propagation through trapped sets and semiclassical resolvent estimates},
     journal = {Annales de l'Institut Fourier},
     pages = {2347--2377},
     year = {2012},
     publisher = {Association des Annales de l'Institut Fourier},
     volume = {62},
     number = {6},
     doi = {10.5802/aif.2751},
     zbl = {1271.58014},
     mrnumber = {3060760}
}

@article{Cardoso2002UniformEO,
  title={Uniform Estimates of the Resolvent of the Laplace-Beltrami Operator on Infinite Volume Riemannian Manifolds. II},
  author={F. Cardoso and G. Vodev},
  journal={Annales Henri Poincar{\'e}},
  year={2002},
  volume={3},
  pages={673-691}
}

@misc{tao2026spectralgapsurfacesinfinite,
      title={Spectral gap for surfaces of infinite volume with negative curvature}, 
      author={Z. Tao},
      year={2026},
      eprint={2403.19550},
      archivePrefix={arXiv},
      primaryClass={math.SP}
}

@article{BGH,
author = {Burq, N. and Guillarmou, C. and Hassell, A.},
year = {2009},
month = {07},
pages = {},
title = {Strichartz Estimates Without Loss on Manifolds with Hyperbolic Trapped Geodesics},
volume = {20},
journal = {Geometric and Functional Analysis},
doi = {10.1007/s00039-010-0076-5}
}

@article{spectralbookdyatlov,
author = {S. Dyatlov and M. Zworski},
year = {2019},
title = {Mathematical theory of scattering resonances},
volume = {200},
journal = {American
Mathematical Soc.},
doi = {10.1007/s00039-010-0076-5}
}

@article{Guillarmou2010RestrictionAS,
  title={Restriction and spectral multiplier theorems on asymptotically conic manifolds},
  author={C. Guillarmou and A. Hassell and A. S. Sikora},
  journal={Analysis \& PDE},
  year={2010},
  volume={6},
  pages={893-950}
}

@article{Stein,
  title={Oscillatory integrals in Fourier analysis},
  author={E. M. Stein},
  journal={Ann. of Math. Stud., Beijing Lectures in Harmonic Analysis, Princeton Univ. Press, Princeton, NJ,},
  year={1986},
  volume={112},
  page={307–355}
}

@article{Thomas,
  title={A restriction theorem for the Fourier transforms},
  author={P. A. Tomas},
  journal={Bull. Amer. Math. Soc},
  year={1975}
}

@book{SoggeHangzhou,
    author = {Sogge, C.D.},
    title = {Hangzhou Lectures on Eigenfunctions of the Laplacian},
    publisher = {Annals of Mathe-
matics Studies., vol. 188. Princeton University Press, Princeton},
    year = {2014}
}

@article{HormanderIII,
    author = {L. H\"ormander},
    title = {The analysis of linear partial differential operators. III. },
    journal = {Classics in Mathematics.
Springer, Berlin, Pseudo-differential operators},
    year = {2007}
}

@article{xichen,
    author = {Chen, X.},
    title = {Stein–Tomas restriction theorem via spectral measure on metric measure spaces.},
    journal = {Math. Z. 289, 829–835, doi.org/10.1007/s00209-017-1976-y},
    year = {2018}
}

@article{bouclet,
    author = {J.-M. Bouclet.},
    title = {Strichartz estimates on asymptotically hyperbolic manifolds.},
    journal = {Anal. PDE, 4(1):1–84},
    year = {2011}
}

@article{sogge88,
    author = {C.D. Sogge},
    title = {Concerning the $L^p$ norm of spectral clusters for second-order elliptic operators on compact manifolds.},
    journal = {Journal of Functional Analysis, Vol. 77, Issue 1, 123-138},
    year = {1988}
}

@article{DVgluing,
  author = {Datchev, Kiril and Vasy, Andr{\'a}s},
  title = {Gluing semiclassical resolvent estimates via propagation of singularities},
  journaltitle = {International Mathematics Research Notices},
  year = {2012}, number = {23}, pages = {5409--5443},
  eprint = {1008.3964}, eprinttype = {arxiv}, eprintclass = {math.AP}
}

@article{VasyZworski2000,
  author = {Vasy, Andr{\'a}s and Zworski, Maciej},
  title = {Semiclassical estimates in asymptotically {Euclidean} scattering},
  journaltitle = {Communications in Mathematical Physics},
  year = {2000}, volume = {212}, pages = {205--217},
  doi = {10.1007/s002200000207}
}

@online{WangAH,
  author = {Wang, Yiran},
  title = {Resolvent and Radiation Fields on Non-trapping Asymptotically Hyperbolic Manifolds},
  year = {2014}, eprint = {1410.6936}, eprinttype = {arxiv}, eprintclass = {math.AP}
}
\end{document}